\numberwithin{equation}{section}
\tikzstyle{ann} = [fill=white,inner sep=6pt]
\newcommand{\leftsquigarrow}{\mathrel{\rotatebox[origin=c]{-180}{$\rightsquigarrow$}}}
\newcommand{\longupdownarrow}{\mathrel{\rotatebox[origin=c]{90}{$\longleftrightarrow$}}}
\DeclareMathOperator{\spec}{Spec}
\newtheoremstyle{mystyle}%
{\item[\hskip \labelsep \theorem@headerfont ##1 \scshape{\bfseries{##2}}\theorem@separator]}
{\item[\hskip\labelsep \theorem@headerfont \scshape{\bfseries{##1}}  \scshape{\bfseries{##2}} \textbf{(##3)}\theorem@separator]}
\theoremstyle{mystyle}
\newtheorem{theorem}{Theorem}[section]
\newtheorem{mmtheorem}{Theorem}
\newtheorem{proposition}[theorem]{Proposition}
\newtheorem{lemma}[theorem]{Lemma}
\newtheorem{remark}[theorem]{Remark}
\newtheorem{example}[theorem]{Example}
\newtheorem{cor}[theorem]{Corollary}
\newtheorem{defn}[theorem]{Definition}
\newtheorem{convention}[theorem]{Convention}
\newtheorem{notation}[theorem]{Notations}
\newtheorem{claim}[theorem]{Claim}
\newenvironment{proof}
{\textsc{Proof:} }
\newcommand\Simp{\mathit{Simp}}
\newcommand\Sin{\mathit{Simp_n}}
\newcommand\Frob{\mathrm{Frob}}
\newcommand\Sym{\mathit{Sym}}
\newcommand\Br{\mathrm{Branch}}
\newcommand\Trace{\mathrm{Trace}}
\newcommand\Aut{\mathrm{Aut}}
\newcommand\M{\mathrm{M_n}}
\newcommand{\etale}{étale \,}
\newcommand\longmapsfrom{\mathrel{\reflectbox{\ensuremath{\longmapsto}}}}
\def\P{\mathcal{P}}
\def\A{\mathbb{A}}
\def\E{\mathcal{E}}
\def\Q{\mathbb{Q}}
\def\et{\acute{e}t}
\def\length{\mathit{length}}
\def\F{\mathbb{F}}
\def\X{\mathcal{X}}
\def\I{\mathcal{I}}
\def\S{\mathcal{S}}
\def\U{\mathcal{U}}
\def\Z{\mathcal{Z}}
\def\K{\mathit{K}}
\date{}
\def\question{\medbreak
	\global \advance \questionno 1
	\textbf{Question \the\questionno}.\enspace \ignorespaces}
\newcommand{\remove}[1]
\title{Cohomology of the space of polynomial maps on $\A^1$ with prescribed ramification}
\author{Oishee Banerjee}
\begin{document}
						
			\maketitle
		\begin{abstract}
		In this paper we study the moduli spaces $\Simp^m_n$ of degree $n+1$ morphisms $ \mathbb{A}^1_{\K} \to \mathbb{A}^1_{\K}$  with "ramification length $<m$" over an algebraically closed field $\K$. For each $m$, the moduli space $\Simp^m_n$ is a Zariski open subset of the space of degree $n+1$ polynomials over $\K$ up to $Aut (\A^1_{\K})$. It is, in a way, orthogonal to the many papers about polynomials with prescribed zeroes- here we are prescribing, instead, the ramification data. Exploiting the topological properties of the poset that encodes the ramification behaviour, we use a sheaf-theoretic argument to compute $H^*(\Simp^m_n(\mathbb{C});\mathbb{Q})$ as well as the \etale cohomology $H^*_{\et}({\Simp^m_n}_{/\K};\mathbb{Q}_{\ell})$ for $char\K=0$ or $char\K> n+1$. As a by-product we obtain that $H^*(\Simp^m_n(\mathbb{C}); \mathbb{Q})$ is independent of $n$, thus implying rational cohomological stability. When $char \K>0$ our methods compute $H^*_{\et}(\Simp^m_n;\mathbb{Q}_{\ell})$ provided $char\K>n+1$ and show that the \etale cohomology groups in positive characteristics do not stabilize.
	\end{abstract}

\section{Introduction}
We work throughout over an algebraically closed field $\K$. Let $$\M: = \{f: \A^1 \to \A^1: f \text{  is a morphism of degree } n+1\}/ \Aut(\A^1).$$ 

\noindent We identify $\M$ with the space of all degree $n+1$ monic polynomials over $\K$ that vanish at $0$.
There exists vast literature studying subvarieties of $\M$, e.g.  the space of square-free polynomials (i.e configuration spaces of distinct points), or the space of finite morphisms to $\A^1$ with a fixed a Galois group $G$ etc. In this paper we consider a natural but quite different problem by considering the subvarieties $\Simp^m_n \subset \M$ of morphisms with "total ramification $<m$".

\bigskip \noindent To be precise, let $$\mathcal{N} := \{\text{finite subsets of (not necessarily distinct) integers } \geq 2\}.$$
For $f \in \M$, let $v_f(a)$ denote the \textit{valuation of $f$ at $a$} (for a definition see Section 2 below). Let $$Ram(f) :=\{a\in\A^1: v_f(a)\geq 2\}$$ be the set of ramification points of $f$. If $a$ is a ramification point of $f$, define the \emph{ramification index of $f$ at $a$} to be the positive integer $v_f(a)$. Let $\Br (f) := f(Ram(f)) \subset \A^1$ be the set of \emph{branch points.} 
A branch point $b\in \A^1$ determines $B_b(f)\in \mathcal{N}$ via $$B_b(f):= \{\text{ramification indices of elements of }f^{-1}(b)\}.$$ Let $$l(B_b(f)) := \sum_{e\in B_b(f)}(e-1)$$ be the \emph{ramification length over $b$}". The \emph{total ramification length of $f$} is \begin{equation}\label{eq1.1}
\length(f):=\sum\limits_{b\in \Br (f)}\big( l(B_b(f))-1\big).
\end{equation}Let $$\Simp^m_n : =\{f\in\M: \length(f)<m\}.$$ This is a Zariski open dense subset of $\M$, and hence a smooth variety over $K$. In fact, as we shall soon see, $\Simp^m_n$ is the complement of a locus defined by polynomials with coefficients in $\mathbb{Z}$, and hence is a reduced separated scheme of finite type over $\mathbb{Z}$.
When $m=1$, we get the locus of \emph{simply-branched polynomials}, which we denote by $\Sin.$ These are the degree $n+1$ morphisms $f:\A^1\to \A^1$ with \emph{simple branch points.}
\begin{remark}
	Note that  $\{f\in\M: \length(f)=m\}$, the locally closed stratum of  polynomials with total ramification length $m$,  has codimension $m$ in $\M$ by the Riemann-Hurwitz formula. In other words, \eqref{eq1.1} is the bridge that relates the codimension of a stratum with the total ramification length of polynomials in that stratum, via the Riemann-Hurwitz formula. 
	
\hfill$\square$\end{remark}
		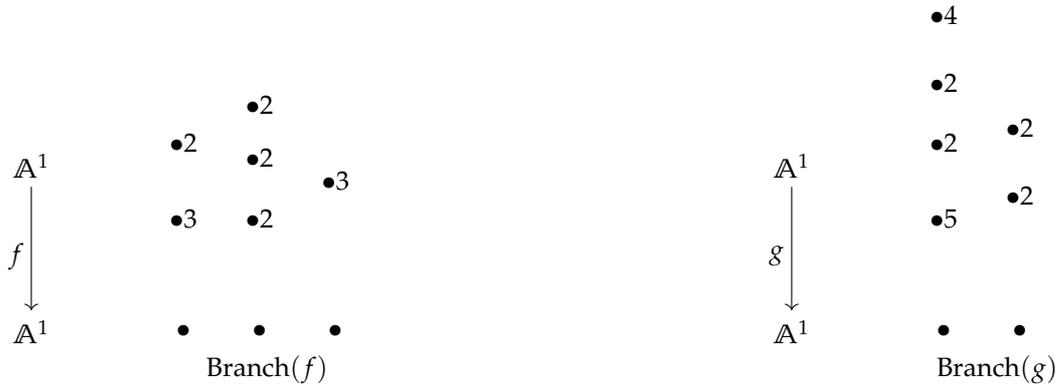
\begin{figure}[H]
	\begin{tikzpicture}
	\node at (0,0) {$\bullet 2$};               	\node at (10,0) {$\bullet 2$};
	\node (A1) at (-2,-0.3) {$\A^1$};            	\node (A3) at (8,-0.3) {$\A^1$};
	\node (A2) at (-2,-2.5) {$\A^1$};              \node (A4) at (8,-2.5) {$\A^1$};
                                                                   	\node at (7.8,-1.5) {$g$};
	\draw [->] (A1)-- (A2);                              \draw [->] (A3)-- (A4);
	\node at (0,-1) {$\bullet 3$};                    	\node at (10,-1) {$\bullet 5$};
	\node at (-2.2,-1.5) {$f$};                        \node at (10,-2.5) {$\bullet$};
                                                              	\node at (11,-2.5) {$\bullet$};
	\node at (1,0.5) {$\bullet 2$};                 	\node at (10,0.8) {$\bullet 2$};                         
	\node at (1,-0.2) {$\bullet 2$};                	\node at (10,1.7) {$\bullet 4$};    
	\node at (1,-1) {$\bullet 2$};
	\node at (0,-2.5) {$\bullet$};                       	\node at (11,-0.7) {$\bullet 2$}; 
	\node at (1,-2.5) {$\bullet$};                      	\node at (11,0.2) {$\bullet 2$}; 
	\node at (2,-2.5) {$\bullet$};
	\node at (2,-0.5) {$\bullet 3$};
		\node at (1.1,-3) {$\Br(f)$};
			\node at (10.7,-3) {$\Br(g)$};
	\end{tikzpicture}	\caption{A schematic of the ramification points (with indices specified) over the branch points of two morphisms $f, g \in \mathit{Simp^7_n}$ for a fixed $n\geq 13.$}
	\end{figure}
\noindent Let $\mathbf{p}(N)$ denotes the number of partitions of a positive integer $N$. Let $\mathbf{c}:\mathbb{Z}^{+}\to \mathbb{Z}^{+}$ be defined via \begin{align}
\mathbf{c}(m) =\mathlarger{\mathlarger{‎‎\sum}}\limits_{k\geq 1}\Bigg(\mathlarger{\mathlarger{‎‎\sum}}\limits_{\substack{n_1+\ldots +n_k =m\\ n_1\leq \ldots \leq n_k}} \mathbf{p}(n_1+1)\ldots \mathbf{p}(n_k+1)\Bigg).\label{c(m)}
	\end{align}By $H^i$ (respectively $H^i_{\et}$) we will mean singular (respectively \etale) cohomology. If $V$ is a $\mathbb{Q}_{\ell}$ vector space and if $m\in \mathbb{Z}$ then we let $V(m)$ denote the \emph{$m^{th}$ Tate twist of $V$}.
 Our main theorem computes the cohomology of $\Simp^m_n.$ \begin{mmtheorem}{\label{thmB}}
	Let $m,n \geq 1$. Then the following hold. \begin{enumerate}
		\item For all $n\geq 3m$: \[
		H^i(\Simp^m_n(\mathbb{C});\Q) = \Bigg\{\begin{array}{lr}
		\Q & \text{for } i=0, \\
		\Q^{\oplus \mathbf{c}(m)} & \text{for } i=m,\\
		0 & \text{otherwise.} 
		\end{array}	\]		
		\item  Let $\kappa$ be a field satisfying $char\, \kappa >n+1$ or $char\, \kappa =0$. Then for all $n\geq 3m$, we have the following isomorphism of $Gal({\overline{\kappa}/ {\kappa}})$-representations: \[
		H^i_{\et}({\Simp^m_n}_{/\overline{\kappa}};\Q_{\ell}) = \Bigg\{\begin{array}{lr}
		\Q_{\ell}(0) & \text{for } i=0, \\
		\Q_{\ell}(-m)^{\oplus \mathbf{c}(m)} & \text{for } i=m,\\
		0 & \text{otherwise,} 
		\end{array}	\]
		whenever $\ell$ is prime to $char \,\kappa$. 
	\end{enumerate} \hfill$\square$
\end{mmtheorem}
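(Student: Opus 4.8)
The plan is to present $\Simp^m_n$ as an open subvariety of the affine space $\M\cong\A^n$ of monic degree-$(n+1)$ polynomials vanishing at $0$ --- namely the complement of the closed ``degeneracy locus'' $Z:=\{f\in\M:\length(f)\ge m\}$ --- and to compute $H^*(\Simp^m_n)$ by a sheaf-theoretic analysis of the stratification of $\M$ by ramification data. Every $f$ carries a well-defined ramification datum, the multiset of branch-point profiles $\{B_b(f)\}_{b\in\Br(f)}$, and these combinatorial types are partially ordered by degeneration; this is the poset $P=P_n$ alluded to in the abstract. It induces a stratification $\M=\bigsqcup_{\tau\in P}S_\tau$ into locally closed subvarieties, with $\Simp^m_n=\bigsqcup_{\operatorname{cod}S_\tau<m}S_\tau$ and $Z=\bigsqcup_{\operatorname{cod}S_\tau\ge m}S_\tau$, the codimension of $S_\tau$ being read off from $\tau$ by Riemann--Hurwitz. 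Feeding this into the localization triangle $j_!\Q_{\Simp^m_n}\to\Q_{\M}\to i_*\Q_Z$ --- using $H^*_c(\A^n)=\Q[-2n]$, $H^*(\A^n)=\Q$, together with Poincar\'e duality on the smooth $\Simp^m_n$ --- or equivalently resolving $Z$ by a Vassiliev--Gorinov-type simplicial resolution built from chains in $P$, reduces the problem to: (a) the cohomology of each stratum $S_\tau$, and (b) how these assemble, i.e.\ the hypercohomology of a constructible complex governed by the order complexes of intervals of $P$.

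For (a): each $S_\tau$ is a finite \'etale cover of a coloured unordered configuration space $\operatorname{UConf}_{\vec a(\tau)}(\A^1)$ of the branch points, coloured by their profiles, the covering being controlled by the Hurwitz/monodromy data attached to $\tau$. Its cohomology is computed from the (rationally braid-group-controlled) cohomology of configuration spaces of $\A^1$ twisted by these Hurwitz local systems; the inputs one needs are that the relevant local systems split off their invariant parts, that the result is pure of the expected weight, and --- for the \'etale statement --- the ensuing Tate twist, which is what produces the $\Q_\ell(-m)$ in part (2). For (b): assembling these over $P$ gives a spectral sequence converging to $H^*(\Simp^m_n)$, and the heart of the matter is that it degenerates and collapses onto the two lines $i=0$ and $i=m$. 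This is exactly where the combinatorial topology of $P$ enters: shellability (Cohen--Macaulayness) of the order complexes of its intervals, or an explicit M\"obius-function computation, forces all differentials and all ``wrong-degree'' contributions to vanish, leaving $E_\infty$ concentrated in degrees $0$ and $m$. The surviving group in degree $m$ is then counted by $\mathbf c(m)$: the factor $\mathbf p(j+1)$ in \eqref{c(m)} is the number of profiles that contribute ramification length $j$, and the product structure reflects that degenerations located at distinct points of $\A^1$ contribute independently. Finally, $n$-independence for $n\ge 3m$ --- hence rational cohomological stability --- drops out because in that range every combinatorial type of codimension $<m$ is genuinely realised by a polynomial and the part of $P$ (with its strata cohomologies) contributing below codimension $m$ no longer depends on $n$.

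I expect the main obstacle to be precisely this collapse step: simultaneously controlling the Hurwitz monodromy on each stratum and proving that the poset spectral sequence carries no nonzero differentials and no surviving terms off $i=0,m$. The geometric identification of the strata and the duality bookkeeping are comparatively routine; it is the interplay between the representation theory of the covers and the order-complex topology of $P$ that has to be made to work, and it is also the step that must be rerun carefully in characteristic $p$.

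For part (2), the hypothesis $\operatorname{char}\kappa=0$ or $>n+1$ guarantees that $f\mapsto f'$ has degree $n$ with its ``expected'' behaviour and that there is no wild ramification, so the stratification by ramification data and all the Hurwitz-theoretic inputs transport verbatim to the $\ell$-adic setting. One then runs the same poset spectral sequence with $\Q_\ell$-coefficients, reads the weights off the (pure) cohomology of the configuration-space covers, and obtains the stated $\operatorname{Gal}(\overline\kappa/\kappa)$-equivariant answer with its Tate twist; its compatibility with the topological computation over $\C$ is then formal, via smooth base change and the Artin comparison theorem.
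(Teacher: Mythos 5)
Your high-level skeleton matches the paper's: stratify by ramification type, index by a poset, invoke shellability, and count via $\mathbf{c}(m)$. But there is a genuine gap in the middle that the paper closes with a specific trick you have not reproduced, and without it the hardest step of your plan — the one you yourself flag as the ``main obstacle'' — would remain a substantial open computation rather than a routine one.

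The paper does \emph{not} compute stratum cohomology as the cohomology of Hurwitz-type covers of coloured configuration spaces of branch points, and it never touches braid-group or Hurwitz-local-system calculations. Instead it first replaces each $f\in\M$ by its normalised derivative $\mathcal{D}(f)=f'/(n+1)\in M'_n\cong\A^n$ (legal since $\operatorname{char}\K=0$ or $>n+1$), and then passes to the ordered root cover $\X_n\cong\A^n$ of $M'_n$. The point is that the ramification points of $f$ are precisely the roots of $f'$, so the constraints ``two ramification points coincide'' and ``two ramification points are siblings'' become polynomial divisors $T_{ij}$ and $D_{ij}$ in the affine space $\X_n$. This converts the problem from one about covers of configuration spaces of branch points into one about intersections of explicit hypersurfaces in $\A^n$. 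Proposition~\ref{VBness} then shows that each closed stratum $X(\lambda)$, once one quotients by the $\mathfrak{S}_{|N(\lambda)|}$ acting on the ``simple part'', is literally an affine space $\A^{n-m}$ (for $\lambda$ of length $m$, under the admissibility bound $n\ge 3m$). Consequently the $E_1$-terms $H^q(\X_n,\X_n-X(\lambda))^{\mathfrak{S}_n}$ are computed by a single Gysin/Thom isomorphism for a smooth linear $\A^{n-m}\hookrightarrow\A^n$; there is no monodromy, no local system, no purity-of-weights argument to check. Your proposal, by staying in $\M$ and parametrising strata via branch points, lands exactly on the technically hard path that this derivative-plus-root-cover manoeuvre is designed to avoid.

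A second, smaller discrepancy: you attribute the collapse of the spectral sequence onto degrees $0$ and $m$ to shellability (Cohen--Macaulayness) of $\P^m_n$ alone, or to a M\"obius computation. In the paper shellability plays a more limited role — via Corollary~\ref{semimodular} it guarantees $\widetilde H^i(\widehat 0,\lambda)=0$ for $i<l^m(\lambda)-2$, which is what makes the Petersen-style resolution \eqref{resol} have terms concentrated in the expected degrees. The actual collapse of the spectral sequence comes from a separate \emph{equivariant} statement, equation \eqref{length2vanish}: $\bigl(\widetilde H_{p-2}(\widehat 0,\lambda)\bigr)^{\mathfrak{S}_n}=0$ whenever $l^m(\lambda)\ge 2$, proved by projecting $\P_n\subset\Pi_n\times\Pi_n$ to the second factor and invoking the well-known vanishing of $\mathfrak{S}_n$-invariants in the Whitney homology of the partition lattice $\Pi_n$. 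That is the mechanism which kills every column with $p\ge 2$ after taking $\mathfrak{S}_n$-invariants of $E_1$; shellability by itself does not do this. You should separate these two ingredients: shellability controls \emph{where} the interval cohomology can live, while the partition-lattice $\mathfrak{S}_n$-invariance vanishing controls \emph{whether it survives} the quotient.

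In short: the stratification, the poset $\P^m_n$, the shellability input, the admissibility bound $n\ge 3m$, and the count $\mathbf c(m)$ are all correctly identified. What you are missing is the pass-to-derivative-and-root-cover step that replaces Hurwitz/monodromy bookkeeping with Gysin isomorphisms for linear subspaces of $\A^n$, together with the $\Pi_n$-projection argument that is what actually produces the collapse after $\mathfrak{S}_n$-invariants. As written, your part (a) — the claim that the strata have cohomology ``computed from the (rationally braid-group-controlled) cohomology of configuration spaces of $\A^1$ twisted by these Hurwitz local systems'' with everything pure and split — is asserted, not established, and it is precisely the content that the paper's alternative parametrisation renders unnecessary.
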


\bigskip	\noindent \textbf{An arithmetic application.} Theorem \ref{thmB} paired with the Grothendieck-Lefschetz fixed point theorem gives us the following:
\begin{cor}{\label{corB}}
Let $m,n\geq 1$ and let $q =p^d$, where $p$ is a prime and $d\geq 1$. Then $$\# \Simp^m_n(\F_q) = q^n -\mathbf{c}(m)q^{n-m}$$ for all $n<p-1$ and $m\leq \frac{n}{3}$.

 \hfill$\square$
\end{cor}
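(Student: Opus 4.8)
\medskip
\noindent\textbf{Proof sketch for Corollary~\ref{corB}.}\ The plan is to feed Theorem~\ref{thmB}(2) into the Grothendieck--Lefschetz trace formula; the one point of substance is that Theorem~\ref{thmB} computes ordinary \etale cohomology whereas the trace formula sees compactly supported cohomology, so Poincaré duality is needed to pass between them. First I would match up the hypotheses: the bound $n<p-1$ says precisely that $char\,\F_q = p > n+1$, and $m\le n/3$ is the stability bound $n\ge 3m$, so, taking $\kappa=\F_q$, Theorem~\ref{thmB}(2) is available. I would also record that when $p>n+1$ the ramification of any degree $n+1$ polynomial is tame (each ramification index is at most $n+1<p$), so the locus $\length(f)<m$ still cuts out a dense Zariski open subscheme of $\M\otimes\F_q\cong\A^n_{\F_q}$; hence $X:=\Simp^m_n\otimes\F_q$ is a smooth $\F_q$-variety of pure dimension $n$. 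Beyond its role inside Theorem~\ref{thmB}, this is the only place the bound $p>n+1$ is used.

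Since $X$ is smooth of pure dimension $n$, Poincaré duality gives $Gal(\overline{\F_q}/\F_q)$-equivariant isomorphisms
\[
H^i_c\bigl(X_{/\overline{\F_q}};\Q_\ell\bigr)\ \cong\ H^{\,2n-i}_{\et}\bigl(X_{/\overline{\F_q}};\Q_\ell\bigr)^{\vee}(-n).
\]
Plugging in Theorem~\ref{thmB}(2), the right-hand side vanishes except for $i\in\{2n,\,2n-m\}$, where
\[
H^{2n}_c\bigl(X_{/\overline{\F_q}};\Q_\ell\bigr)\cong\Q_\ell(-n),\qquad
H^{\,2n-m}_c\bigl(X_{/\overline{\F_q}};\Q_\ell\bigr)\cong\Q_\ell(m-n)^{\oplus\mathbf c(m)}.
\]
Then I would apply the Grothendieck--Lefschetz fixed point theorem,
\[
\#X(\F_q)=\sum_{i\ge 0}(-1)^i\,\Trace\bigl(\Frob_q\mid H^i_c(X_{/\overline{\F_q}};\Q_\ell)\bigr),
\]
using that geometric Frobenius acts on $\Q_\ell(-j)$ as multiplication by $q^{\,j}$. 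Only the two surviving degrees contribute, and the alternating sum collapses to the asserted count $\#\Simp^m_n(\F_q)=q^n-\mathbf c(m)\,q^{\,n-m}$; the term $q^{\,n-m}$ with multiplicity $\mathbf c(m)$ is exactly the Poincaré dual of the degree-$m$ class produced by Theorem~\ref{thmB}.

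There is no real obstacle here: the corollary is essentially a formal consequence of Theorem~\ref{thmB}, the remaining work being the standard passage from the ordinary cohomology computed there to compactly supported cohomology via Poincaré duality, followed by the trace formula. Accordingly, the only steps that genuinely require attention are (i) confirming that $\Simp^m_n\otimes\F_q$ really is smooth of dimension $n$ over $\F_q$, i.e.\ that reduction modulo $p$ does not collapse the complement of $\Simp^m_n$ inside $\A^n_{\F_q}$ — this is exactly where the bound $p>n+1$ enters — and (ii) bookkeeping the Tate twists through Poincaré duality so that the exponents of $q$ come out as $n$ and $n-m$ rather than, say, $2n$ and $2n-m$.
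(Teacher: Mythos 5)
Your approach is the same as the paper's: the paper's formula~\eqref{6.10} is precisely the trace formula for $H^\bullet_c$ rewritten for a smooth $n$-dimensional variety via Poincar\'e duality (the prefactor $q^n$ is the Tate twist $(-n)$), and you have simply made that passage explicit; your observations about tame ramification for $p>n+1$ and about verifying smoothness of the reduction mod $p$ are correct and worth recording, since neither appears explicitly in the paper.

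That said, the assertion that ``the alternating sum collapses to the asserted count'' conceals a sign you should not take for granted. Taking Theorem~\ref{thmB}(2) at face value, $H^\bullet_c$ is nonzero only in degrees $2n$ and $2n-m$, and the alternating sum yields $q^n + (-1)^m\,\mathbf{c}(m)\,q^{n-m}$: the class in degree $2n-m$ carries the sign $(-1)^{2n-m}=(-1)^m$, which matches the claimed minus sign only when $m$ is odd. In fact the spectral sequence in Section~\ref{Section5} places the surviving $\mathfrak{S}_n$-invariant $E_1$-class in total degree $q-p=2m-1$ rather than in degree $m$ (the two coincide exactly when $m=1$), and with the cohomology concentrated in degree $2m-1$ the relevant sign is $(-1)^{2m-1}=-1$ for every $m$, yielding $q^n-\mathbf{c}(m)\,q^{n-m}$ as claimed. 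When you feed a cohomology computation through the trace formula you should record the degree and the resulting sign explicitly rather than asserting the collapse; doing so here surfaces a discrepancy between the stated degree in Theorem~\ref{thmB} and the point count, which is worth flagging.
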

\begin{remark}\label{rem1.3}
The case $m=1$ itself is of special interest- it answers questions about the topology of the moduli space of simply-branched morphisms. If $m=1$ then $\mathbf{c}(m) =2$. So when $n\geq 3$, Theorem \ref{thmB} provides answers for $H^*(\Simp_n(\mathbb{C});\mathbb{Q})$ and $H^*({\Simp_n}_{/\K};\mathbb{Q}_{\ell})$ which were not previously known. In particular, for all $n\geq 3$ Corollary \ref{corB} immediately implies the following:$$\#\Simp_n(\mathbb{F}_q)  = q^n-2q^{n-1}$$ where $q=p^d$, provided $n+1<p$. When $n= 1$, Them \ref{thmB} is trivial because all degree $2$ morphisms are simply-branched i.e. $\Simp_1\cong \A^1$. If $n=2$, then $\Simp_2$ is isomorphic to the space of square-free quadratic polynomials by the map defined \eqref{Disom}. Results for the latter space are well-known thanks to Arnol'd's work (see e.g. \cite{Arnol'd1969}). 
\end{remark}
\paragraph{Further remarks.}
\begin{enumerate} 
	\item In  characteristic $p>0$, we could have also considered the moduli space of polynomials of degree $n+1$ which are unramified as self-maps of the affine line. However, we have also seen that these spaces are nonempty if and only if $n+1 =p^k$ for some $k$. So, our assumption of $n+1<p$ rules out the unramified case.
	\item Note that Item (2) in Theorem \ref{thmB} does not imply \etale cohomological stability when $char \K >0$.  When $n$ is large, morphisms with wild ramification will inevitably come into the picture. Via Artin-Schreier theory one can construct infinite families of degree $n$ morphisms $f:\A^1_{\overline{\F}_p} \to\A^1_{\overline{\F}_p}$ with a fixed ramification type.  Furthermore, note that $\Simp^m_n$ is not a proper scheme over $\mathbb{Z}$. So even the customary base change and Grothendieck-Lefschetz theorems would not help with finding a formula for $\# \Sin(\F_q)$ for large values of $n$.
\end{enumerate}
\paragraph{Some context.}
\begin{enumerate}
	\item Theorem \ref{thmB} is orthogonal to the plethora of results concerning the (co)homology of the moduli space of polynomials with a prescribed order of zeroes (also known as configuration spaces on $\mathbb{C}$) due to Arnol'd (see e.g ~\cite{Arnol'd1969,V.I.Arnold1970}), Napolitano (\cite{Napolitano1998}) etc. While most results concentrate on spaces recording the zeroes of polynomials, $\Simp^m_{n}$ records the ramification. This in turn prevents us quoting the Leray Serre spectral sequence for inclusion, unlike the papers on configuration spaces.
	In fact, our results should be viewed in the spirit of the long standing open problem of understanding the topology of the  \emph{Hurwitz space}. The irreducibility of the Hurwitz space is a classical result proved in ~\cite{Clebsch1872}, with a more modern account in ~\cite{Fulton1969}, but the topology of its subvarieties corresponding specific ramification loci is almost completely unknown. Our result is that of stability of the cohomology of these Hurwitz spaces satisfying certain conditions.
	
	\item  A well-known method of looking at this the Hurwitz spaces, at least when $\K = \mathbb{C}$, is by considering topological finite covers of punctured smooth projective curves (see, e.g. \cite{Romagny2006}, and the references therein).  As an example, note that each element of $\M$ corresponds to an $(n+1)$-sheeted cover of $\A_{\mathbb{C}}^1 - \{p_1,\ldots, p_n\})$. One looks at finite quotients of the topological fundamental group  $\pi_1^{top}(\A^1-\{p_1,\ldots,p_n\})$ (which is finitely generated), or in turn, subgroups of $\pi_1^{top}(\A^1-\{p_1,\ldots,p_n\})$ of a fixed (finite) index, of which there are only finitely many. 
	
	In a beautiful paper on Cohen-Lenstra statistics, Ellenberg-Venkatesh-Westerland study Hurwitz schemes with fixed Galois group (see \cite{Ellenberg2015}) and prove a homological stability result.  The resemblance of  $\Simp^m_n$ with the Hurwitz schemes in \textit{loc. cit}. is close enough to warrant digging a little deeper to see why the techniques in ~\cite{Ellenberg2015} seem unlikely to imply Theorem \ref{thmB}. The key difference between this paper and theirs lies in the Galois groups of the finite covers of $\A^1-\{p_1,\ldots,p_n\}$. In ~\cite{Ellenberg2015}, they consider $G$-covers of $\A_{\mathbb{C}}^1 - \{p_1,\ldots, p_n\}$, where $G$ is a fixed group (satisfying certain conditions), and the number of branch points grow, thereby increasing the genus of the projective completion of the cover but keeping the degree of the cover unchanged. In our case, the genus of the cover is always $0$, whereas the monodromy group, which in some cases would turn out to be $\mathfrak{S}_{n+1}$ (e.g when all branch points are simple) grows with the degree of the cover. 
	
	\item In contrast to Theorem \ref{thmB}, the \etale cohomology groups $H^i_{\et}(\Simp^m_n;\mathbb{Q}_{\ell})$ do not stabilize when $char \K>0$- a divergence from other comparable stability results (see e.g \cite{Ellenberg2015}, and Farb-Wolfson's work on configuration spaces see \cite{Farb2015}). Indeed, the moduli space of polynomials $f\in \overline{\F}_p[x]$ of degree $n$ that are unramified as self-maps of $\A^1_{\overline{\F}_p}$ is nonempty if and only if $n$ is a prime power. To see this, note that when $n$ is a prime power, there are the Artin-Scherier examples like $x^n-x$, which is unramified since $\frac{d}{dx} (x^n-x) =-1 \neq 0$; the other direction follows from the work of Grothendieck (see \cite{Grothendieck1971}) which goes roughly in the following way. Let $\phi$ be a polynomial of degree $n$ where $n=p^km$ for some $m$ and $p \not\vert \, m$, and suppose the finite morphism $\phi: \mathbb{P}^1_{\overline{\F}_p}\to \mathbb{P}^1_{\overline{\F}_p}$ is ramified only at $\infty$. Its tame pullback kills the prime-to-$p$ part of the inertia at $\infty$ and gives us an unramified morphism $\widehat{\phi}: \A^1_{\overline{\F}_p}\to \A^1_{\overline{\F}_p}$ with tame ramification at $\infty$ and the inertia group cyclic of order $m$. Since $\text{gcd}(m,p)=1$, the map $\widehat{\phi}$ can be lifted to characteristic $0$, which then forces $m= 1$.  
	
	The fact that $H^i_{\et}({\Simp^m_{n}}_{/ \overline{\F}_p};\Q_{\ell})$ does not stabilize is a manifestation of Abhyankar's philosophy: that prime-to-$p$ situation mimics the characteristic $0$ picture, else, every type of cover that can possibly occur, indeed occurs (see \cite[Section 3]{Stevenson2017}).
	
\end{enumerate}

\paragraph{Outline of proof of Theorem \ref{thmB}.}
Fixing $n\geq 3$, our approach to computing $H^i(\Simp^m_{n})$ for each $m \geq 0$ can be summarized as follows.
	\begin{enumerate}
		\item We first relate the ramification of a polynomial with its derivative. We relocate the whole problem to $M'_n$, the space of derivatives of all elements in $\M$. Noting that $M'_n \cong \M$, we reduce the problem to computing the cohomology of the image of $\Simp^m_n$ in $M'_n$ by studying the  ordered zeroes of elements in $M'_n$ i.e. "the root cover of $M'_n$".
	\item  We construct  posets that encode the ramification behaviour of elements of $\M$. More precisely, fixing the ramification data stratifies $\M$, and in turn $M'_n$, into a disjoint union of locally closed subsets whose closures give us a covering of the root cover of $M'_n$ by closed sets.  Their pre-image in the root cover is combinatorially described by the posets $\P^m_n$. Our first step is to prove that $\P^m_n$ is \emph{shellable} (see \Cref{Section3.1}). The key implication of being shellable, for us, is that the only nonzero reduced cohomology of an "open interval" in $\P^m_n$ resides in its top dimension.
	\item We study the geometric properties of the strata in the above-mentioned stratification in \Cref{Section4}, in particular Proposition \ref{VBness}.
	\item We use shellability of $\P^m_n$ to construct the resolution \eqref{resol} of $j_{!}\Q_{\U^m_n}$ (see Lemma \ref{lem5.1}) where $\U^m_n$ denotes the space of "ordered ramification points", defined in \eqref{Un}. 
	\item Finally, we compute  $H^*(\U^m_n,\Q)$ by incorporating  the geometric  properties of the stratification from Item 2, and shellability of $\P^m_n$ from Item 1 in the  resolution of $j_{!}\Q_{\U^m_n}$, as mentioned in Item 3. Taking $\mathfrak{S}_n$ of the resulting spectral sequence now finishes the proof of Theorem \ref{thmB} since most terms on the $E^1$ page turn out to be $0$ thanks to Propositions \ref{VBness} and \ref{semimodular} (see \Cref{Section5}).
\end{enumerate}

	\subsection*{Acknowledgements}
I am very grateful to my advisor, Benson Farb, for his patient guidance and unconditional support.  His many invaluable comments on earlier versions of this paper have been instrumental in its improvement. I thank Patricia Hersh for pointing out some existing results about posets, Alexander Beilinson, a brief discussion with whom turned out to be crucial to my understanding of the final section of this paper, and Akhil Mathew for indicating the right source to support a proof in Section 5. I heartily thank Lei Chen and Ronno Das for being my sounding board during the final days of this project. I also thank Madhav Nori for posing a question during a discussion which eventually led to this problem.

\section{Ramification, derivatives and the "ramification  cover"}
In this section we elaborate on our first step discussed in the proof outline above. We assume that $char\K=0$ or $char\K >n+1$. We show that the ramification behaviour of a polynomial is reflected, to a large extent, by its derivative.  Switching to the "space of derivatives" is our first step to prove Theorem \ref{thmB}.

For convenience, let us briefly recall (and expand on) the definitions from page 1 of the introduction. Let $f:X\to Y$ be a finite morphism of smooth curves defined over $\K$. Let $f^{\#}: \mathcal{O}_{Y,b}\to \mathcal{O}_{X,a}$ be the homomorphism induced by $f$ on the stalks of the structure sheaves $\mathcal{O}_Y$ and $\mathcal{O}_X$ at the closed points $b=f(a) \in Y$ and $a\in X$ respectively. Let $y$ be a generator for the maximal ideal in $\mathcal{O}_{Y,b}$. The valuation of $f$ at $a$,  which we denote by $v_f(a)$, is defined as $v_a(f^{\#}(y))$ where $v_a$ is the valuation associated to the discrete valuation ring $\mathcal{O}_{X,a}$. In this paper we assume $X = Y = \A^1$. \begin{defn}[{\bf{Ramification data}}]\label{ramdata}
	Let $n$ be a positive integer. For an element $\phi \in \M$ we   define the  \emph{ramification data of $\phi$}  as three sets of data:
\begin{enumerate}
\item the ramification points of $\phi$, given by $Ram(\phi) = \{a\in \A^1: v_{\phi}(a)\geq 2\}$,
\item the branch points of $\phi$, given by $\phi(Ram(\phi))$,
\item associated to each point $b\in \Br(\phi)$ we define the \emph{ramification of $\phi$ over $b$} as an unordered $l(B_b(\phi))$-tuple $Ram_b(\phi) \in \Sym^{l(B_b(\phi))}\A^1$ via:
 \begin{equation}\label{eq2.1}
Ram_b(\phi) := \{a\in \A^1:a\in \phi^{-1}(b) \cap Ram(\phi), \text{ counted } (v_\phi(a)-1) \text{ times} \}
 \end{equation}

\end{enumerate} \hfill $\square$
\begin{defn}\label{simplybranchedRam}
For $a\in Ram(\phi)$ we say that $\phi$ is \emph{simply-branched} at $a$ or $a$ is a  \emph{simply-branched ramification point} of $\phi$ if $B_{\phi(a)}(\phi) = \{2\}$.  
We that $b\in \Br(\phi)$ is a \textit{simple branch point} of $\phi$ or $\phi$ is \emph{simply-branched} at $b$ if $b$ is the image of a simply-branched ramification point of $\phi$.

\noindent For $a_1, a_2 \in Ram(\phi)$ we say that \emph{$a_1$ and $a_2$ are sibling ramification points} if $\phi(a_1) = \phi(a_2)$.

\hfill $\square$ \end{defn} 

\begin{remark}
Note that if $\phi$ is simply-branched at $a$, it is clearly simply ramified at $a$, but the converse is not true.	The above definition also implies that $\phi$ is\emph{ non-simply-branched at $a\in Ram(\phi)$} if and only if $l(B_{\phi(a)}(\phi)) \geq 2$.
\end{remark}
\end{defn}
Let the ramification data of $\phi \in \M$ be given by:
\begin{equation}\label{FR}\begin{split}
	& \Br(\phi) = \{b_1,\ldots, b_p\}\\
	& \text{for each }  i, \text{ let }B_{b_i}(\phi) = \{e^{1}_i, \ldots, e^{k_i}_i \},\\ &\text{and let } Ram_{b_i}(\phi) = \Big(\underbrace{a^{1}_i,\ldots, a^{1}_i}_{e^i_1-1}, \ldots,\underbrace{a^{k_i}_i,\ldots, a^{k_i}_i}_{e^i_{k_i}-1}\Big).\end{split}
\end{equation} 
	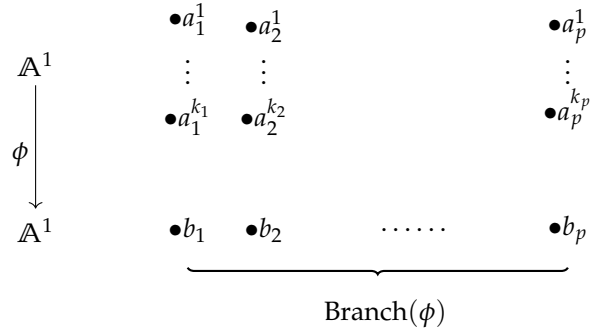
\begin{figure}[H]
		\centering
	\begin{tikzpicture}
	\node at (0,0.3) {$\bullet a^1_{1}$}; 
		\node at (1,0.2) {$\bullet a^1_{2}$}; 
		\node at (0,-0.3) {$\vdots$};               
		\node at (1,-0.3) {$\vdots$};                
	\node (A1) at (-2,-0.3) {$\A^1$};            
	\node (A2) at (-2,-2.5) {$\A^1$};            
	\draw [->] (A1)-- (A2);                         
	\node at (0,-1) {$\bullet a^{k_1}_1$};                    
	\node at (-2.2,-1.5) {$\phi$};                        \node at (5,-2.5) {$\bullet b_p$};
	\node at (1,-1) {$\bullet a^{k_2}_2$};                      \node at (5,-0.3) {$\vdots$};  
	\node at (0,-2.5) {$\bullet b_1$};      
		\node at (3,-2.5) {$\dots\dots$};                 	\node at (5,-0.9) {$\bullet a^{k_p}_p$}; 
	\node at (1,-2.5) {$\bullet b_2$};                      	\node at (5,0.2) {$\bullet a^1_{p}$}; 
	\node at (2.6,-3.6) {$\Br(\phi)$};
	\draw [
	thick,
	decoration={
		brace,
		mirror,
		raise=0.5cm
	},
	decorate
	] (0,-2.5) -- (5,-2.5) ;
	\end{tikzpicture}
	\caption{The diagram above is a schematic of the morphism $\phi\in \M$ with ramification data given by \eqref{FR}.}
\end{figure}
   Therefore, we have $$\phi'(x)  = \prod\limits_{1\leq i\leq k_1}(x-a^i_{1})^{e^i_{1} - 1} \ldots \prod\limits_{1\leq i\leq k_p}(x-a^i_{p})^{e^i_{p} - 1} $$ i.e. the derivatives of the morphisms in $\M$ completely determine, and are determined by the ramification indices. For $a \in Ram(\phi)$,  we define the \emph{differential length of $\phi$ at $a$} to be the order of vanishing of $\phi'$ at $a$ i.e if $v_{\phi}(a) =e$, then the differential length of $\phi$ at $a$ is $e-1$. \footnote{The differential length goes by other similar names, like, for example, \emph{length, different} etc. Our definition holds only for tamely ramified morphisms. For a general definition, see, e.g \cite{Hartshorne1977}.}
\noindent This leads us to introduce a new space  defined by $$M'_n := \Big\{\frac{1}{n+1}\phi': \phi\in \M\Big\}.$$ Note that $M'_n$ is the space of all monic degree $n$ polynomials over $\K$, and so $M'_n \cong \A^n$. Define the function \begin{gather*}
\mathcal{I}: M'_n \to \M \\ f \mapsto (n \times \text{the antiderivative of f that vanishes at } 0)
\end{gather*}
The Riemann-Hurwitz formula guarantees that the sum total of the differential lengths for any morphism $\phi\in \M$ is $n$. This gives us the following isomorphism:

\begin{align}\label{Disom}
\begin{split}
\mathcal{D}: \M \xlongrightarrow{\cong} M'_n \\
\phi \longmapsto \frac{\phi'}{n+1} \\
\I (f) \longmapsfrom f\\
\end{split}
\end{align}
Furthermore, let $\ell$ denote a positive integer, $\phi\in\M$ and $f\in M'_n$. Then it follows from \eqref{Disom}:  \begin{align}
& a\in Ram(\phi), \text{ with differential length } \ell \rightsquigarrow a\in \Big\{\text{Zeroes of }{\frac{\phi'}{n+1}} \Big\}\text{, with multiplicity }\ell\nonumber \\
& a\in Ram(\I(f)) , \text{ with differential length } \ell \leftsquigarrow a\in \big\{\text{Zeroes of }f \big\}\text{, with multiplicity }\ell
\end{align}

\noindent To study the topology of $\Simp^m_n$, we appeal to the isomorphism in \eqref{Disom} and define $$\S_n^m := \mathcal{D}\Simp_{n}^m.$$
So, $\S_n^m$ is Zariski open dense of $M'_n$ for all $m$. As with $\Simp_n$, we omit $m=1$ and write $\S_n$ instead of $\S^1_n$. We thus  have the following commutative diagram:

\[\begin{tikzcd}
\Simp_n^m \arrow{r}{\cong} \arrow[hookrightarrow]{d} & \S_n^m \arrow[hookrightarrow]{d} \\
\M \arrow{r}{\cong} & M'_n
\end{tikzcd}
\]

\paragraph{The ramification cover of $\M$.}

The \emph{ramification cover of $\M$} is the space of ordered ramification points of elements in $\M$, with multiplicities equal to  the differential lengths. In other words, the ramification cover of $\M$ is merely the root cover of $M'_n$. 	Let this be denoted by  $\X_n$. Therefore,  $\X_n = \mathbb{A}^n$ since it is the space of ordered roots of degree $n$ monic polynomials over $\K$. There is an obvious action of  $\mathfrak{S}_{n}$, the symmetric group on $n$ letters,  on $\X_n$ given by permuting the coordinates. This action is fixed-point free off the diagonals, resulting in a finite surjective morphism

\begin{gather}
\pi: \X_n \longrightarrow \X_n/\mathfrak{S}_n = M'_n \nonumber\\
(a_1,\ldots,a_n) \mapsto (x-a_1)\ldots (x-a_n)
\end{gather}\label{Xn}

The branch locus of $\pi$ is precisely the complement of the space of monic square-free degree $n$ polynomials in $M'_n$.  In other words, $\X_n$ is what one calls the "root-cover" of $M'_n$. Let $$U^m_n: = \pi^{-1} \S^m_n.$$ Thus, for example, the pre-image of $\S_n \subset M'_n$ in  $\X_n$ is given by \begin{gather}
\U_n := \pi^{-1}(\S_n) =\nonumber\\ \Bigg\{(a_1, \ldots, a_n): a_i \neq a_j \hspace{2mm} \text{and} \hspace{2mm} \Big(\I( \pi(a_1,\ldots, a_n))\Big)(a_i) \neq \Big(\I(\pi(a_1,\ldots, a_n)\Big)(a_j) \hspace{2mm} \forall i<j\Bigg\},
\end{gather}\label{Un}
where we write $\U_n$ instead of $\U_n^1$. 
Thus we have: 

\[\xymatrixrowsep{0.5pc}
\xymatrixcolsep{1pc}
\xymatrix{
	\U^m_n \ar@{->>}[dr]^{\pi\vert_{\U^m_n}} \ar@{^{(}->}[dd]\\
	& \S^m_n \ar@{^{(}->}[dd]   && \Simp^m_n \ar@{^{(}->}[dd] \ar[ll]^{\cong}\\
	\X_n \ar[dr]^{\pi}  \\
	& M'_n && \M \ar[ll]^{\cong}
}
\]
	\section{Stratification of $\X_n$: the combinatorics}
We should, for clarity, recall the convention fixed at the beginning of the introduction: all varieties are defined over an algebraically closed $\K$ and  
$n$ always denotes a positive integer that satisfies $n+1< char \K$ whenever $char \K >0$. 
We have seen briefly seen how ramification data on polynomials give us a stratification of the moduli space $\M$ and in turn, a stratification of $M'_n$.  The goal of this section is to encode this stratification into concrete combinatorial terms to give a stratification of the ramification cover $\X_n$. 

Now, $$\X_n-\U_n = \bigcup\limits_{1\leq i<j\leq n} T_{ij} \hspace{2mm} \bigcup \bigcup\limits_{1\leq i<j\leq n}D_{ij}$$ where
\begin{equation}
\begin{aligned}
&T_{ij}= \Bigg\{(a_1, \ldots, a_n): a_i = a_j\Bigg\}, \text{ and }\\ & 
D_{ij}=  \Bigg\{ (a_1, \ldots, a_n): \displaystyle \frac{\Big(\I (\pi(a_1,\ldots , a_n))\Big)(a_i)- \Big(\I( \pi (a_1,\ldots, a_n))\Big)(a_j)}{{ (a_i-a_j)^3}}=0 \Bigg\}
\end{aligned}
\end{equation}
\begin{figure}[H]\centering\begin{tikzpicture}
	\node at (0,-1.3) {$\bullet$$3$};     
\node (A1) at (-2,-1.3) {$\A^1$};            
\node (A2) at (-2,-2.5) {$\A^1$};            
\draw [->] (A1)-- (A2);                         

\node at (-2.2,-1.8) {$\phi$};                        \node at (5,-2.5) {$\bullet$};    \node at (1,-2.5) {$\bullet$};   	\node at (3,-1.5) {$\dots$};   
\node at (1,-1.3) {$\bullet 2$}; 
\node at (5,-1.3) {$\bullet 2$};
\node at (0,-2.5) {$\bullet$};      
\node at (3,-2.5) {$\dots$};                   
                  \node at (2.3,-3) {$\Br(\phi)$};
\end{tikzpicture}\caption{The above diagram is a schematic of a generic point $\phi \in \I_{\circ}\pi(D_{ij}) \subset \M$. The one below is that of a generic point $\psi \in \I_{\circ}\pi(T_{ij})$.}
\vspace{3mm}
\centering	\begin{tikzpicture}
	\node at (0,-1.3) {$\bullet 2$};      	\node at (0,0) {$\bullet 2$}; \node at (3,-1.5) {$\dots$};  
	\node (A1) at (-2,-0.7) {$\A^1$};            
	\node (A2) at (-2,-2.5) {$\A^1$};            
	\draw [->] (A1)-- (A2);                         
	
	\node at (-2.2,-1.5) {$\psi$};                        \node at (5,-2.5) {$\bullet$};    \node at (1,-2.5) {$\bullet$};     
	\node at (1,-1.3) {$\bullet 2$}; 
	\node at (5,-1.3) {$\bullet 2$};
	\node at (0,-2.5) {$\bullet$};      
	\node at (3,-2.5) {$\dots$};                   
	\node at (2.3,-3) {$\Br(\psi)$};
	\end{tikzpicture}
\end{figure}
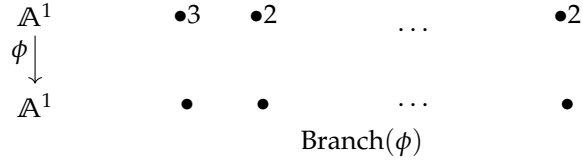
Note that $D_{ij}$, $T_{ij}$ as well as $\pi(D_{ij})$ and $\pi(T_{ij})$ are all $\mathbb{Z}$-schemes; they are defined in $\X_n$ and $M'_n$ respectively, each isomorphic to $\A^n$, by equations with coefficients in $\mathbb{Z}$. So $\U^m_n$, its $\mathfrak{S}_n$-quotient $\S^m_n$ and in turn $\Simp^m_n$ are defined over $\mathbb{Z}$ for all $m$ and $n$. Note that \begin{gather*}
\pi(D_{ij}) =\Big\{f \in M'_n: \I(f) \text{ has exactly one branch point that satisfies } B_b(\I(f)) =\{2, 2\}  \Big\}\\ \text{and, } 
\pi(T_{ij}) =\Big\{f \in M'_n: \I(f) \text{ has exactly one branch point that satisfies } B_b(\I(f)) =\{3\}  \Big\},
\end{gather*} as show in the diagram above (also, see Definition \ref{ramdata} and \eqref{Disom}.)
The closed subvarieties formed by the intersection of various combinations of the divisors $D_{ij}$ and $T_{ij}$ give us a stratification of $\X^n$,  the combinatorics of which we describe now. 

\subsection{A combinatorial description of  stratification by locally closed subsets}\label{Section2.1}
In this section, we describe $\X_n$ as a disjoint union of locally closed subsets, are indexed by a the elements of a certain poset. We first fix a convention: if $\rho$ is a partition of the set $\{1,2, \ldots, n\}$, we denote by $\rho(j)$ the subset of $\{1,2, \ldots, n\}$ that contains $j$.

For each $(a_1, \ldots, a_n) \in \X_n$, we define a pair of partitions, say $\rho_1(a_1,\ldots, a_n)$ and $\rho_2(a_1,\ldots, a_n)$, on the set $\{1,2, \ldots, n\}$ in the following way: \begin{equation}
\begin{aligned}
& i \in {\rho_1(a_1,\ldots,a_n)} (j)  \hspace{3mm} \text{if} \hspace{2mm} a_i =a_j,\\
& i\in {\rho_2(a_1,\ldots,a_n)} (j) \hspace{3mm} \text{if}\hspace{2mm}  \I (\pi(a_1,\ldots , a_n)) (a_{i}) = \I (\pi(a_1,\ldots , a_n)) (a_{j}) 
\end{aligned}\label{rho} 
\end{equation} 
Note that $\rho_1(a_1, \ldots, a_n) $ partitions  an $n$-tuple $(a_1, \ldots, a_n)\in \X^n $  according to the differential length at each $a_i$, and $\rho_2(a_1, \ldots, a_n) $ partitions $(a_1, \ldots, a_n) $ according to whether these points are siblings under the morphism $\I(\pi(a_1,\ldots, a_n))$. We thus obtain a map:
\begin{equation}
\begin{aligned}
&\rho: \X_n \longrightarrow \Pi_n \times \Pi_n\\
&(a_1,\ldots, a_n) \mapsto \rho_1(a_1,\ldots, a_n), \rho_2(a_1,\ldots, a_n)
\end{aligned}
\end{equation} 
where $\rho_1$ and $\rho_2$ are as defined in \eqref{rho}. Let $\P_n := \rho(\X_n) \subset  \Pi_n \times \Pi_n$. 

\vspace{3mm}

\noindent{\textbf{Caution!}  $\P_n$ is only a {subset} of $\Pi_n \times \Pi_n$, not a sub-poset. We will soon define a partial order on $\P_n$, and that partial order will not be the same as the one $\P_n$ inherits by virtue of being a subset of the poset $\Pi_n \times \Pi_n$.
	
\vspace{3mm}
	
For each $\alpha \in \P_n$, let $S(\alpha) := \rho^{-1}(\alpha).$  Then $S(\alpha)$ is a locally closed subset of $\X_n$ and $$\X_n = \bigsqcup\limits_{\alpha \in \P_n} S(\alpha)$$
We write $\rho_1(\alpha):= \rho_1(a_1,\ldots,a_n)$ and $\rho_2(\alpha): = \rho_2(a_1,\ldots, a_n)$, and think of them as the "coordinates" of $\alpha$  in $\Pi_n$. 

Now recall the notations and terminology set up in Definitions \ref{simplybranchedRam} and \ref{ramdata}. To each $\alpha\in \P_n$ we associate three subsets of $\{1,2,\ldots,n\}$, as follows:\begin{enumerate}
		\item $N(\alpha)$, the "simple part of $\alpha$",  index the simply-branched ramification points of morphisms in  $\I_\circ\pi(S(\alpha)) \subset \M$ are simply-branched.  Equivalently, \begin{equation*}
		\begin{aligned}
		N(\alpha): =\{i:  \text{ both }\rho_1(\alpha)(i) \text{ and }\rho_2(\alpha)(i) \text{ are singletons} \}
		\end{aligned} 
		\end{equation*}
		\item  $R(\alpha)$, the "non-simple part of $\alpha$", index ramification points of morphisms in $\I_\circ\pi(S(\alpha)) \subset \M$ with non-simple-branching. We define $$R(\alpha)_i = \{j \in \{1,\ldots,n\}-N(\alpha): j\in \rho_1(\alpha)(i)\}$$ and let
			$$R(\alpha)  :=\{R(\alpha)_i: i \in \{1,\ldots,n\}-N(\alpha)\}. $$ For all $\phi\in \I_\circ\pi(S(\alpha)) \subset \M$, note that $\# R(\alpha)$ merely denotes the number of ramification points where $\phi$ is non-simply-branched.
		\item $F(\alpha)$, the "partition of $\alpha$ into siblings", is defined via  $$F(\alpha)_j : =\{k: k\in \rho_2(\alpha)(j)  \}$$ and letting $$F(\alpha)  :=\{F(\alpha)_j: j \in\{1,\ldots,n\}-N(\alpha)\}.$$ Note that $F(\alpha)_j$, for $j\in \{1,2,\ldots,n\}-N(\alpha)$ and for any morphism in $\I_\circ\pi(S(\alpha))$, form a partition of the set of non-simply-branched ramification points according to whether they are siblings or not.  
	\end{enumerate}Now, definition \ref{ramdata} is equivalent to the "unordered version" of the sets $N(\alpha), R(\alpha)$ and  $F(\alpha)$, where $\alpha\in \P_n$ is such that $\phi\in \I_\circ\pi(S(\alpha))$. Indeed, $F(\alpha)_j$, for each $j$, up to re-ordering, is nothing but $Ram_b(\phi)$ for some $b\in \Br(\phi)$ where the ramification points are counted as many times as their differential lengths. More precisely, \eqref{FR} gives us the following:\begin{equation}
\begin{aligned}
&F(\alpha) = \{F(\alpha)_1, \ldots, F(\alpha)_r\}, \\
&F(\alpha)_j = \bigsqcup\limits_{1\leq i\leq k_j}R(\alpha)^{i}_j ,\\
&\lvert R(\alpha)^{i}_j\rvert = e^i_j-1 \hspace{2mm}\text{and}\hspace{2mm} \lvert F(\alpha)_j \rvert = k_j .
\end{aligned} 
\end{equation}
Moreover, by the Riemann-Hurwitz formula we have $\lvert N(\alpha)\rvert = n-\sum_{i,j}(e^i_j-1)$.	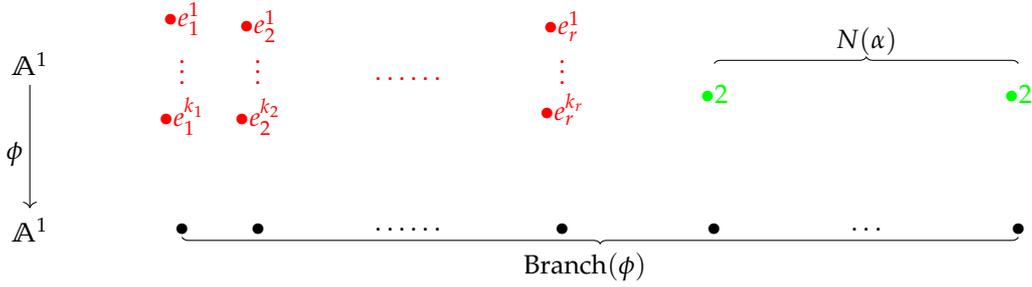
\begin{figure}[H]
	\centering
	\begin{tikzpicture}
	\node at (0,0.3) {\color{red}$\bullet$$e^1_1$}; 
	\node at (1,0.2) {\color{red}$\bullet$$e^1_2$}; 
	\node at (0,-0.3) {\color{red}$\vdots$};               
	\node at (1,-0.3) {\color{red}$\vdots$};                
	\node (A1) at (-2,-0.3) {$\A^1$};            
	\node (A2) at (-2,-2.5) {$\A^1$};            
	\draw [->] (A1)-- (A2);                         
	\node at (0,-1) {\color{red}$\bullet$$e^{k_1}_1$};                    
	\node at (-2.2,-1.5) {$\phi$};                        \node at (5,-2.5) {$\bullet$};    \node at (7,-2.5) {$\bullet$};    \node at (9,-2.5) {$\ldots$}; \node at (11,-2.5) {$\bullet$};
	\node at (1,-1) {\color{red}$\bullet$$e^{k_2}_2$};                      \node at (5,-0.3) {\color{red}$\vdots$};   \node at (7,-0.7) {\color{green}$\bullet 2$}; 
	\node at (11,-0.7) {\color{green}$\bullet 2$};
	\node at (0,-2.5) {$\bullet$};      
	\node at (3,-2.5) {$\dots\dots$};    \node at (3,-0.5) {\color{red}$\dots\dots$};                 	\node at (5,-0.9) {\color{red}$\bullet$$e^{k_r}_r$}; 
	\node at (1,-2.5) {$\bullet$};                      	\node at (5,0.2) {\color{red}$\bullet$$e^1_r$}; 
	\node at (5.3,-3) {$\Br(\phi)$};	\draw [
	decoration={
		brace,
		mirror,
		raise=0.1cm
	},
	decorate
	] (0,-2.5) -- (11,-2.5) ;
	\draw [
	decoration={
		brace,
		raise=0.5cm
	},
	decorate
	] (7,-0.8) -- (11,-0.8) ;
	\node at (9,0) {$N(\alpha)$};
	\end{tikzpicture}\caption{This is a diagrammatic presentation of the sets $N(\alpha), R(\alpha)$ and $F(\alpha)$, but only up to permutation by $\mathfrak{S}_n$. Let $\phi \in \I_\circ\pi(S(\alpha))$. The colored points are the ramification points of $\phi$, with ramification indices specified. The red points denote the non-simply-branched ramification points of $\phi$. Counted as per the differential lengths (=ramification index $-1$), they constitute $R(\alpha)$. The green points, the simply-branched ramification points of $\phi$, form $N(\alpha)$. Each "column" of red points (again, counted correctly) is an element in $F(\alpha)$.}\label{Fig3}
\end{figure}
	
\noindent	Finally,  give a partial order to $\P_n$  by reverse inclusion i.e. by declaring $$\alpha \leq \beta \iff \overline{S(\alpha)} \supseteq S(\beta).$$ 
	Put the notion of a \emph{length} on $\P_n$ given by $l: \P_n \to \{0,1,2,\ldots\}$, where \begin{gather}\label{lengthFormula}
	l(\lambda) : = \text{codim} (\overline{S_{\lambda}}) = \sum (\lvert R(\lambda)_i \rvert + 1) - \sum \lvert F(\lambda)_j \rvert - \lvert F(\lambda) \rvert.
	\end{gather} The second equality of formula \eqref{lengthFormula} follows from Proposition \ref{VBness}.
	\begin{remark}
		If $\phi \in \M$ is such that $\rho\big(\pi^{-1}(Ram(\phi))\big) =\alpha \in \P_n$, then, comparing the formulae \eqref{eq1.1} and \eqref{lengthFormula}, one obtains that $\length(\phi) = l(\alpha)$ i.e. codimension of the strata to which $\pi^{-1} \big(Ram(\phi)\big)$ belongs, equals the length of the ramification of $\phi$, as it should.
	\end{remark}
	$\P_n$ has a greatest and a least element. 	Let $\widehat{0}$ denote the  element in $\P_n$ for which $\rho^{-1}(\widehat{0})= \U_n$. Then, $N(\widehat{0}) = \{1,2,\ldots, n\}$ and $R(\widehat{1}) = F(\widehat{0}) = \emptyset$. 
	We denote by $\widehat{1}$ the element in $\P_n$ that is determined by polynomials with \emph{maximal branching}, i.e. $$\widehat{1} =\I\circ \pi\circ\rho^{-1} \{(a_1,\ldots, a_n): a_i=a_j \forall 1\leq i< j\leq n\}. $$ Then, $N(\widehat{1}) = \emptyset$ and $R(\widehat{1}) = F(\widehat{1}) = \{1,2,\ldots, n\}$. Under the partial order defined on $\P_n$, it is clear that $\widehat{0}$ and $\widehat{1}$ are the least and the greatest elements respectively.
	\subsection{$\P^m_n$ as a quotient of $\P_n$.}\label{Pmn}
	
	Fix a positive integer $m$. We construct yet another poset $\P^m_n$,  as a quotient of $\P_n$ by imposing the following equivalence relation:\begin{gather*}
	\lambda \sim \widehat{0} \iff l(\lambda) <m.
	\end{gather*} Let $\P^m_n := \P_n / \sim$, and let $$pr_m: \P_n \to \P^m_n $$ denote the corresponding quotient map. 
	The  poset $\P^m_n$  inherits a notion of length from $\P_n$, which can be defined  as follows. Let $\lambda \in \P^m_n$. Then we define the length of $\lambda$ in $\P^m_n$ via: \[l^{m}(\lambda) :=\Bigg\{\begin{array}{lr}
	0 & \text{if } l(pr_m^{-1} (\lambda))<m\\ 
	l(pr_m^{-1} (\lambda))-m & \text{if }  l(pr_m^{-1} (\lambda))\geq m
	\end{array}	\]
	$\P_n^m$ is equipped with a least and a greatest element, which we continue to denote as $\widehat{0}$ and $\widehat{1}$ by abusing notations, and where $\widehat{0}: = pr_m(\widehat{0})$ and $\widehat{1}:= pr_m(\widehat{1})$. In fact, the map $\rho:\X_n \to \P_n$ induces a map \begin{align*}
& 	\rho^{(m)}: \X_n \to \P^m_n\\ &\text{defined by } \rho^{(m)}:= pr_m\circ\rho
	\end{align*} and ${\rho^{(m)}}^{-1}(\widehat{0}) = \U^m_n$. Finally, note that $\P_n^1$ is nothing but $\P_n$ itself.
	\subsection{Action of $\mathfrak{S}_n$ on $\P_n$ and stability of the resulting quotient}\label{Section2.2}
	
	The natural action of $\mathfrak{S}_n$ on $\{1,2,\ldots, n\}$ by permutations induce an action on $\P_n$. The goal of this section is to analyse this action, and to make a precise meaning of the statement:
	
	\begin{center}
		{\emph{"The posets $\P_n/\mathfrak{S}_n$ stabilize as $n\to \infty$"}}
	\end{center}
	
There is a canonical inclusion of partially ordered sets $$\iota_n: \P_n \hookrightarrow \P_{n+1}$$ by noting that the partitions defined by \eqref{rho} on $\{1,2\ldots, n\}$, in  \Cref{Section2.1}, are compatible with those on  $\{1,2\ldots, n+1\}$. As a result, for all $\lambda \in \P_n$, we have \begin{enumerate}
		\item $R(\iota_n(\lambda)) = R(\lambda)$,
		\item $F(\iota_n(\lambda)) = F(\lambda)$, and
		\item  $N(\iota_n(\lambda)) = N(\lambda) \cup \{n+1\}$
	\end{enumerate} In particular, $l(\iota_n(\lambda)) = l(\lambda)$.
	
	There is an obvious action of the symmetric group $\mathfrak{S}_n$ on $\P_n$ induced by permutations of $\{1,2,\ldots, n\}$. Note that $\P_n/\mathfrak{S}_n$  documents only the ramification types, i.e. the data consisting of the numbers $e^i_j, k_j's$ and $r$ for various $i,j$ and $r$. It also inherits, in an obvious way, the notion of length from  $\P_n$. It follows that the following diagram of posets commutes:
	\[\begin{tikzcd}\label{CommPoset}
	\P_n \arrow[hookrightarrow]{r}{\iota_n} \arrow[twoheadrightarrow]{d}{\sigma_n} & \P_{n+1} \arrow[twoheadrightarrow]{d}{\sigma_{n+1}} \\
	\P_n/\mathfrak{S}_n  \arrow[hookrightarrow]{r}{\iota_n'} & \P_{n+1}/\mathfrak{S}_{n+1}
	\end{tikzcd}
	\]
	where $\iota_n'$ is an inclusion of partially ordered sets, and $\sigma_n$ and $\sigma_{n+1}$ denote the quotient maps by the respective symmetric groups.
	
	Clearly, $\iota_n$ is not surjective, and neither is $\iota_n'$. However, if we define $$[\P_n]_p := \{\lambda: l(\lambda) =p\}$$ then
	\begin{gather}
	\iota_n'\bigg\vert_{[\P_n]_1/\mathfrak{S}_n} :{[\P_n]_{1}/\mathfrak{S}_n} \longrightarrow {[\P_{n+1}]_{1}/\mathfrak{S}_{n+1}}\label{iota_n}
	\end{gather}
	is a bijection as long as $n\geq 3$. From a geometric perspective, this is simply because $\pi(D_{ij})$ and $\pi(T_{ij})$ are irreducible closed subvarieties of codimension $1$  in $M'_n$, for all $n\geq 3$.
	
	This begs the question: for what values of $m$, depending on $n$, is \begin{equation}
	\begin{gathered}
	\iota_n'\bigg\vert_{[\P_n]_m/\mathfrak{S}_n} :{[\P_n]_{m}/\mathfrak{S}_n} \longrightarrow {[\P_{n+1}]_{m}/\mathfrak{S}_{n+1}}
	\end{gathered}\label{stableiota}
	\end{equation} 
	a bijection? Lemma \ref{admissibile} gives an answer to this question, but before that we need make a few more definitions.
	
\begin{defn}\label{Mu-data}
Let $\mathfrak{P}:= \lim\limits_{\substack{\longrightarrow}} \P_n/\mathfrak{S}_n$, the direct limit of the system $\langle  \P_n/\mathfrak{S}_n, \iota_n'\rangle$.

\hfill$\square$
\end{defn} By the discussion above, $\mathfrak{P}$ itself inherits an obvious notion of length, which we denote by $\length:\mathfrak{P} \to \mathbb{Z}$. Thus, each $\mu \in \mathfrak{P}$ comes with the following data: \begin{align*}
& \text{positive integers } k_1,\ldots, k_r, \nonumber\\ &\text{integers } e^i_j \geq 2, \text{ for each } 1\leq i\leq k_j, \text{ and } 1\leq j \leq r, 
\end{align*} Figure \ref{Fig3} is a schematic diagram of an element in $\mathfrak{P}$. Following the notation set up in \eqref{FR}, and the formula in \eqref{lengthFormula}, if $\mu \in \mathfrak{P}$ is such that $ \mu \in \P_n/\mathfrak{S}_n$, then one has $$\length(\mu) = l(\sigma_n^{-1}(\mu)) = \sum\limits_{1\leq j \leq r} \Big(\sum\limits_{1\leq i\leq k_j}(e^i_j-1) -1\Big).$$
	We say \textbf{\textit{$\mu$ is a length $m$ ramification}} if $\length(\mu) =m$. At this juncture, one should recall Definition \ref{ramdata}. To consolidate the idea presented in definition \ref{ramdata} with what we have discussed so far, note that if $\phi\in \M$ is such that $\length(\phi) =m,$ then \begin{gather*}
	(\sigma_n\circ\rho\circ\pi^{-1}\circ\mathcal{D})(\phi) \in \mathfrak{P}  \\
	\text{and } \length(\sigma_n\circ \rho\circ\pi^{-1}\circ\mathcal{D}(\phi))=m,
	\end{gather*} which is as it should be. We define \textbf{the ramification type of $\phi$} to be $(\sigma_n\circ\rho\circ\pi^{-1}\circ\mathcal{D})(\phi) \in \mathfrak{P}$. Finally, we say  \textbf{\text{$\widetilde{\mu}$ is a  of $\mu$}} if $\widehat{\mu} \in \P_n$ and $\sigma_n(\widetilde{\mu}) =\mu$.
	
	\begin{defn}[\emph{combinatorial $n$-admissibility}]
		An element $\mu \in \mathfrak{P}$ is said to be \textbf{combinatorially $n$-admissible} if $\mu \in \P_n/\mathfrak{S}_n$. \hfill $\square$
	\end{defn} 
	
	\noindent The question posed in \eqref{stableiota} is now answered in the following lemma.
	\begin{lemma}\label{admissibile}
		For a fixed non-negative integer $m$, \textit{all} elements of $\mathfrak{P}$ having length $m$ ramification is combinatorially $n$-admissible if $n\geq 2m+1.$ 
	\end{lemma}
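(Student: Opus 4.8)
The plan is to unwind the definitions so that "combinatorial $n$-admissibility" becomes a purely numerical inequality, and then check that inequality holds under the hypothesis $n\geq 2m+1$. Recall that an element $\mu\in\mathfrak{P}$ of length $m$ is the ramification type data consisting of integers $k_1,\dots,k_r\geq 1$ and $e^i_j\geq 2$ for $1\leq i\leq k_j$, $1\leq j\leq r$, with $\length(\mu)=\sum_{j}\big(\sum_i(e^i_j-1)-1\big)=m$. By definition $\mu$ is combinatorially $n$-admissible iff $\mu\in\P_n/\mathfrak{S}_n$, i.e. iff there exists some $\widetilde\mu\in\P_n$ with $\sigma_n(\widetilde\mu)=\mu$; equivalently, iff there exists an $n$-tuple $(a_1,\dots,a_n)\in\X_n$ whose image under $\rho$ realizes the partition data $R,F$ prescribed by $\mu$ on $\{1,\dots,n\}$. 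Since the sets $R(\lambda)$ and $F(\lambda)$ only involve the non-simple indices and the simple part $N(\lambda)$ is an arbitrary collection of leftover singletons, the only constraint is that $\{1,\dots,n\}$ has enough room: we need $n\geq \#\{\text{indices used by the non-simple part}\}$, i.e. $n\geq \sum_{j}\sum_i\big(e^i_j-1\big)$, the number of (differential-length-counted) non-simply-branched ramification points, since the remaining $n-\sum_{i,j}(e^i_j-1)$ indices go into $N$.

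So the first step is to record the identity $\lvert N(\lambda)\rvert = n-\sum_{i,j}(e^i_j-1)$ from the Riemann–Hurwitz discussion, and observe that realizability of $\mu$ in $\P_n$ is equivalent to $\sum_{i,j}(e^i_j-1)\leq n$. Indeed, given such data with $\sum_{i,j}(e^i_j-1)\leq n$, one simply picks a partition of a size-$\sum_{i,j}(e^i_j-1)$ subset of $\{1,\dots,n\}$ into blocks of sizes $e^i_j-1$, groups them into $r$ sibling-classes $F_j$ of the right shapes, and lets the complement be $N$; choosing generic coordinates $a_i$ compatible with these partitions (distinct values on distinct $\rho_1$-blocks, and $\I\pi$-values coinciding exactly on $\rho_2$-blocks — possible since the branch-value conditions cut out proper subvarieties, as in \Cref{Section4}/Proposition \ref{VBness}) produces a point of $S(\widetilde\mu)$, so $\mu\in\P_n/\mathfrak{S}_n$.

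The second step is the arithmetic: I claim $\length(\mu)=m$ forces $\sum_{i,j}(e^i_j-1)\leq 2m+1$, whence $n\geq 2m+1$ suffices. Write $d_j:=\sum_{i=1}^{k_j}(e^i_j-1)$ for the total differential length over the $j$-th branch point; then $d_j\geq k_j\geq 1$ and, more importantly, since each $F_j$ corresponds to an honest branch point of a degree $n+1$ map with $l(B_{b_j})\geq 2$ — equivalently $d_j\geq 2$ — unless... actually the cleanest bound: $\length(\mu)=\sum_j(d_j-1)=m$, so $\sum_j d_j = m+r$. Each genuine branch point satisfies $d_j\geq 1$, but a branch point with $d_j=1$ would have $l(B_{b_j})=1$ contributing $l(B_{b_j})-1=0$; such points are allowed in $\mathfrak{P}$ only degenerately — in fact in $\P_n$ every block of $F$ records a genuine multi-point fiber, so $d_j\geq 2$ for each $j$ except that $r$ itself is already constrained. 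The honest estimate is: from $d_j\geq 2$ we get $r\leq \sum_j(d_j-1)=m$, hence $\sum_{i,j}(e^i_j-1)=\sum_j d_j = m+r\leq 2m$, so actually $n\geq 2m$ already works and $n\geq 2m+1$ is comfortably enough. I would then double-check the boundary/edge cases ($r=0$, i.e. $\mu=\widehat 0$; a single branch point $r=1$; the case $m=0$) and confirm that the "$+1$" in $2m+1$ is there to leave at least one index for $N(\lambda)$ in the stability argument of \eqref{stableiota}, matching how $\iota_n'$ is used.

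The main obstacle is not the counting but getting the equivalence in the first step exactly right: I must be careful that being in $\P_n/\mathfrak{S}_n$ really is equivalent to the combinatorial realizability of $(R,F)$ on $n$ letters and that the geometric genericity needed to exhibit an actual point of $\X_n$ mapping to $\widetilde\mu$ is available — this is where Proposition \ref{VBness} (the strata are the expected vector-bundle-like loci, nonempty of the predicted codimension) does the work, so the only real content is citing it correctly and noting that nonemptiness of the top stratum $S(\widehat 1)$-type pieces descends to all $\widetilde\mu$ with the index budget satisfied. Once that is in place, the lemma is immediate from $\sum_{i,j}(e^i_j-1)\leq 2m< 2m+1\leq n$.
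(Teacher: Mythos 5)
There is a genuine gap in your argument, and it is precisely the constraint your first step drops. You claim that realizability of $\mu$ in $\P_n/\mathfrak{S}_n$ is equivalent to the single inequality $\sum_{i,j}(e^i_j-1)\leq n$, i.e.\ that having enough indices in $\{1,\dots,n\}$ for the non-simple blocks is the only obstruction. But combinatorial $n$-admissibility means $\mu$ is actually realized by a point of $\X_n$, i.e.\ by a degree-$(n+1)$ morphism $\phi$, and a degree-$(n+1)$ morphism also satisfies a fiber constraint: for every branch point $b_j$, $\sum_{i=1}^{k_j} e^i_j \leq n+1$, since $\phi^{-1}(b_j)$ has at most $n+1$ points counted with multiplicity. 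This is not a genericity issue that ``cutting out proper subvarieties'' handles — it is an outright emptiness constraint. Concretely, take $m=1$ and the type $\mu$ with $r=1$, $k_1=2$, $e^1_1=e^2_1=2$ (one branch point with two simple ramification points over it): your criterion gives $\sum(e^i_j-1)=2\leq n$ for $n=2$, so you would declare $\mu$ combinatorially $2$-admissible, but a degree-$3$ polynomial cannot have two simple ramification points in a single fiber ($2+2=4>3$). So your sufficient condition is in fact only necessary, and the bound $n\geq 2m$ you derive from it is not correct.

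The paper's proof runs the optimization the other way around: with $\length(\mu)=m$ fixed, it maximizes $\sum_{i=1}^{k_j} e^i_j$ over a single $j$, which (by the same ``all $e^i_j=2$'' reduction you use) is maximized at $2(m+1)$ when one branch point carries $m+1$ simple ramification points. The binding constraint is then $n+1\geq 2k_1 = 2(m+1)$ from \eqref{FR}, which is exactly where the ``$+1$'' in $n\geq 2m+1$ comes from — not from reserving a spare index for $N(\lambda)$ or from the stability map \eqref{stableiota}, as you conjectured. Your second step (the arithmetic $\sum_j d_j = m+r$, $d_j\geq 2$, hence $r\leq m$ and $\sum_{i,j}(e^i_j-1)\leq 2m$) is correct as far as it goes, but it bounds the wrong quantity. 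To repair the proof you need to add the fiber inequality $\sum_{i=1}^{k_j}e^i_j\leq n+1$ to your list of constraints, observe it is the one that can fail at $n=2m$, and then rerun essentially the paper's maximization to recover $n\geq 2m+1$.
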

	\begin{remark}
		Lemma \ref{admissibile}, in other words, says that the map in \eqref{stableiota} is a bijection for $n\geq 2m+1$.
	\end{remark}
	\begin{proof}[Proof of Lemma \ref{admissibile}]
		The general principle on which the proof is based, is as follows. Let $\phi \in \M$. Let $b\in \A^1$ be a branch point of $\phi$ and let $\{t_1,\ldots, t_k\} = Ram(\phi) \cap f^{-1}(b)$ with ramification indices $m_1,\ldots, m_k$ respectively. Noting that $\sum m_j \leq n$, our goal is to find, as $\phi \in \M$ varies,  the minimum value of $n$ that would maximize $\sum\limits_{1\leq j \leq k} m_j$, keeping  $\length (\phi) = l(\rho(\pi^{-1}(\mathcal{D}(\phi)))) = m$ fixed.	
		
		Now, for any $\mu \in \mathfrak{P}$ of length $m$, we have, following the equation of length in \eqref{lengthFormula} and notations in \eqref{FR}:
		\begin{equation}
		\begin{gathered}
		m = \sum\limits_{\substack{1\leq i\leq k_j\\1\leq j \leq r}} (e^i_j-2) + \sum\limits_{1\leq j \leq r} (k_j-1)
		\end{gathered}\label{lengthp}
		\end{equation}
		Writing \eqref{lengthp} as $m = \sum_{j=1}^{r} \Big( \sum_{i=1}^{k_j}(e^i_j-1)-1\Big)$,  we first maximize $ \sum_{i=1}^{k_j}e^i_j$ for each $j$, by keeping $\sum_{i=1}^{k_j}(e^i_j-1)$ fixed. Clearly,  $ \sum_{i=1}^{k_j}e^i_j$ achieves its maximum for each $j$ when $e^i_j =2$ for all $1\leq i \leq k_j$. Therefore, plugging $e^i_j=2$ in \eqref{lengthp}, we now we have $$m =\sum\limits_{1\leq j \leq r} (k_j-1)$$ and our problem has been reduced to maximizing $k_j$ keeping $p$ fixed, for some $j$, which we can assume to be $k_1$ without any loss of generality. Clearly $k_1 = p+1$ and $k_j = 0$ for $2\leq j\leq r$ is the desired solution.  Since $n+1 \geq 2k_1$ by \eqref{FR}, we have $n+1 \geq 2(m+1)$, and so $n \geq 2m+1$. 
		
			\hfill$\square$
	\end{proof} 

\section{Poset Topology and Shellability}\label{Section3}

In this section we aim to prove some purely combinatorial results regarding our poset $\P_n$. To that end, we first recollect some generalities on posets.
\begin{defn} Let $(P,<)$ be a poset. We say that $P$ is \textbf{bounded} if it has a largest element $\widehat{1}$ and a smallest element $\widehat{0}$.
		An \textbf{$m$-chain} of $P$ is a totally ordered subset $c: = x_0< x_1< \cdots<x_m$. We say the \textbf{length of $c$} denoted by $l(c)$ is $m$.
	The \textbf{order complex} $\Delta(P)$ associated to $P$ is the simplicial complex whose $m$-simplices are the $m$-chains. 
	A chain of $P$ is \textbf{maximal} if it is inclusion-wise maximal. The elements of $\Delta(P)$ are called \textbf{faces} and the maximal faces are called \textbf{facets}. A poset is \textbf{pure} or \textbf{graded} if it is bounded and all maximal chains have the same length. 
	
	\hfill $\square$
\end{defn}

Note that for a pure poset $P$, associated to each element $\lambda \in P$ is a \textbf{\emph{length}} $l(\lambda) := l(\widehat{0},\lambda)$.

\subsection*{Cohomology of posets}\label{Section3.1}

Let $u, v \in P$ such that $u \leq v$. Let $\widetilde{C}^k(u,v)$ denote all length $k+1$ chains starting from $x_0 = u$ and ending at $x_{k+1} = v$. There are differentials \begin{equation*}
\begin{gathered}
\delta_j: \widetilde{C}^k(u,v) \to \widetilde{C}^{k+1}(u,v)\\\text{ defined by \hspace{2mm} }
\delta_j(u<x_1<\ldots< x_{k}<v)= \sum\limits_{1\leq i \leq k+1}(-1)^i (u<x_1<\ldots<\widehat{x}_i<\ldots <x_{k+1}<v).
\end{gathered}
\end{equation*} We define $H^{\ast}(u,v):= H^{\ast}( \widetilde{C}^{\bullet}(u,v))$, the cohomology of this cochain complex. 
\begin{convention}
	For $u=v$, we define $\widetilde{C}^{\bullet}(u,v)$ to consist of only $\mathbb{Z}$, placed at degree $-2$. For $u<v$, the cohomology $H^{\bullet}(u,v)$ is that of the corresponding order complex, as defined above. If $u, v \in P$ are such that there does not exist $t\in P$ for which $u <t<v$, then $H^{*}(u,v) \cong \mathbb{Z}$, placed in degree $-1$. 
\end{convention}
\begin{defn}
	Let $P$ be a pure finite poset. For $\alpha, \beta \in P$ we say that \textbf{$\alpha$ covers $\beta$} if $\alpha>\beta$ and there is no $\lambda$ such that $\alpha> \lambda>\beta$.
We say $P$ is  \textbf{semimodular} if whenever two distinct elements $\alpha, \beta \in P$ both cover $\mu \in P$ there is a $\lambda \in P$ which covers each of $\alpha$ and $\beta$. 
	$P$ is \textbf{locally semimodular} if $[\alpha,\beta]$ is semimodular for all $\alpha<\beta$ in $P$.  We say $P$ is \textbf{shellable} if the facets of $\Delta(P)$  can be arranged in linear order $F_1, F_2, . . . , F_t$ in such a way that the subcomplex $$\big(\cup_{1\leq i\leq k-1}\{G\subset F_i\}\big)\cap \{G\subset F_k\}$$ is pure and $(\text{dim} F_k -1)$-dimensional for all $k = 2, \ldots , t$.

\hfill $\square$
\end{defn}

\begin{lemma}[Theorem 6.1 of ~\cite{BjoernerJuly1980}]
	Suppose that a finite poset $P$ is bounded and locally semimodular. Then $P$ is shellable.
\end{lemma}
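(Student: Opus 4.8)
The statement to prove is the Björner–Wachs type result: a finite bounded locally semimodular poset $P$ is shellable. The plan is to invoke (and reprove in outline) the classical argument that locally semimodular bounded posets admit an \emph{EL-labeling} (edge lexicographic labeling), and that any poset admitting an EL-labeling is shellable. Since this is quoted as Theorem 6.1 of \cite{BjoernerJuly1980}, the proof here should really just be a pointer, but let me describe the argument one would write if it had to be self-contained.

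First I would recall the notion of an edge labeling: assign to each covering relation $\beta \lessdot \alpha$ in $P$ a label $\lambda(\beta,\alpha)$ in some fixed totally ordered set (e.g. $\mathbb{Z}$). A maximal chain in an interval $[u,v]$ is called \emph{rising} (or increasing) if the sequence of labels read along its covering relations, from bottom to top, is strictly increasing. The labeling is an \emph{EL-labeling} if in every closed interval $[u,v]$ there is a unique rising maximal chain, and this chain is lexicographically strictly smallest among all maximal chains of $[u,v]$. The first key step is to construct such a labeling from local semimodularity: working in the graded poset $P$, fix a maximal chain $\widehat{0} = c_0 \lessdot c_1 \lessdot \cdots \lessdot c_N = \widehat{1}$ as a "reference frame", and define, for a cover $\beta \lessdot \alpha$, the label to be the smallest index $i$ such that $\alpha$ lies in the join-sublattice-like structure generated by $\beta$ and $c_i$ — more precisely one uses the exchange property coming from semimodularity to show that covers out of any element $\mu$ can be consistently indexed so that the "new" element joined in at each step is uniquely determined. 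The semimodularity hypothesis is exactly what guarantees that when two covers $\alpha_1, \alpha_2 \gtrdot \mu$ exist, there is a common upper cover $\lambda$, so the diamond can be completed; iterating this "diamond-filling" upward in each interval $[u,v]$ produces, from any maximal chain, a unique rising refinement, establishing the EL property.

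Second, I would carry out the standard implication EL-shellable $\Rightarrow$ shellable: order the facets of $\Delta(P)$ (equivalently, the maximal chains of $P$) by the lexicographic order on their label sequences. One then checks that for the $k$-th facet $F_k$ in this order, the intersection $\bigl(\bigcup_{i<k}\{G \subseteq F_i\}\bigr)\cap \{G\subseteq F_k\}$ is generated by the facets $F_k \setminus \{x\}$ for those $x$ at which $F_k$ "fails to be rising locally" — i.e. where a lexicographically earlier chain agrees with $F_k$ outside a sub-interval. This uses the uniqueness of the rising chain in each subinterval to show the intersection subcomplex is pure of codimension one, which is precisely the shelling condition. The conventions on $\widetilde{C}^\bullet(u,v)$ and the fact that $P$ is pure (hence every maximal chain has the same length $N$) ensure all facets of $\Delta(P)$ have the same dimension, so the dimension bookkeeping in the shelling definition is uniform.

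The main obstacle is the construction of the EL-labeling itself from local semimodularity — specifically, verifying that the "diamond-filling" procedure is well-defined and terminates with a \emph{unique} rising chain in each interval, rather than merely producing \emph{some} rising chain. This is where the semimodular exchange axiom must be used carefully: one needs that whenever a maximal chain has a "descent" (two consecutive labels out of order), there is a canonical local move, confined to the relevant rank-two interval $[\mu,\lambda]$, that strictly decreases the chain lexicographically, and that these moves commute appropriately so the terminal object is independent of the order in which descents are resolved. Everything else — the passage from EL-labeling to shelling order, and the purity of the intersection subcomplexes — is routine once the labeling is in hand. Since the paper only needs this as a black box from \cite{BjoernerJuly1980}, in practice the "proof" is the one-line citation; the above is the sketch of what that citation contains.

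\begin{proof}
This is Theorem 6.1 of \cite{BjoernerJuly1980}; see also the discussion of EL-shellability there. We only indicate the idea. One uses local semimodularity to build an edge-lexicographic labeling of $P$: after fixing a reference maximal chain, each covering relation $\beta \lessdot \alpha$ receives a label determined, via the semimodular exchange property, by the "new" element joined in; semimodularity guarantees that diamonds can be completed, so that every maximal chain of every closed interval $[u,v]$ admits a unique lexicographically-least rising refinement. Ordering the maximal chains of $P$ — equivalently, the facets of $\Delta(P)$, which all have the same dimension since $P$ is pure — lexicographically by their label sequences then yields a shelling: for the $k$-th facet $F_k$, the subcomplex $\bigl(\bigcup_{1\leq i\leq k-1}\{G\subseteq F_i\}\bigr)\cap\{G\subseteq F_k\}$ is generated by the codimension-one faces $F_k\setminus\{x\}$ at the descents of $F_k$, hence is pure of dimension $\dim F_k - 1$, as required.
\end{proof}
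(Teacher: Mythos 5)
The paper gives no proof of this lemma; it is invoked purely as a citation to Björner's 1980 paper. Your proposal likewise defers to that citation, and the accompanying sketch of the lexicographic-shellability argument (construct a labeling of cover relations using a reference maximal chain plus the semimodular exchange property, order facets of $\Delta(P)$ lexicographically, verify the shelling condition via unique rising refinements) is a fair description of the circle of ideas behind Björner's theorem. One caveat worth flagging: you phrase the labeling as an edge-lexicographic (EL) labeling, where the label of a cover $\beta\lessdot\alpha$ depends only on the edge itself; but for posets that are merely locally semimodular (not lattices, so joins need not exist), the argument in the literature actually proceeds via labelings whose value on an edge may depend on the chain below it (what Björner--Wachs later systematized as CL-labelings / recursive atom orderings). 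Your own remark that one needs a ``join-sublattice-like structure'' and that the real obstacle is verifying the diamond-filling procedure is well-defined is precisely where this distinction lives, so the sketch is honest about the gap even if it doesn't resolve it. Since both you and the paper treat the lemma as a black box, the approaches agree, and your expansion of the citation is a useful supplement rather than a divergence.
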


\begin{lemma}
	If $P$ is shellable, then for all $\lambda \in P$, we have $\widetilde{H}^i(\widehat{0},\lambda) =0$ whenever $i< l(\lambda) -2$.
\end{lemma}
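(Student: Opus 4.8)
The plan is to deduce this vanishing statement directly from shellability, via the standard fact that a shellable poset (more precisely, its order complex) has the homotopy type of a wedge of spheres of the top dimension, together with the analogous statement applied to every closed interval. First I would recall the precise relationship between the cochain complex $\widetilde{C}^\bullet(\widehat 0,\lambda)$ introduced above and the reduced simplicial cohomology of the order complex $\Delta\big((\widehat 0,\lambda)\big)$ of the open interval: by the conventions set in the excerpt, $\widetilde H^i(\widehat 0,\lambda)$ is (a shift of) $\widetilde H^{i-1}\big(\Delta((\widehat 0,\lambda));\mathbb Z\big)$, where $(\widehat 0,\lambda)=\{t\in P:\widehat 0<t<\lambda\}$. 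Since $P$ is pure of length (say) $l(\lambda)$ from $\widehat 0$ to $\lambda$, the interval $[\widehat 0,\lambda]$ has all maximal chains of length $l(\lambda)$, so the order complex of the open interval $(\widehat 0,\lambda)$ is pure of dimension $l(\lambda)-2$.

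The key input is that shellability is inherited by intervals: if $P$ is shellable then so is every closed interval $[\widehat 0,\lambda]$, and a shellable simplicial complex of dimension $d$ is homotopy equivalent to a wedge of $d$-spheres (possibly empty, i.e. contractible). Applying this to $\Delta\big((\widehat 0,\lambda)\big)$ — whose dimension is $d=l(\lambda)-2$ — gives $\widetilde H^{j}\big(\Delta((\widehat 0,\lambda));\mathbb Z\big)=0$ for all $j\neq l(\lambda)-2$, hence in particular for $j<l(\lambda)-2$. Translating back through the degree shift in the definition of $\widetilde H^\bullet(\widehat 0,\lambda)$ yields $\widetilde H^i(\widehat 0,\lambda)=0$ for $i<l(\lambda)-2$, which is exactly the claim. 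I would also note the boundary cases handled by the stated conventions ($\lambda$ covers $\widehat 0$, or $\lambda=\widehat 0$) separately, where the statement is vacuous or immediate.

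The one genuine point requiring care — and the step I expect to be the main obstacle in writing this cleanly — is the bookkeeping of the degree conventions: the excerpt places $\mathbb Z$ in degree $-2$ when $u=v$ and in degree $-1$ when $v$ covers $u$, so that $\widetilde C^\bullet(\widehat 0,\lambda)$ is the reduced simplicial cochain complex of $\Delta((\widehat 0,\lambda))$ shifted so that a $k$-chain $\widehat 0<x_1<\cdots<x_k<\lambda$ (a $(k-1)$-simplex of the open interval) sits in cochain degree $k$. One must check that under this shift "top dimension $l(\lambda)-2$ of the simplicial complex" corresponds to "degree $l(\lambda)$" — indeed a maximal chain $\widehat 0<x_1<\cdots<x_{l(\lambda)-1}<\lambda$ has $l(\lambda)-1$ interior terms, is an $(l(\lambda)-2)$-simplex, and lands in cochain degree $l(\lambda)$ — so that vanishing below the top simplicial degree becomes vanishing for $i<l(\lambda)-2$ in the shifted indexing. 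Once this translation is pinned down, the proof is a one-line appeal to the wedge-of-spheres property of shellable complexes applied to each interval $[\widehat 0,\lambda]$.
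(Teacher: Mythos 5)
Your approach is the same one the paper implicitly relies on: the paper's proof is a bare citation to Section~4.1 of Wachs, and what is found there is exactly the chain of ideas you reconstruct --- shellability passes to closed intervals, a pure shellable complex is homotopy equivalent to a wedge of spheres in its top dimension, and $\Delta\big((\widehat{0},\lambda)\big)$ has dimension $l(\lambda)-2$. The one issue is your degree bookkeeping, which contains a slip: by the convention you state (a chain $\widehat{0}<x_1<\cdots<x_k<\lambda$ sits in cochain degree $k$), a maximal chain $\widehat{0}<x_1<\cdots<x_{l(\lambda)-1}<\lambda$ has $l(\lambda)-1$ interior terms and so lands in degree $l(\lambda)-1$, not $l(\lambda)$; and the shift $\widetilde{H}^i(\widehat{0},\lambda)\cong\widetilde{H}^{i-1}\big(\Delta((\widehat{0},\lambda))\big)$ would give vanishing for $i<l(\lambda)-1$, which is \emph{stronger} than the stated $i<l(\lambda)-2$, so your translation step as written does not actually reproduce the lemma's bound. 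Some of this confusion is inherited from the paper, whose displayed definition of $\widetilde{C}^k(u,v)$ (length $k+1$ chains) is off by one from the boundary conventions it then imposes (a cover relation $u\lessdot v$ would land in degree $0$, not the stated $-1$, and $u=v$ in degree $-1$, not $-2$); the consistent reading, forced by those conventions and by the sentence ``the cohomology $H^\bullet(u,v)$ is that of the corresponding order complex,'' is $\widetilde{H}^i(\widehat{0},\lambda)=\widetilde{H}^i\big(\Delta((\widehat{0},\lambda))\big)$ with no shift at all, under which the wedge-of-$(l(\lambda)-2)$-spheres fact gives the lemma directly. None of this affects the mathematical substance of your argument; just fix the arithmetic in the translation.
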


\begin{proof}
	For a proof, see Section 4.1 of ~\cite{Wachs2006}. 
	
		\hfill$\square$
\end{proof}

\noindent The next proposition is the key takeaway from this section, and forms the second crucial step in our proof of Theorem \ref{thmB} (see the proof outline on page 4). Recall the posets $\P^m_n$ defined in Section \ref{Pmn}.
\begin{proposition}\label{LocalSemiMod}
	 Let $m$ and $n$ be positive integers. Then $\P^m_n$ is locally semimodular for all $m$ and $n$ that satisfy $m\leq n$.
\end{proposition}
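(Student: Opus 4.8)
The plan is to reduce the claim to a concrete combinatorial statement about intervals in $\P_n$ and then to $\P^m_n$, and to verify semimodularity by an explicit "merging move" analysis. First I would unpack what a cover relation in $\P_n$ looks like. Since the partial order is by reverse inclusion of strata closures, and since the length function $l$ in \eqref{lengthFormula} computes codimension (by Proposition \ref{VBness}), a cover $\beta$ of $\mu$ is a minimal degeneration: it corresponds to raising the length by exactly $1$. Inspecting \eqref{lengthFormula}, $l(\lambda)=\sum(\lvert R(\lambda)_i\rvert+1)-\sum\lvert F(\lambda)_j\rvert-\lvert F(\lambda)\rvert$, a unit increase is achieved by exactly one of a short list of elementary moves: (i) two singleton blocks of $\rho_1$ that lie in the same $\rho_2$-sibling block merge into a length-$2$ block of $\rho_1$ (a $\{2\},\{2\}\rightsquigarrow\{3\}$ degeneration — this is the $T_{ij}$ type); (ii) two distinct $\rho_2$-sibling blocks, each containing points that are ramification points, merge into one (a $\{2\},\{2\}\rightsquigarrow\{2,2\}$ collision of branch points — the $D_{ij}$ type); (iii) a block of $R$ absorbs an adjacent point, lengthening an already-nonsimple ramification point; (iv) mixed variants where an $N$-point is pulled into an existing nonsimple block or sibling-block. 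I would catalogue these moves precisely, noting that each is determined by the pair of blocks being merged.

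Next I would prove semimodularity of each interval $[\mu,\nu]$ in $\P_n$ directly. Suppose $\alpha$ and $\beta$ both cover $\mu$; each is obtained from $\mu$ by one elementary merge $M_\alpha$, $M_\beta$ of the types above. The key observation is that two distinct such merges act on essentially disjoint "local data" — they touch different pairs of blocks — or, if they touch overlapping blocks, they can still be composed in either order. In the disjoint case one simply performs $M_\beta$ on $\alpha$ and $M_\alpha$ on $\beta$; both yield the same $\lambda$, and $l(\lambda)=l(\mu)+2$, so $\lambda$ covers both $\alpha$ and $\beta$. The overlapping cases (e.g. both merges involve one common singleton block, or a $T$-move and a $D$-move sharing a ramification point) require a short case check: one verifies that the composite degeneration still exists as a point of $\P_n$ — i.e. is realized by some $(a_1,\dots,a_n)\in\X_n$, which is where the hypothesis $m\le n$ (ensuring there are enough coordinates to realize the required ramification profile) enters — and that it raises the length by exactly $2$. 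This finiteness/realizability bookkeeping is essentially the content already used in Lemma \ref{admissibile}, so I would invoke that style of argument.

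Finally I would pass to the quotient $\P^m_n$. Since $\P^m_n=\P_n/\!\sim$ collapses everything of length $<m$ to $\widehat{0}$ and leaves the rest (with length shifted by $m$) untouched, an interval $[\mu,\nu]$ in $\P^m_n$ with $\mu\neq\widehat{0}$ is literally an interval in $\P_n$, so semimodularity there is immediate from the previous paragraph. The only intervals needing attention are those of the form $[\widehat{0},\nu]$: here $\widehat{0}$ represents the union of all small strata, and the elements covering $\widehat{0}$ in $\P^m_n$ are the images of the length-$m$ elements of $\P_n$. If $\alpha,\beta$ cover $\widehat{0}$ in $\P^m_n$, lift them to length-$m$ elements $\widetilde\alpha,\widetilde\beta$ of $\P_n$; one needs a common element above both of length $m+1$ in $\P_n$. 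This is not automatic from covering relations alone (since $\widetilde\alpha,\widetilde\beta$ need not be comparable to a common length-$m-1$ element), so the argument here is a direct construction: given any two ramification profiles of total length $m$, I would exhibit an explicit length-$(m+1)$ profile degenerating from both, again by an elementary merge applied to each, using $m\le n$ to guarantee realizability.

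\textbf{Main obstacle.} I expect the genuine difficulty to be exactly this last point — semimodularity at the bottom interval $[\widehat{0},\nu]$ of the quotient $\P^m_n$ — together with the overlapping-merge case analysis in $\P_n$: one must be careful that a "merge of two merges" is always itself a legitimate, length-additive element of the poset and not something that either fails to exist or jumps length by more than expected. Controlling this uniformly across the move types (i)–(iv), and confirming that the bound $m\le n$ is exactly what is needed for all the required degenerate configurations to be populated by honest tuples in $\X_n$, is where the real work lies; the rest is the routine translation between strata closures, the length formula \eqref{lengthFormula}, and cover relations.
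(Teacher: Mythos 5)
Your proposal takes a genuinely different route from the paper for the $\P_n$ case: you reason combinatorially, cataloguing cover relations as elementary "merge moves" and composing them, while the paper argues geometrically, realizing a maximal chain below $x$ as an irreducible component of an intersection $Z_1\cap\cdots\cap Z_m$ of the divisors $T_{ij},D_{ij}$ and then intersecting with the two extra divisors $Z_{m+1},Z'_{m+1}$ witnessing the covers $u,v$ to produce the common cover. Both arguments need some care about when the "merge of two merges" (equivalently, when the extra intersection) drops codimension by exactly one more, but they buy essentially the same thing. More importantly, you are right to flag the passage from $\P_n$ to $\P^m_n$ as the real difficulty: the paper dismisses it in one line ("every interval in $P^m_n$ is actually an interval in $P_n$"), and that claim is false precisely for the bottom interval $[\widehat{0},\nu]$, which your proposal correctly isolates.

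The gap in your proposal is in the proposed fix for that bottom interval. You suggest that, given two covers $\alpha,\beta$ of $\widehat{0}$ in $\P^m_n$ (i.e.\ two length-$m$ elements of $\P_n$), one can "exhibit an explicit length-$(m+1)$ profile degenerating from both." That is not true in general, and $m\le n$ does not help: if $\alpha$ and $\beta$ are supported on disjoint sets of coordinates (for instance $m=2$, $n=6$, $\alpha$ the stratum $a_1=a_2=a_3$ and $\beta$ the stratum $a_4=a_5=a_6$, both of length $2$, both inside $\nu=\{a_1=a_2=a_3,\ a_4=a_5=a_6\}$ of length $4$), then any $\lambda$ with $\lambda>\alpha$ and $\lambda>\beta$ must satisfy $S(\lambda)\subseteq\overline{S(\alpha)}\cap\overline{S(\beta)}$, which has codimension $2m$, so $l(\lambda)\ge 2m>m+1$. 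There is no common cover, and the interval $[\widehat{0},\nu]$ of $\P^2_6$ is not semimodular. So the "direct construction" step you propose cannot be carried out by any choice of elementary merge, and in fact this appears to be an obstruction to the Proposition itself for $m\ge 2$, not just to the particular proof strategy; the paper's own proof glosses over exactly this point. If you want to salvage the intended conclusion (the cohomology vanishing used later in Corollary~4.6 and the resolution of Lemma~5.1), you would need either a weaker combinatorial property than local semimodularity that still implies the relevant shellability/vanishing, or a separate direct argument for the bottom interval of $\P^m_n$ that does not rely on a length-$(m+1)$ join existing.
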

\begin{proof}
	For simplicity, we prove the statement for $m=1$ i.e. for $\P_n$. The exact argument works for  $m\geq 2$ since every interval in $P^m_n$ is actually an interval in $P_n$.
	
	So now, our goal is to show that $\P_n$ is locally semimodular.
	The statement is trivial for $n<2$. So, we assume $n\geq 2$.
	Let $[x,y]$ be an interval in $\P_n$. To prove the proposition we can safely assume $l([x,y]) \geq 2$, since otherwise, the statement is vacuously true. It suffices to show that if $u$ and $v$ cover $x$ then there exists $t \in \P_n$ such that $t \leq y$ and $t$ covers both $u$ and $v$.
	
	If $u$ and $v$ cover $x$ then $$\overline{S(x)} \supset S(u) \cup S(v)$$ Also, let $l(x) = m$, so $l(u) = l(v) = m+1$ since $\P_n$ is a graded pure poset. Consider a maximal chain in $[\widehat{0},x]$. Suppose $$\overline{S(x)}\subset Z_1 \cap \cdots \cap Z_m$$ i.e. $\overline{S(x)}$ is an irreducible component of $Z_1 \cap \cdots \cap Z_m$, where, for each $k$, we have $Z_k  =D_{ij}$ or $Z_k = T_{ij}$ for some $i,j$ .  
	Since $u \neq v$ there exist two distinct divisors, let's call them $Z_{m+1}$ and $Z'_{m+1}$ such that \begin{gather*}
	\overline{S(x)} \cap Z_{m+1} \supset S(u), \hspace{5mm}
	\overline{S(x)} \cap Z'_{m+1} \supset S(v),\\
	S(v) \subsetneq\overline{S(x)} \cap Z_{m+1}\hspace{3mm}\text{and}\hspace{3mm}
	S(u) \subsetneq \overline{S(x)} \cap Z'_{m+1}.
	\end{gather*}  This forces $\bigcap\limits_{1\leq k \leq p}Z_k \cap Z_{m+1} \cap Z'_{m+1}$ to have codimension $m+2$, and to have a component whose generic point gives rise to an element $\P_n$, say $t$, such that $t$ covers $u$ and $v$ and such that $\overline{S(t)} \supseteq S(y)$.
	
	\hfill$\square$
\end{proof}

\begin{remark}
	The intersections of $D_{ij}$ and $T_{ij}$ for various values of $i$ and $j$ are not always irreducible. In combinatorial language, one  says "$\P_n$ doesn't admit  meets, and  joins." 
\end{remark}
\begin{cor}\label{semimodular}
	Let $n$ and $m$ be positive integers that satisfy $m<n$. Then for all $\lambda \in \P^m_n$ we have $\widetilde{H}^i(\widehat{0},\lambda) =0$ whenever $i< l^{m}(\lambda) -2$.
\end{cor}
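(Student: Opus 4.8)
\textbf{Proof proposal for Corollary \ref{semimodular}.}

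The plan is to deduce the corollary directly by chaining together the three results immediately preceding it: Proposition \ref{LocalSemiMod}, the Bj\"orner theorem (Theorem 6.1 of \cite{BjoernerJuly1980}, quoted as a lemma above), and the vanishing lemma for shellable posets (from Section 4.1 of \cite{Wachs2006}). First I would invoke Proposition \ref{LocalSemiMod}, which under the hypothesis $m \leq n$ tells us that $\P^m_n$ is locally semimodular; since $\P^m_n$ is also finite and bounded (it has the least element $\widehat 0$ and greatest element $\widehat 1$ inherited from $\P_n$, as noted in Section \ref{Pmn}), the Bj\"orner lemma applies and $\P^m_n$ is shellable.

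Next I would pass to intervals. Being shellable is not obviously inherited by arbitrary closed intervals of a general poset, so the cleanest route is to observe — exactly as in the proof of Proposition \ref{LocalSemiMod} — that every interval $[\widehat 0, \lambda]$ in $\P^m_n$ is itself bounded (trivially) and locally semimodular, because local semimodularity is defined via all subintervals and hence is inherited by $[\widehat 0,\lambda]$. Applying the Bj\"orner lemma to $[\widehat 0,\lambda]$ shows that each such interval is shellable. Then the shellability vanishing lemma, applied to the poset $[\widehat 0,\lambda]$ (whose ``top'' length is $l^m(\lambda)$ by the definition of $l^m$ in Section \ref{Pmn}), yields $\widetilde H^i(\widehat 0,\lambda) = 0$ for $i < l^m(\lambda) - 2$, which is precisely the assertion. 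One small bookkeeping point to check is that the length function $l^m$ on $\P^m_n$ agrees with the intrinsic rank function of $[\widehat 0,\lambda]$ used implicitly in the vanishing lemma; this is immediate from the fact that $\P^m_n$ is graded with grading $l^m$ (it inherits purity from $\P_n$ after collapsing the bottom $m$ levels), so the rank of $\lambda$ inside $[\widehat 0,\lambda]$ is exactly $l^m(\lambda)$.

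The only genuine subtlety — and the step I would flag as the main obstacle — is verifying that the hypothesis $m \leq n$ in Proposition \ref{LocalSemiMod} can be strengthened (or is already compatible) with the hypothesis $m < n$ stated in the corollary; since $m < n$ implies $m \leq n$, there is in fact nothing to reconcile, but I would double-check that the edge case $m = n$ is not silently needed elsewhere and that the collapsed poset $\P^m_n$ is genuinely nontrivial (i.e. $\widehat 0 \neq \widehat 1$) in the stated range, so that the cohomology statement is not vacuous in a misleading way. Apart from that, the argument is a formal concatenation of the quoted lemmas, and I would keep the write-up to a few lines rather than reproving shellability.
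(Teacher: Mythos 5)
Your argument is correct and follows essentially the same route as the paper: Proposition \ref{LocalSemiMod} gives local semimodularity, Bj\"orner's theorem (the first Lemma of Section \ref{Section3}) upgrades this to shellability, and the vanishing lemma for shellable posets (the second Lemma) yields the conclusion. The one place where you do more work than necessary is your intermediate step passing to intervals $[\widehat 0, \lambda]$ and re-deriving their shellability: the vanishing lemma as stated in the paper is already quantified over \emph{all} $\lambda \in P$ once the ambient poset $P$ is shellable, so it can be applied directly to $P = \P^m_n$ without first arguing that each interval is shellable. (Your concern that shellability might not descend to closed intervals is in any case unfounded — Bj\"orner–Wachs prove that closed intervals of shellable posets are shellable — but the point is moot here since the quoted lemma already absorbs this.) The bookkeeping check you flag about $l^m$ agreeing with the intrinsic rank of $[\widehat 0,\lambda]$ is the right thing to verify, and your justification via gradedness of $\P^m_n$ is sound. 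Overall your writeup is a modest elaboration of the paper's three-line proof, which reads ``Use Proposition \ref{LocalSemiMod}, Lemma 4.4 and Lemma 4.5, in that order.''
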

\begin{proof}
	Use Proposition ~\ref{LocalSemiMod}, Lemma 4.4 and Lemma 4.5, in that order. \hfill$\square$
\end{proof}
\section{Geometry of the (Zariski) closure of the locally closed strata}\label{Section4}
The components of $\X_n - \U^m_n$, for  $m<n$, are quite far from "nice": they are singular, they don't intersect transversally, etc. For example, when $n> 3$ the homogeneous equations cutting out the divisors $D_{ij}$ are of degree $n-2$, and thus have no linear part.  Therefore, the divisors $D_{ij}$ are not smooth at the origin. But that is not too much of a problem-- the closed strata in $\X_n$, given by  ramification types, have quotient singularities when their codimension $\ll n$, which make them quite tractable.
 The purpose of this section is twofold: given $m\geq 1$ and $n\geq 3m$; and $\lambda\in \P_n$ such that $l(\lambda) = m$, \begin{enumerate}
	\item check that $S(\lambda)$ is non-empty, and
	\item prove that the quotient $X(\lambda)/\mathfrak{S}_{\lvert N(\lambda)\rvert}$ is isomorphic to an affine space.
\end{enumerate}
We address the second problem first. 

\begin{proposition}\label{VBness}
	For $\lambda \in \P_n$, let $X(\lambda) := \overline{S(\lambda)}$ and let $N_0:= n- \sum_{i}\lvert R(\lambda)_i\rvert + \lvert F(\lambda)\rvert $. Then $$X(\lambda)/\mathfrak{S}_{\lvert N(\lambda) \rvert}\xrightarrow{\cong} \mathbb{A}^{\lvert R(\lambda) \rvert+N_0}$$ whenever $N_0\geq 0$.
\end{proposition}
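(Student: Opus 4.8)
The plan is to make the isomorphism completely explicit, and to that end I first choose coordinates on $X(\lambda)/\mathfrak{S}_{|N(\lambda)|}$. After reordering $\{1,\dots,n\}$, write $N(\lambda)=\{1,\dots,p\}$ with $p=|N(\lambda)|$; the remaining indices split into the $R$-blocks (one per non-simply-branched ramification point, of size the differential length $s_k$, $k=1,\dots,q$, $q=|R(\lambda)|$), which in turn are grouped into $F$-blocks $F_1,\dots,F_r$, $r=|F(\lambda)|$. A point of $X(\lambda)/\mathfrak{S}_{|N(\lambda)|}$ is then the datum of a monic polynomial $g$ of degree $p$ (the elementary symmetric functions of the simple ramification points, i.e.\ a point of $\mathbb{A}^p$) together with the locations $v_1,\dots,v_q\in\mathbb{A}^1$ of the non-simply-branched ramification points. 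Setting $h_{\vec v}(x):=\prod_k (x-v_k)^{s_k}$ and $\phi_{g,\vec v}:=\mathcal{I}(g\cdot h_{\vec v})\in\mathrm{M}_n$, the stratum $S(\lambda)$ is the locus (inside this $\mathbb{A}^{p+q}$) where the $v_k$'s and the roots of $g$ are pairwise distinct, $\phi_{g,\vec v}$ is simply branched at each root of $g$, and the only coincidences among $\phi$-values are the sibling ones $\phi_{g,\vec v}(v_k)=\phi_{g,\vec v}(v_{k'})$ for $k,k'$ in the same $F$-block. I would clean each such sibling equation by dividing out the appropriate power of $(v_k-v_{k'})$ (reorganizing the siblings of a given $F$-block into a chain $\phi(w_1)=\phi(w_2),\dots,\phi(w_{t-1})=\phi(w_t)$ if convenient) so that the resulting equation is not identically zero on the small diagonals, and let $V\subseteq\mathbb{A}^{p+q}$ be their common zero locus. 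Then $S(\lambda)$ is visibly a dense open subset of $V$, so once we know $V$ is irreducible it will follow that $X(\lambda)/\mathfrak{S}_{|N(\lambda)|}=V$; irreducibility will drop out of the structure theorem below, and nonemptiness of $S(\lambda)$ for $n\ge 3m$ is the companion claim of this section. Note that $|R(\lambda)|+N_0$ equals $\dim X(\lambda)=|N(\lambda)|+|F(\lambda)|$, so $\mathbb{A}^{|R(\lambda)|+N_0}$ is the target of the right dimension.

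\textbf{The engine.} The key observation is that $f=g\cdot h_{\vec v}$ is \emph{linear} in the coefficients $c_1,\dots,c_p$ of $g$, hence so is $\phi_{g,\vec v}=\mathcal{I}(f)$ and so is every value $\phi_{g,\vec v}(v_k)$; thus each cleaned sibling equation is affine-linear in $(c_1,\dots,c_p)$ with coefficients polynomial in $\vec v$. After the substitution $s=v_{k'}+(v_k-v_{k'})u$ a cleaned equation becomes an integral $\int_0^1(\text{polynomial in }u)\,du$ in which the coefficient of a suitably chosen pivot coordinate of $g$ is a Beta-type rational $\int_0^1 u^a(1-u)^b\,du=\tfrac{a!\,b!}{(a+b+1)!}$ with $a+b\le n$; these are precisely the numbers invertible in $\K$ under the hypothesis $\operatorname{char}\K=0$ or $\operatorname{char}\K>n+1$. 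I would then present $V$ as the total space of a vector bundle of constant rank $N_0$ over the affine space $\mathbb{A}^{|F(\lambda)|}$ of $F$-block branch values (the fibre over a point being the affine subspace of admissible $g$'s, cut out by genuinely affine-linear equations once $\vec v$ is pinned down): since $n\ge 3m$ forces $N_0\ge 0$, i.e.\ there are at least as many coefficients of $g$ as independent sibling constraints, the rank is the nonnegative integer $N_0$ uniformly. Quillen--Suslin (algebraic vector bundles over affine space are trivial) then gives $V\cong\mathbb{A}^{|F(\lambda)|}\times\mathbb{A}^{N_0}=\mathbb{A}^{|R(\lambda)|+N_0}$, which is "VBness". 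An equivalent hands-on packaging: order the $|R(\lambda)|-|F(\lambda)|$ cleaned equations and assign to each a distinct pivot among $c_1,\dots,c_p$, so that the self-map of $\mathbb{A}^{p+q}$ replacing those $c_i$ by the left-hand sides of the equations (and fixing all other coordinates) is a triangular, hence invertible, automorphism; then $V$ is the preimage of a coordinate affine subspace.

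\textbf{The obstacle.} The technical heart is the uniform behaviour of these linear systems as $\vec v$ degenerates. The coefficients of a cleaned sibling equation, as a linear form in $(c_1,\dots,c_p)$, do depend on $\vec v$, and they can drop rank when several $v_k$'s collide — or when a pivot coefficient such as $\int_0^1 u^a(1-u)^b\,du$ gets multiplied by a vanishing difference coming from a leftover factor of $h_{\vec v}$, which is exactly what occurs for $F$-blocks with three or more sub-blocks. One must therefore either show that this degeneration happens in high enough codimension that $V$ remains irreducible of dimension $|N(\lambda)|+|F(\lambda)|$ and the generic fibre structure propagates, or — cleaner — peel the non-simply-branched branch points off one $F$-block at a time, and within each block treat its $k_\ell-1$ sibling relations successively, choosing the pivots (low-order Taylor coefficients of $g$ around the relevant $v_k$) so that the cumulative change of coordinates is \emph{literally} a triangular automorphism of affine space. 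Carrying out this bookkeeping for an arbitrary ramification type $\lambda$, and verifying that the single inequality $n\ge 3m$ underwrites all of them at once, is where the extremal estimate from the proof of Lemma~\ref{admissibile} (maximizing the clustering of ramification at fixed total length) re-enters; the same explicit description simultaneously yields $S(\lambda)\neq\emptyset$ for $n\ge 3m$.
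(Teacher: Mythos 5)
Your overall strategy matches the paper's: identify $X(\lambda)/\mathfrak{S}_{\lvert N(\lambda)\rvert}$ with the data of the non-simply-branched ramification positions $\vec v$ together with a monic polynomial $g$ of degree $\lvert N(\lambda)\rvert$, observe that the sibling constraints are linear in the coefficients of $g$ once $\vec v$ is fixed, conclude that $X(\lambda)/\mathfrak{S}_{\lvert N(\lambda)\rvert}$ is the total space of an affine-space bundle over a coordinate affine space, and invoke Quillen--Suslin. Two issues, one small and one substantive.

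The small issue: you describe the bundle as living over $\mathbb{A}^{\lvert F(\lambda)\rvert}$ (the branch values) with fibre of rank $N_0$; but then $\lvert F(\lambda)\rvert + N_0 \neq \lvert R(\lambda)\rvert + N_0$ unless every $F$-block is a singleton. Your own parenthetical ("cut out by genuinely affine-linear equations once $\vec v$ is pinned down") indicates the base you actually need is the position space $\mathbb{A}^{\lvert R(\lambda)\rvert}$, which is what the paper uses (denoted $\mathbb{A}^{\sum k_j}$). The branch values $c_j$ are determined by $(\vec v, g)$ and are not free parameters of the base.

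The substantive issue is the one you yourself label "the obstacle": you have not shown that the rank of the sibling linear system in the coefficients of $g$ is constant as $\vec v$ varies over all of $\mathbb{A}^{\lvert R(\lambda)\rvert}$, including the small diagonals. You correctly diagnose that the Beta-function pivot can be multiplied by a vanishing factor from $h_{\vec v}$ when three or more sub-blocks of one $F$-block collide, and then you offer two possible fixes — a codimension estimate, or a triangular peeling — without carrying out either. This is not a detail; constancy of fibre dimension over \emph{all} of the base (not just a dense open) is precisely what is needed to get a vector bundle rather than merely a birational model, and a generic-rank statement would not suffice. The paper's resolution is different from both of your sketches and is worth absorbing: it embeds $\mathcal{Z}(\lambda)$ into the trivial bundle $\mathcal{E}(\lambda)$ obtained by forgetting the sibling constraints, writes for each $F$-block the identity $\mathcal{I}(f)(x) - c_j = \prod_i (x - a^i_j)^{e^i_j}\, h_j(x)$ (which is where the hypothesis $\operatorname{char}\K = 0$ or $\operatorname{char}\K > n+1$ enters), differentiates, and identifies the fibre of $\varphi$ with $\bigcap_j V_j$, where $V_j$ is the affine subspace spanned by the coefficients of $h_j$. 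Counting codimensions of the $V_j$ gives an upper bound on the codimension of the fibre that holds for every $\vec v$, and the matching lower bound follows from upper-semicontinuity of fibre dimension applied at the single totally degenerate point $\vec v = (a,\ldots,a)$, where the computation collapses to your Case $R(\lambda) = F(\lambda)$. This sidesteps any point-by-point tracking of pivots, which is the part your proposal leaves open.
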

\begin{proof}
	What we will actually show is $$X(\lambda)/\mathfrak{S}_{\lvert N(\lambda) \rvert} \longrightarrow \mathbb{A}^{\lvert R(\lambda) \rvert}$$ is an affine space bundle with fibres isomorphic to $\A^{N_0}$. Once we prove this, the statement of the proposition is then a direct consequence of the Quillen-Suslin theorem (a.k.a Serre's conjecture, see e.g., \cite[ Theorem 3.7, Chapter XXI]{Lang2002}) which states that finite projective modules over polynomial rings over a field are free. 
	For the sake of simplicity, we consider three cases; the first two will just turn out to be special cases of the third one. 

	\noindent \textbf{Case 1:}
	We prove the proposition for those $\lambda \in \P_n$ for which the polynomials in $S(\lambda)$ have no more than one ramification point in each fibre i.e. $R(\lambda) = F(\lambda)$. We continue with the notation from \eqref{FR}, except, for convenience, we write $e_j:= e^i_j$ since $i=1$ for each $1\leq j\leq r$.  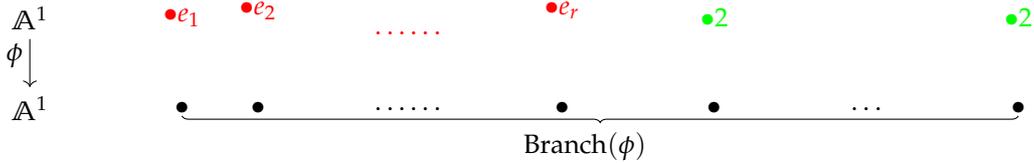
\begin{figure}[H]
		\centering
		\begin{tikzpicture}
		\node at (0,-1.3) {\color{red}$\bullet$$e_1$}; 
		\node at (1,-1.2) {\color{red}$\bullet$$e_2$};             
		\node (A1) at (-2,-1.3) {$\A^1$};            
		\node (A2) at (-2,-2.5) {$\A^1$};            
		\draw [->] (A1)-- (A2);                         
	                   
		\node at (-2.2,-1.8) {$\phi$};                        \node at (5,-2.5) {$\bullet$};    \node at (7,-2.5) {$\bullet$};    \node at (9,-2.5) {$\ldots$}; \node at (11,-2.5) {$\bullet$};
	                     \node at (7,-1.3) {\color{green}$\bullet 2$}; 
		\node at (11,-1.3) {\color{green}$\bullet 2$};
		\node at (0,-2.5) {$\bullet$};      
		\node at (3,-2.5) {$\dots\dots$};    \node at (3,-1.5) {\color{red}$\dots\dots$};                 
		\node at (1,-2.5) {$\bullet$};                      	\node at (5,-1.2) {\color{red}$\bullet$$e_r$}; 	\node at (5.3,-3) {$\Br(\phi)$};	\draw [
		decoration={
			brace,
			mirror,
			raise=0.1cm
		},
		decorate
		] (0,-2.5) -- (11,-2.5) ;
		\end{tikzpicture}\caption{The above diagram is an example of a morphism in $\I_\circ\pi(S(\lambda))$ satisfying $R(\lambda) = F(\lambda)$ or equivalently, $B_b(\phi)$ is a singleton for all $b\in\Br(\phi)$.}\label{Fig4}
	\end{figure}	Define \begin{equation}
	\begin{gathered}
	\begin{split}
	\mathcal{Z}(\lambda) := \bigg\{ ((a_{1}, \ldots, a_{r}), f): f \in M'_n,\, f(x) = (x-a_{1})^{e_{1} - 1} \ldots (x-a_{r})^{e_{r} - 1} g(x), \\ g(x) \hspace{2mm}\text{monic of degree} \hspace{2mm}  n- \sum_{i=1}^{r}(e_j -1) \bigg\}
	\end{split}
	\end{gathered}
	\end{equation} 
	First, note that there is a natural surjective morphism $$X(\lambda) \twoheadrightarrow X(\lambda)/\mathfrak{S}_{\lvert N(\lambda)\rvert}$$ is given by keeping the coordinates indexed by $\{1,\ldots,n\} -N(\lambda)$  fixed, while the coordinates indexed by $N(\lambda)$ map to the corresponding elementary symmetric polynomials in $\lvert N(\lambda)\rvert$ variables. The coordinates indexed by $\{1,\ldots,n\} -N(\lambda)$ has repetitions, indexed precisely by $R(\lambda)$. Forgetting the repetitions show that \begin{equation}
	\begin{gathered}
	\Phi: X(\lambda)/\mathfrak{S}_{\lvert N(\lambda)\rvert} \xrightarrow{\cong} \Z(\lambda)
	\end{gathered}\label{isom7}
	\end{equation}
	Now, let $N : = \lvert N(\lambda) \rvert = n- \sum_{i=1}^{r}(e_j -1)$, and define a morphism \begin{equation}
	\begin{gathered}
	\Psi: \A^r \times A^{N} \longrightarrow \Z(\lambda)\\
\Big((a_1,\ldots, a_r), (s_1,\ldots, s_{N})\Big) \mapsto\Bigg( (a_1,\ldots, a_r), \Big((x-a_{1})^{e_{1} - 1} \ldots (x-a_{r})^{e_{r} - 1} (x^N+ s_1x^{N-1}+ \ldots+ s_N)\Big)\Bigg)
	\end{gathered}\label{isom8}
	\end{equation} is clearly an isomorphism.
	In conclusion, $$\Psi^{-1}\circ \Phi: X(\lambda)/\mathfrak{S}_{\lvert N(\lambda)\rvert} \to \A^r \times A^{N} $$ is an isomorphism, and if 
	$$\varphi: \Z(\lambda) \longrightarrow \A^r$$ denotes the projection to the first $r$ coordinates, then $\Z(\lambda)$ is a trivial $\A^N$-bundle over $A^r$, thus completing the proof of  Proposition \ref{VBness} for Case 1.
	
	\vspace{4mm}
	
	\noindent\textbf{Case 2:} 	Let $\lambda$ be such that $F(\lambda)$ is a singleton. So, following the notation from \eqref{FR}, we have $r=1$. Letting $k: = k_1$, and $e^i = e^i_j$ since $j$ can only be $1$, we have $F(\lambda) = \bigsqcup\limits_{1\leq i\leq k} R(\lambda)_i$. 	\begin{figure}[H]
		\centering
		\begin{tikzpicture}
		\node at (4,0.8) {\color{red}$\bullet$$e^1$}; 
		\node at (4,0) {\color{red}$\bullet$$e^2$}; 
		\node at (4,-0.5) {\color{red}$\vdots$};               
		\node (A1) at (1.5,-0.3) {$\A^1$};            
		\node (A2) at (1.5,-2.5) {$\A^1$};            
		\draw [->] (A1)-- (A2);                         
		\node at (4,-1) {\color{red}$\bullet$$e^{k}$};                    
		\node at (1.3,-1.5) {$\phi$};                          \node at (7,-2.5) {$\bullet$}; \node at (9,-2.5) {$\ldots$};    \node at (11,-2.5) {$\bullet$};
	  \node at (7,-0.7) {\color{green}$\bullet 2$};  \node at (9,-0.7) {\color{green}$\ldots$}; 
		\node at (11,-0.7) {\color{green}$\bullet 2$};
		\node at (4,-2.5) {$\bullet $};             
		\node at (7.2,-3) {$\Br(\phi)$};	\draw [
		decoration={
			brace,
			mirror,
			raise=0.1cm
		},
		decorate
		] (4,-2.5) -- (11,-2.5) ;
		\end{tikzpicture}\caption{The above diagram characterises $\lambda\in \P_n$ such that for any morphism $\phi\in \I_\circ\pi(S(\lambda))$, we have that $B_b(\phi)=\{2\}$ for all branch points $b\in \Br(\phi)$  but one.}
	\end{figure}
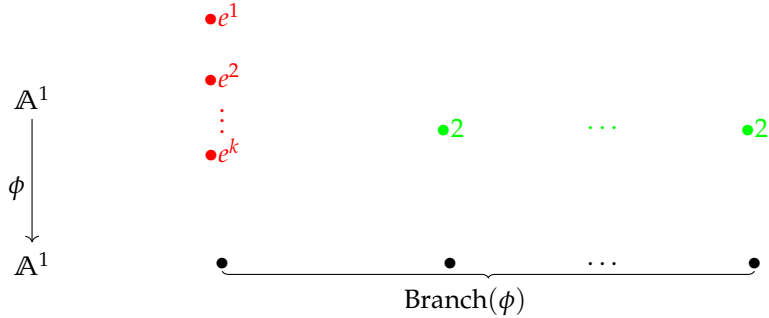 \noindent Define \begin{equation}
	\begin{aligned}\label{Z2}
	\Z(\lambda) := \bigg\{ \big((a_{1}, \ldots, a_{k}, f\big):  f \in M'_n,\, f(x) = (x-a_{1})^{e^{1} - 1} \ldots (x-a_{k})^{e^{k} - 1} g(x), \\ \I(f) (a_{1}) = \I(f) (a_{j}), j = 2, \ldots, k  \\ g(x) \hspace{2mm}\text{monic of degree} \hspace{2mm}  n- \sum\limits_{1\leq i \leq k}(e^i -1)\bigg\}
	\end{aligned} 
	\end{equation} and let $$\varphi: \Z(\lambda) \to \A^k$$ denote the projection to the first $k$ coordinates.
	
	The proof of ~\eqref{isom7} from Case 1 carries over verbatim to Case 2, and we have an isomorphism: $$\Phi: \Z(\lambda) \overset{\cong}\longrightarrow X(\lambda)/\mathfrak{S}_{\lvert N(\lambda)\rvert}$$
	We have only to show that $\varphi: \Z(\lambda) \to \mathbb{A}^{k}$,  is a fibre bundle with fibres isomorphic to  $\A^{n- \sum_i(e^i -1)-(k-1)}$. 
	This fact was obvious in Case 1, but requires some extra work for Case 2, which we explain now.
	As in the proof of Case 1, we have an affine space bundle over $\mathbb{A}^k$ defined by  \begin{align*}
	\E(\lambda) := \bigg\{ \big(a_{1}, \ldots, a_{k}, f\big):  f \in M'_n,\, f(x) = (x-a_{1})^{e^{1} - 1} \ldots (x-a_{k})^{e^{k} - 1} g(x), \\ g(x) \hspace{2mm}\text{monic of degree} \hspace{2mm}  n- \sum\limits_{1\leq i \leq k}(e^i -1) \bigg\}.
	\end{align*}  Let $$\widetilde{\varphi}: \E(\lambda) \to \A^k$$denote the projection to the first $k$ coordinates. Clearly, the fibres of $\widetilde{\varphi}$ are spanned by the coefficients of $g(x)$, and we have
	$$\widetilde{\varphi}^{-1}\Big((a_1,\ldots,a_k)\Big)  \cong \A^{n-\sum\limits_{1\leq i \leq k}(e^i-1)}$$ 
	Similar to  ~\eqref{isom8}, if $N: =n- \sum\limits_{1\leq i \leq k}(e^i -1)$ we have an isomorphism  \begin{equation}
	\begin{gathered}
	\widetilde{\Psi}: \A^k \times A^{N} \longrightarrow \E(\lambda)\\
\Big((a_1,\ldots, a_k), (s_1,\ldots, s_{N})\Big) \mapsto \Bigg((a_1,\ldots, a_k), \Big((x-a_{1})^{e^{1} - 1} \ldots (x-a_{k})^{e^{k} - 1} (x^N+ s_1x^{N-1}+ \ldots+ s_N)\Big)\Bigg)
	\end{gathered}\label{isom9}
	\end{equation}  and the following diagram commutes \[\begin{tikzcd}
	\Z(\lambda) \arrow[hookrightarrow]{r} \arrow[twoheadrightarrow]{dr}{\varphi} & \E(\lambda) \arrow[twoheadrightarrow]{d}{\widetilde{\varphi}} \\
	& \A^{k}
	\end{tikzcd}
	\]  Since for any $(a_1,\ldots, a_k) \in A^k$, the affine space $\varphi^{-1}(a_1,\ldots, a_k)$ is a linear subspace of $\widetilde{\varphi}^{-1}(a_1,\ldots, a_k)$, to prove Proposition \ref{VBness}  for Case 2, it suffices to show that the fibres of $\varphi$ have constant dimension.
	
	\noindent To this end, write $$\I(f)(x)-c = (x-a_{1})^{e^{1}}\ldots (x-a_{k})^{e^{k}} h(x)$$ for some $c \in \mathbb{A}^1$ and some monic polynomial $h(x)$ of degree $n+1-\sum e^i$. Then, taking derivatives, we obtain: \begin{align*}
	f(x) = (x-a_1)^{e^1-1} \ldots (x-a_k)^{e^k-1} \Big((x-a_1)\ldots(x-a_k)h'(x) +\\ e^1(\widehat{x-a_1})(x-a_2)\ldots(x-a_k)h(x) + \\ e^2(x-a_1) (\widehat{x-a_2})\ldots(x-a_k)h(x) + \\e^k(x-a_1)\ldots (x-a_{k-1}) (\widehat{x-a_k}) \Big)
	\end{align*}
	where $(\widehat{x-a_j})$ signifies that that factor is removed. Comparing with the  expression for $f(x)$ in \eqref{Z2} we obtain: \begin{align*}
	g(x) = (x-a_1)\ldots(x-a_k)h'(x) + e^1(\widehat{x-a_1})(x-a_2)\ldots(x-a_k)h(x) + \\ e^2(x-a_1) (\widehat{x-a_2})\ldots(x-a_k)h(x) + \\e^k(x-a_1)\ldots (x-a_{k-1}) (\widehat{x-a_k}) 
	\end{align*} and we see that the coefficients of $h(x)$ span a linear subspace, of dimension ${n+1-\sum e^i}$, of the affine space generated by the coefficients of $g(x)$. Therefore, for any $(a_1,\ldots,a_k)\in\A^k$ we have that  $$\varphi^{-1}\Big((a_1,\ldots,a_k)\Big)\cong \A^{n+1-\sum e^i}.$$ This completes the proof of  Proposition \ref{VBness} for Case 2.
	
	\vspace{4mm}
	
	\noindent\textbf{Case 3:} Finally, the general case, as depicted in Figure \ref{Fig3}. The proof resembles that of Case 2 very closely, but we nevertheless try to be as explicit  possible for the sake of clarity.
	As before, let $\lambda\in \P_n$. We follow the notations set in \eqref{FR}, \Cref{Section2.2}, which we recollect here for convenience. Let \begin{enumerate}
		\item $F(\lambda) = \{F(\lambda)_1, \ldots, F(\lambda)_r\}$, so $\lvert F(\lambda)\rvert =r$
		\item  $F(\lambda)_j = \bigsqcup\limits_{1\leq i\leq k_j}R(\lambda)^{i}_j$ 
		\item $\lvert R(\lambda)^{i}_j\rvert = e^i_j-1$ and $\lvert F(\lambda)_j \rvert= k_j $
	\end{enumerate}  In other words, if $(a_1,\ldots, a_n)\in \rho^{-1}(\lambda) \subset \X_n$, then by definition \ref{ramdata}, $\I(\pi(a_1,\ldots,a_n))$ is a polynomial satisfying the following: for all branch points $b\in\I(\pi(a_1,\ldots,a_n))$ that are not simple, $Ram_b(\I(\pi(a_1,\ldots,a_n)))\in \Sym^{l(B_b(\I(\pi(a_1,\ldots,a_n))))}\A^1$ is given by (see \eqref{FR}): \begin{align*}
Ram_b(\I(\pi(a_1,\ldots,a_n))) = \Big(\underbrace{a^{1},\ldots, a^{1}}_{e^1-1}, \ldots,\underbrace{a^i,\ldots, a^i}_{e^i-1}\Big). 
\end{align*}where $\{a^1,\ldots,a^i\} \in Ram(\I(\pi(a_1,\ldots,a_n)))$ and $B_b(\I(\pi(a_1,\ldots,a_n))) = \{e^1,\ldots, e^k\}.$
 Define $\Z(\lambda)$ the same way as in Case 2, namely \begin{equation}
	\begin{aligned}
	\Z(\lambda) := \bigg\{ \big((a^1_{1}, \ldots, a^{k_1}_1), \ldots, (a^1_j,\ldots, a^{k_j}_j), \ldots, (a^1_r, \ldots, a^{k_r}_r), f\big):  f \in M'_n,\\ f(x) = \prod\limits_{1\leq i\leq k_1}(x-a^i_{1})^{e^i_{1} - 1} \ldots \prod\limits_{1\leq i\leq k_r}(x-a^i_{r})^{e^i_{r} - 1} g(x), \\ \I(f) (a^i_{j}) = \I(f) (a^1_{j}), \,\, 2\leq i\leq k_j, \,\,1\leq j\leq r  ,\\ g(x) \hspace{2mm}\text{monic of degree} \hspace{2mm}  n- \sum\limits_{\substack{1\leq i\leq k_j\\1\leq j \leq r}}(e^i_j -1)\bigg\}
	\end{aligned}\label{Z3} 
	\end{equation}and  let $$\varphi: \Z(\lambda) \to \A^{\sum k_j}$$ denote the projection to the first $\sum k_j$ coordinates. Similarly, define \begin{equation}
	\begin{aligned}
	\E(\lambda) :=  \bigg\{ \big((a^1_{1}, \ldots, a^{k_1}_1), \ldots, (a^1_j,\ldots, a^{k_j}_j), \ldots, (a^1_r, \ldots, a^{k_r}_r), f\big):  f \in M'_n,\\ f(x) = \prod\limits_{1\leq i\leq k_1}(x-a^i_{1})^{e^i_{1} - 1} \ldots \prod\limits_{1\leq i\leq k_r}(x-a^i_{r})^{e^i_{r} - 1} g(x) ,\\ g(x) \hspace{2mm}\text{monic of degree} \hspace{2mm}  n- \sum\limits_{\substack{1\leq i\leq k_j\\1\leq j \leq r}}(e^i_j -1)\bigg\}
	\end{aligned}\label{E3}
	\end{equation}and let $$\widetilde{\varphi}: \E(\lambda) \to \A^{\sum k_j}$$denote the projection to the first $\sum k_j$ coordinates. Clearly, the fibres of $\widetilde{\varphi}$ are generated by the coefficients of $g(x)$, and we have
	$$\widetilde{\varphi}^{-1}\Big((a_1,\ldots,a_k)\Big)  \cong \A^{n-\sum\limits_{1\leq i \leq k}(e^i-1)}$$ 
	In fact, the isomorphism in \eqref{isom9} carries over verbatim, just with $k$ replaced by $\sum k_j$. 
	As in Case 2, we now have the following commutative diagram \[\begin{tikzcd}
	\Z(\lambda) \arrow[hookrightarrow]{r} \arrow[twoheadrightarrow]{dr}{\varphi} & \E(\lambda) \arrow[twoheadrightarrow]{d}{\widetilde{\varphi}} \\
	& \A^{\sum k_j}
	\end{tikzcd}
	\] and our goal is to show that the fibres of $\varphi$ have constant dimension.
	For each $j$, we can write $$\I(f) (x)-c_j = (x-a^1_{j})^{e_j^{1}}\ldots (x-a^{k_j}_{j})^{e^{k_j}_{j}} h_j(x)$$ for some $c_j \in \mathbb{C}$ and some monic polynomial $h_j(x)$ of degree $n+1 -\sum\limits_{1\leq i\leq k_j}e^{i}_j$. 
	Therefore: \begin{align*}
	f(x) = (x-a^1_j)^{e^1_j-1} \ldots (x-a^{k_j}_j)^{e^{k_j}_j-1} \Big((x-a^1_j)\ldots(x-a^{k_j}_j)h'_j(x) +\\ e^1_j(\widehat{x-a^1_j})(x-a^2_j)\ldots(x-a^{k_j}_j)h_j(x) + \\ e^2_j(x-a^1_j) (\widehat{x-a^2_j})\ldots(x-a^{k_j}_j)h_j(x) + \\e^{k_j}_j(x-a^1_j)\ldots (x-a^{k_j-1}_j) (\widehat{x-a^{k_j}_j}) \Big)\\ \text{for each} \, \, 1\leq j\leq r
	\end{align*}	
	Comparing with original expression for $f(x)$ in \eqref{E3}, we see that for each $j$: \begin{align*}
	g(x) = (x-a^1_j)\ldots(x-a^{k_j}_j)h'_j(x) + e_1(\widehat{x-a^1_j})(x-a^2_j)\ldots(x-a^{k_j}_j)h_j(x) + \\ e^2_j(x-a^1_j) (\widehat{x-a^2_j})\ldots(x-a^{k_j}_j)h_j(x) + \\e_k(x-a^1_j)\ldots (x-a^{k_j-1}_j) (\widehat{x-a^{k_j}_j}) 
	\end{align*} 
	For any $\big((a^1_{1}, \ldots, a^{k_1}_1), \ldots, (a^1_j,\ldots, a^{k_j}_j), \ldots, (a^1_r, \ldots, a^{k_r}_r)\big) \in \A^{\sum k_j}$, much like the proof of Case 2, we have \begin{gather*}
	\varphi^{-1}\big((a^1_{1}, \ldots, a^{k_1}_1), \ldots, (a^1_j,\ldots, a^{k_j}_j), \ldots, (a^1_r, \ldots, a^{k_r}_r)\big) = \\ \bigcap\limits_{j}\Big\{\text{linear subspace of} \hspace{2mm}\widetilde{\varphi}^{-1}\big((a^1_{1}, \ldots, a^{k_1}_1), \ldots, (a^1_j,\ldots, a^{k_j}_j), \ldots, (a^1_r, \ldots, a^{k_r}_r)\big) \\ \text{spanned by the coefficients of } h_j\Big\}
	\end{gather*}
	Let \begin{gather*}
	V_j \big((a^1_{1}, \ldots, a^{k_1}_1), \ldots, (a^1_j,\ldots, a^{k_j}_j), \ldots, (a^1_r, \ldots, a^{k_r}_r)\big) := \\ \Big\{\text{linear subspace of} \hspace{2mm}\widetilde{\varphi}^{-1}\big((a^1_{1}, \ldots, a^{k_1}_1), \ldots, (a^1_j,\ldots, a^{k_j}_j), \ldots, (a^1_r, \ldots, a^{k_r}_r)\big) \\ \text{spanned by the coefficients of } h_j\Big\}
	\end{gather*} Then, noting that the degree of $h_j$ is $n+1 -\sum\limits_{1\leq i\leq k_j}e^{i}_j$, we have that the codimension of $V_j$ in $\widetilde{\varphi}^{-1}\big((a^1_{1}, \ldots, a^{k_1}_1), \ldots, (a^1_j,\ldots, a^{k_j}_j), \ldots, (a^1_r, \ldots, a^{k_r}_r)\big) $ is $\sum\limits_{1\leq i\leq k_j}e^{i}_j-1$. Therefore, \begin{equation}
	\begin{gathered}
	\text{codimension of }\Big( \bigcap\limits_{j} V_j\big((a^1_{1}, \ldots, a^{k_1}_1), \ldots, (a^1_j,\ldots, a^{k_j}_j), \ldots, (a^1_r, \ldots, a^{k_r}_r)\big) \Big) \\ \leq \sum\limits_{i,j} e^i_j -r
	\end{gathered} \label{ineq6}
	\end{equation}where equality holds if the intersection of these linear subspaces is (dimensionally) transverse.
	What is left to show is that the inequality in ~\eqref{ineq6} is actually an equality  over all points in $\A^{\sum k_j}$. 
	
	To this end, note that when all the ramification points come together, i.e. when $a_i =a_j$ for all $i,j$, we are reduced to Case 1. In that situation,~\eqref{ineq6} reduces to an equality $$\text{dim}\Big(\varphi^{-1} (\underbrace{a,\ldots \ldots \ldots,a}_\mathrm{\sum\limits_{i,j} e^i_j -r})\Big) = n+r-\sum\limits_{i,j} e^i_j .$$ Now, upper-semicontinuity of the dimension of the fibres (see e.g. \cite[Corollaire 13.1.5]{Grothendieck1966}) implies that for all $$\Big((a^1_{1}, \ldots, a^{k_1}_1), \ldots, (a^1_j,\ldots, a^{k_j}_j)\Big) \in \A^{\sum k_j}$$ one has   \begin{equation}
	\begin{gathered}
	\text{codimension of }\Big( \bigcap\limits_{j} V_j\big((a^1_{1}, \ldots, a^{k_1}_1), \ldots, (a^1_j,\ldots, a^{k_j}_j), \ldots, (a^1_r, \ldots, a^{k_r}_r)\big) \Big) \\ \geq \text{codimension of }\Big(\varphi^{-1} (\underbrace{a,\ldots \ldots \ldots,a}_\mathrm{\sum\limits_{i,j} e^i_j -r})\Big) \\= \sum\limits_{i,j} e^i_j -r
	\end{gathered}.\label{UpSemIneq} 
	\end{equation}
	
	Finally, note that ~\eqref{ineq6} and ~\eqref{UpSemIneq} now imply:
	\begin{gather*}
	dim\Bigg(\varphi^{-1}\big((a^1_{1}, \ldots, a^{k_1}_1), \ldots, (a^1_j,\ldots, a^{k_j}_j), \ldots, (a^1_r, \ldots, a^{k_r}_r)\big)\Bigg) = n+r-\sum\limits_{i,j} e^i_j \\=n+ \lvert F(\lambda)\rvert - \sum_{i}\lvert R(\lambda)_i\rvert 
	\end{gather*}
	which completes the proof. 
	
	\hfill $\square$
	
\end{proof}

\begin{remark}[\textbf{Irreducibility of $X(\lambda)$}]
	If $\lambda \in \P_n$ is such that $n +r -\sum e^i_j\geq 0$, then Proposition \ref{VBness} implies that if $S(\lambda)$ is non-empty then it admits a finite, unramified morphism to $S(\lambda)/\mathfrak{S}_{\lvert N(\lambda)\rvert}$, given by:  $$\varphi \Big\vert_{S(\lambda)}: S(\lambda) \to  S(\lambda)/\mathfrak{S}_{\lvert N(\lambda)\rvert}.$$ The deck group $\mathfrak{S}_{\lvert N(\lambda)\rvert}$  acts transitively on the fibres of $\varphi \Big\vert_{S(\lambda)}$. So $S(\lambda)$ is connected, and  its closure $X(\lambda)$ is irreducible.
\end{remark}

Now we address the first question raised in the beginning of \Cref{Section4}, namely, the question of non-emptyness of $S(\lambda)$, for  $\lambda \in \P_n$. 

\begin{claim}\label{claim4.3}
	Given $e_1,\ldots, e_r$ such that $e_j \geq 2$ and $\sum (e_j -1) = n$, there exists $\phi \in \M$ such that $B_b(\phi)$ is a singleton for all $b\in \Br(\phi)$.
\end{claim}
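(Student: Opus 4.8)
The plan is to construct $\phi$ explicitly. Given $e_1,\ldots,e_r$ with $e_j\geq 2$ and $\sum(e_j-1)=n$, I want a degree $n+1$ monic polynomial $\phi$ (vanishing at $0$) whose derivative is $\phi'(x)=(n+1)(x-a_1)^{e_1-1}\cdots(x-a_r)^{e_r-1}$ for suitable distinct $a_1,\ldots,a_r$, \emph{and} such that $\phi(a_1),\ldots,\phi(a_r)$ are pairwise distinct — the latter condition being exactly what forces each fibre $B_b(\phi)$ to be a singleton, since two ramification points $a_i,a_j$ lie over the same branch point precisely when $\phi(a_i)=\phi(a_j)$. Note the factorization of $\phi'$ is automatic once we pick the $a_j$: set $\phi(x):=(n+1)\int_0^x(t-a_1)^{e_1-1}\cdots(t-a_r)^{e_r-1}\,dt$, which is well-defined and has the required degree in characteristic $0$ or $>n+1$ (so no small-characteristic obstruction to integrating). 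So the only real content is choosing the $a_j$ to be distinct and to have distinct $\phi$-values.

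First I would treat this as a dimension/genericity argument: the map $(a_1,\ldots,a_r)\mapsto(a_1,\ldots,a_r,\phi_{\underline a})$ realizes $\E(\lambda)$ (in the notation of Case 1 of Proposition~\ref{VBness}, with $g\equiv 1$, or rather its specialization), and the locus where some $\phi(a_i)=\phi(a_j)$ is a proper closed subset of $\A^r$ — it is cut out by the vanishing of finitely many polynomials $\phi(a_i)-\phi(a_j)$ in the $a$'s, none of which is identically zero. To see that $\phi(a_i)-\phi(a_j)$ is not identically zero, it suffices to exhibit one choice of $(a_1,\ldots,a_r)$ with $\phi(a_i)\neq\phi(a_j)$; e.g. take $a_k = kc$ for a transcendental (or sufficiently generic) scalar $c$ and compute the leading behaviour of $\phi(a_i)-\phi(a_j)$ as a polynomial in $c$, which is nonzero because the integrand is a nonzero polynomial whose primitive separates generic points. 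Since $\K$ is algebraically closed (hence infinite), the complement of this proper closed subset together with the diagonals $\{a_i=a_j\}$ is nonempty, and any point in it gives the desired $\phi$.

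A cleaner alternative, which I would actually prefer to write up, is a direct inductive/explicit construction: take $a_j := j$ for $j=1,\ldots,r$ (distinct since $\mathrm{char}\,\K=0$ or $>n+1>r$), form $\phi$ by integration as above, and then argue that by replacing $\phi(x)$ with $\phi(\lambda x)/\lambda^{?}$-type rescalings — or more simply, by noting $\phi(a_i)-\phi(a_j) = \int_{a_j}^{a_i}\phi'$ and that on the real interval $[j,i]$ (after choosing the $a_k$ real and well-separated) the integrand is not identically zero on that interval, hence the integral can be made nonzero — one gets distinctness. If even this is delicate, one falls back on: the values $\phi(a_1),\ldots,\phi(a_r)$, viewed as functions of $(a_1,\ldots,a_r)\in\A^r$, cannot all coincide on a dense set, so generic $a$ works.

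The main obstacle I anticipate is purely bookkeeping: verifying that $\phi(a_i)-\phi(a_j)\not\equiv 0$ as a polynomial in $(a_1,\ldots,a_r)$, i.e. ruling out a miraculous identity. This is where I would spend the effort — probably by computing the lowest-degree term of $\phi(a_i)-\phi(a_j)$ under a one-parameter scaling $a_k\mapsto t\,a_k$, where $\phi(ta_i)-\phi(ta_j)$ becomes $t^{n+1}$ times a nonzero constant depending on the $e_j$ and the ratios $a_i/a_j$, so the difference vanishes identically only on a proper subvariety. Everything else — the integration step, the nonemptiness of a complement of a proper closed subset over an infinite field — is routine. (One should also double-check the characteristic hypothesis enters only to guarantee the antiderivative has degree exactly $n+1$ and that $1,\ldots,r$ are distinct, both of which are immediate from $\mathrm{char}\,\K=0$ or $>n+1$.)
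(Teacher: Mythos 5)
Your approach is genuinely different from the paper's. The paper proves this claim over $\mathbb{C}$ by constructing a degree-$(n+1)$ branched cover of $\A^1$ via the Riemann existence theorem: choose cycles $\tau_1,\ldots,\tau_r\in\mathfrak{S}_{n+1}$ of lengths $e_1,\ldots,e_r$ whose product is an $(n+1)$-cycle, send the loop around $b_j$ to $\tau_j$, and realize the resulting $\pi_1$-homomorphism as an algebraic cover; the transfer to general algebraically closed $\K$ of characteristic $0$ (respectively $p>n+1$) is then done via Grothendieck's comparison of $\pi_1^{\et}$ (respectively of the prime-to-$p$ quotient $\pi_1^{p'}$) with $\pi_1^{top}$. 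By contrast, you never touch covers or fundamental groups: you integrate $(n+1)\prod_k(x-a_k)^{e_k-1}$ and argue that a generic choice of pairwise distinct $a_1,\ldots,a_r$ yields pairwise distinct critical values. This is a more elementary route, it stays entirely inside the polynomial algebra, and it sidesteps all appeals to Riemann existence and specialization/lifting of covers; the paper's monodromy construction, on the other hand, has the advantage of never having to verify a polynomial non-identity.

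That non-identity is precisely where your writeup is thin, and where you correctly flag the remaining work — but the specific argument you lead with does not close the gap. Scaling $a_k\mapsto t a_k$ only exhibits the homogeneity $\phi_{ta}(ta_i)-\phi_{ta}(ta_j)=t^{n+1}\bigl(\phi_a(a_i)-\phi_a(a_j)\bigr)$; calling the resulting coefficient ``a nonzero constant depending on the ratios'' is exactly the claim at issue, so as stated this is circular. A clean way to finish in your framework: use the substitution $s=a_j+t(a_i-a_j)$ to write
\[
\phi_a(a_i)-\phi_a(a_j)\;=\;(n+1)(a_i-a_j)^{e_i+e_j-1}\int_0^1 (t-1)^{e_i-1}t^{e_j-1}\prod_{k\neq i,j}\bigl(a_j+t(a_i-a_j)-a_k\bigr)^{e_k-1}\,dt,
\]
and then evaluate the right-hand side (a polynomial in the $a$'s) at the point where $a_k=a_j$ for all $k\neq i,j$. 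The integral collapses to a Beta value
$\int_0^1 (t-1)^{e_i-1}t^{\,e_j-1+\sum_{k\neq i,j}(e_k-1)}\,dt=\pm\dfrac{(e_i-1)!\,\bigl(e_j-1+\sum_{k\neq i,j}(e_k-1)\bigr)!}{\bigl(\sum_k(e_k-1)\bigr)!}$,
which is nonzero since every factorial involved is at most $n!$ and $\mathrm{char}\,\K=0$ or $>n+1$. Hence $\phi_a(a_i)-\phi_a(a_j)$ is a nonzero polynomial, the union over $(i,j)$ of its vanishing loci and of the diagonals $\{a_i=a_j\}$ is a proper closed subset of $\A^r$, and any point of the (nonempty, since $\K$ is infinite) complement yields the required $\phi$. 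With that filled in, your argument is complete and gives slightly more than the paper's: not merely existence, but that the generic tuple $(a_1,\ldots,a_r)$ works, which is consonant with the stratification picture of \Cref{Section4}.
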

 In other words, we are proving the non-emptyness of the strata corresponding to those elements of $\P_n$ that are handled in Case 1 of Proposition \ref{VBness} (see Figure \ref{Fig4}.). For the definitions of $B_b(\phi)$ and $\Br(\phi)$ see definition \ref{ramdata} and \eqref{FR}.

\vspace{3mm}

\begin{proof}{(of Claim \ref{claim4.3})} We divide the proof into two cases: when $char \K =0$ and and when $char\K>0$.
	
	First, we consider the case when $char \K=0$. Our strategy is to prove the statement for when $K=\mathbb{C}$ and then invoke a theorem by Grothendieck to prove the statement for a general field of characteristic $0$.  So now, assume $\K =\mathbb{C}$. Suppose we are given $B: =\{b_1,\ldots, b_r\}\subset \A^1$, and $r$ simple disjoint oriented paths $\gamma_1, \ldots, \gamma_r$ starting at a given base point, say $b_0$, and encircling $b_1, \ldots, b_r$ respectively. In particular, $\gamma_1, \ldots, \gamma_r$ freely generates $\pi_1^{top}(\A^1-B, b_0)$, the topological fundamental group of $\A^1-B.$
	On the other hand, let $\tau_1,\ldots, \tau_r$ denote cycles in $\mathfrak{S}_{n+1}$ of length $e_1, \ldots, e_r$ respectively, such that their product is an $n+1$-cycle. For example, one can choose \begin{gather*}
	\tau_1 = (1 \ldots e_1),\\ \tau_2 = (e_1 \ldots (e_1+e_2-1)),\\ \tau_3 = ((e_1+e_2-1) \ldots (e_1+e_2+e_3-2)),\\ \vdots\\ \tau_k =\Big(\big(\sum_{i=1}^{k-1}e_i-(k-2)\big) \ldots \big(\sum_{i=1}^{k} e_i-(k-1)\big)\Big), \\ \vdots
	\end{gather*}Clearly $\prod \tau_i = (1\ldots n+1)$.
	Now consider the homomorphism \begin{gather}
	\pi_1^{top}(\A^1-B, b_0) \to \mathfrak{S}_{n+1} \nonumber\\
	\gamma_i \mapsto \tau_i \label{fundemantalHom}
	\end{gather} This induces an algebraic cover $ \phi: \A^1 \to \mathbb{A}^1$ by Riemann's existence theorem, and hence is given by a polynomial of degree $n+1$. More explicitly, we can take the disjoint union of $n+1$ copies of $\A^1 -\cup_j \{\text{interiors of } \gamma_j\}$ and 'join them locally' over the disks bounded by $\gamma_j$ by $z \mapsto z^{e_j}$ for $1\leq j\leq r$. A beautiful explanation along these lines can be found in  ~\cite{Eisenbud1991}.
	Forgetting the choice of a base point entails defining the epimorphisms up to conjugacy, and in turn we have  the following bijective correspondence: \begin{gather*}
	\text{Hom}\big(\pi_1(\A^1-B, b_0), \mathfrak{S}_{n+1}\big)/\text{Inn}(\mathfrak{S}_{n+1}) \\\longupdownarrow \\ {\Big\{ \text{degree n+1 monic polynomials branched at }b_1,\ldots, b_r \text{with ramification indices }e_1,\ldots, e_r \text{ respectively}\Big\}}.
	\end{gather*}Over a general algebraically closed field of characteristic $0$, the \etale fundamental group of the "$\A^1-B$" is isomorphic to that over $\mathbb{C}$, as proved by Grothendieck in \cite[Proposition 4.6 (Formule de Künneth)]{Grothendieck1971}. So the above argument carries over verbatim with $\pi_1^{top}$ replaced by $\pi_1^{\et}$. 
	
	Next, consider the case when $char \K =p>0$. Recall that at the beginning of this paper, we fixed once and for all, that whenever $char \K>0$, we assume $char \K>n+1$. As a result, $p \not\big\vert (n+1)!$ and branched $\mathfrak{S}_{n+1}$ covers of $\mathbb{P}^1_{\K}$ are in bijective correspondence with branched covers of $\mathbb{P}^1_{\mathbb{C}}$ (see, e.g. \cite{Stevenson2017}). 
	More precisely, as  explained in \textit{loc. cit.}, one considers \etale covers of degree $n+1$ over $\A^1-\{r \text{ points}\}$ as finite quotients of the \textit{prime-to-}$p$ fundamental group $\pi_1^{p'}$. It is defined by taking the inverse system of \etale covers, the order of whose Galois group is coprime to $p$. Note that $\pi_1^{p'}$ is the maximal prime-to-$p$ quotient of $\pi_1^{\et}$, which itself is defined by considering the inverse system of all finite \etale covers of $\A^1-\{r \text{ points}\}$. By \cite{Grothendieck1971}, Corollary 2.12, $$\pi_1^{p'}(\A^1_{\K} -\{r \enspace \K\text{-points}\}) \cong \pi_1^{p'} (\A^1_{\mathbb{C}} -\{r \enspace \mathbb{C}\text{-points}\}),$$ and the latter  is the maximal prime-to-$p$ quotient of the profinite completion of $\pi_1^{top}(\A^1-B, b_0)$. 
	Finally, the upshot is that since $p\not\big\vert \#G$ for all subgroups $G \subset \mathfrak{S}_{n+1}$, the mod-$p$ reduction of the topological finite covers constructed in \eqref{fundemantalHom}, gives us degree $n+1$ self-maps of $\A^1_{\K}$ with ramification indices specified in the statement of the claim.
	
	\hfill $\square$
\end{proof}
\noindent The proof of the next lemma is similar to that of Case 3 in Proposition \ref{VBness}. For notation and definitions, recall \eqref{CommPoset} and \eqref{iota_n} from Section \ref{Section2.2}.
\begin{lemma}\label{nonempty}
	Let $n$ be a  positive integer and let $\lambda\in \P_n$. Let $\sigma_n(\lambda)\in \mathfrak{P}$ be given by the following data:\begin{align*}
	& \text{positive integers } k_1,\ldots, k_r, \\ &\text{integers } e^i_j \geq 2, \text{ for each } 1\leq i\leq k_j, \text{ and } 1\leq j \leq r, 
	\end{align*} Then $S(\lambda)$ is non-empty for all $\lambda \in \P_n$ that satisfy the condition $n - \sum (e_i-1) \geq \sum (k_j - 1) $.
\end{lemma}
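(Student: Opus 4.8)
It is enough to produce one polynomial $\Phi\in\M$ of degree $n+1$ with ramification type $\sigma_n(\lambda)$ — that is, with $r$ branch points over which the ramification indices are $\{e^1_j,\dots,e^{k_j}_j\}$, together with $d:=n-\sum_{i,j}(e^i_j-1)$ simple branch points — since then the corresponding $\mathfrak{S}_n$-orbit in $\X_n$ lies in $S(\lambda)$. Mirroring Case 3 of Proposition \ref{VBness}, I would form the incidence variety $\Z(\lambda)$ of \eqref{Z3}, the ambient affine bundle $\E(\lambda)$ of \eqref{E3}, and the projections $\varphi\colon\Z(\lambda)\to\A^{\sum_j k_j}$ and $\widetilde\varphi\colon\E(\lambda)\to\A^{\sum_j k_j}$. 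By the analogue of \eqref{isom9} (with $k$ replaced by $\sum_j k_j$), the fibre $\widetilde\varphi^{-1}(a)$ is the affine space $\A^{d}$ with coordinates the lower coefficients $s_1,\dots,s_d$ of $g$, and inside it $\varphi^{-1}(a)$ is cut out by the $\sum_j(k_j-1)$ equations $\I(f)(a^i_j)=\I(f)(a^1_j)$, $2\le i\le k_j$. Because $f$ is linear in $(s_1,\dots,s_d)$ and $\I$ is the ($0$-vanishing) antiderivative operator, each of these equations is affine-linear in $(s_1,\dots,s_d)$, with linear part $s\mapsto\sum_{\ell}s_\ell\int_{a^1_j}^{a^i_j}x^{d-\ell}\prod_{i',j'}(x-a^{i'}_{j'})^{e^{i'}_{j'}-1}\,dx$.

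The hypothesis $n-\sum_{i,j}(e^i_j-1)\ge\sum_j(k_j-1)$ is precisely $d\ge\sum_j(k_j-1)$: there are at least as many unknowns $s_\ell$ as equations. I would then show that for $a$ in a dense open $U\subseteq\A^{\sum_j k_j}$ the linear parts of these $\sum_j(k_j-1)$ equations are linearly independent — for this it suffices to exhibit one configuration $a$ at which the corresponding matrix $\big[\int_{a^1_j}^{a^i_j}x^{d-\ell}\prod(x-a^{i'}_{j'})^{e^{i'}_{j'}-1}dx\big]$ has full row rank, since full rank is an open condition. For $a\in U$ the fibre $\varphi^{-1}(a)$ is then a nonempty affine space of dimension $d-\sum_j(k_j-1)\ge 0$, so $\varphi^{-1}(U)$ is a nonempty irreducible subset of $\Z(\lambda)$, an affine bundle over $U$; in particular $\Z(\lambda)\neq\emptyset$.

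It remains to pass from $\Z(\lambda)\neq\emptyset$ to $S(\lambda)\neq\emptyset$. A point of $\varphi^{-1}(U)$ yields a polynomial $\Phi=\I(f)$, and one must check that for a \emph{general} such point $\Phi$ has ramification type exactly $\sigma_n(\lambda)$: the $a^i_j$ are pairwise distinct (automatic on $U$), $g$ is squarefree and coprime to $\prod_{i,j}(x-a^i_j)$, and the only coincidences among the critical values $\{\Phi(a^i_j)\}_{i,j}\cup\{\Phi(\zeta):g(\zeta)=0\}$ are the imposed ones. Each of these is an open condition, so — $\varphi^{-1}(U)$ being irreducible — it is enough to produce a single point of $\varphi^{-1}(U)$ at which they all hold. \textbf{The main obstacle} is exactly these two certifications (the full-rank claim and the absence of accidental degenerations): they, rather than a bare count of parameters against equations, are where the inequality $d\ge\sum_j(k_j-1)$ is genuinely used, since an affine-linear system with fewer equations than unknowns can still be inconsistent. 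A clean alternative that sidesteps the genericity bookkeeping is to build a $\mathbb{C}$-point of $S(\lambda)$ via Riemann's existence theorem as in Claim \ref{claim4.3}: exhibit permutations $\sigma_1,\dots,\sigma_r$ of cycle types $(e^1_j,\dots,e^{k_j}_j,1,\dots)$ and $d$ transpositions in $\mathfrak{S}_{n+1}$ generating a transitive subgroup with product an $(n+1)$-cycle — constructed by induction on $r$, merging $\sum_i(e^i_1-1)$ of the transpositions (laid out as $k_1$ disjoint consecutive "runs" of lengths $e^i_1-1$) into $\sigma_1$ — and then transfer to arbitrary $\K$ exactly as in Claim \ref{claim4.3}; here the hypothesis guarantees both that the reduced data at each stage still satisfies its own inequality and that $\sum_i e^i_j\le n+1$, so that every $\sigma_j$ fits inside $\mathfrak{S}_{n+1}$.
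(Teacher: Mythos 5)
Your first route is in the same linear-algebra spirit as the paper's argument, but structured globally rather than inductively, and you are candid that the crucial genericity/full-rank certification is not carried out. The paper instead bootstraps from Claim~\ref{claim4.3} (which produces, via Riemann's existence theorem for $\mathbb{C}$ and then Grothendieck's comparison of prime-to-$p$ fundamental groups, a polynomial with $B_b(\phi)$ a singleton for every branch point) and then \emph{inducts on the number of non-simple branch points}, splitting the preimage of one branch point at a time. At each inductive step one is looking at a square system: the $l-1$ constraints $\I(f)(\alpha_i)=\I(f)(0)$ ($2\le i\le l$) cut hyperplanes in $\spec\K[u_1,\dots,u_{l-1}]$, the space of the $l-1$ free coefficients of the cofactor $h(x)$, and the paper argues that for generic $\alpha_2,\dots,\alpha_l$ these hyperplanes are in general position. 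Reducing to one-at-a-time splittings is precisely what keeps the paper's linear system small and tractable, whereas your route~1 confronts all $\sum_j(k_j-1)$ conditions simultaneously in $\A^{d}$ and must therefore exhibit a single configuration where a much larger integral matrix has full row rank --- a harder claim, and the one you rightly flag as \textbf{the main obstacle}. So route~1 is not wrong, but it has not discharged the same burden the paper discharges, and it bears more of it than the paper's inductive scheme.

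Your route~2 (Riemann existence applied directly to the full ramification type $\sigma_n(\lambda)$, followed by the $\pi_1^{p'}$ lift to positive characteristic) is a genuinely different argument and, if completed, is in fact \emph{cleaner} than the paper's: it avoids the hyperplane-consistency bookkeeping entirely, replacing it with a purely combinatorial construction of a Hurwitz tuple $(\sigma_1,\dots,\sigma_r,\tau_1,\dots,\tau_d)$ in $\mathfrak{S}_{n+1}$ with prescribed cycle types, transitive image, and product an $(n+1)$-cycle. The inductive scheme you sketch --- starting from $n$ simple transpositions realizing an $(n+1)$-cycle and merging consecutive runs into longer cycles --- is the standard Hurwitz degeneration trick and works; the role of the hypothesis $n-\sum(e^i_j-1)\ge\sum(k_j-1)$ is indeed to guarantee enough transpositions to merge and enough symbols so that each $\sigma_j$ (which moves $\sum_i e^i_j$ letters) sits inside $\mathfrak{S}_{n+1}$. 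This is the same strategy the paper uses for Claim~\ref{claim4.3}, but pushed all the way to the general ramification type rather than just the singleton-fibre case; it would make Lemma~\ref{nonempty} a direct corollary of a slightly stronger form of Claim~\ref{claim4.3} and remove the perturbation step. In short: route~2 is correct in outline, avoids the paper's linear-algebra induction, and is arguably the more robust argument, at the cost of needing the Hurwitz-tuple construction to be written out explicitly.
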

\begin{proof}
	Here, we continue using notation from \eqref{FR}. Our goal is to  show that $\pi(S(\lambda))$ is non-empty, i.e. there exists $\phi\in\M$ such that \begin{enumerate}[label=(\roman*)]
		\item  $\phi$ has at least $r$ branch points, say $\{b_1, \ldots, b_r \ldots\} $,
		\item $\phi^{-1} (b_j) = \{a^1_j, \ldots, a^{k_j}_j \}$, and
		\item $v_{\phi} (a^i_j) = e^i_j$ for all $1\leq i \leq k_j$ and $1\leq j\leq r$ such that $n - \sum (e^i_j-1) \geq \sum (k_j - 1)$.
	\end{enumerate}See Figure \ref{Fig3} for a schematic of the morphism $\phi$. Our proof hinges on induction on the set of branch points. The "base case" is the following: we prove the statement when $\I(\pi(S(\lambda)))$ contains polynomials such that all but one branch point have exactly one ramification point in its preimage. To this end, we show that if $\phi \in \M$ is such that \begin{gather*}
	\phi'= (x-a_1)^{e_1-1}\ldots(x-a_r)^{e_r-1}, \text{ and } \phi(a_i) \neq \phi(a_j) \text{ for } i<j,
	\end{gather*} then there exists a polynomial $\widehat{\phi}$ such that $v_{\widehat{\phi}}(a_i) = e_i$ for all $2\leq i\leq r$ and $\widehat{\phi}^{-1}(\phi(a_1)) \supset \{a^1_1,\ldots, a^l_1\}$ such that $v_{\widehat{\phi}}(a^l_1) = d_l$ and $\sum_{i=1}^{l} d_l = e_1$. In other words, to prove that the locally closed subset  $\I(\pi(S(\lambda)))$ of $\M$ is non-empty, we get hold of a generic point $\widehat{\phi} \in \I(\pi(S(\lambda)))$ by proving the existence of $\phi$ in the Zariski closure of $\I(\pi(S(\lambda)))$.  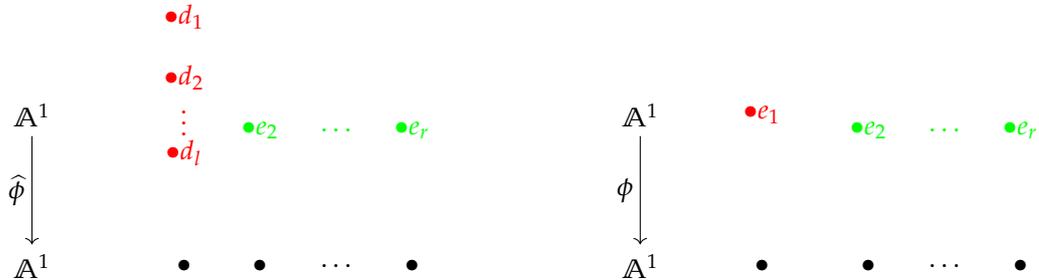
\begin{figure}[H]	\begin{tikzpicture}
	\node at (4,0.8) {\color{red}$\bullet$$d_1$}; 
	\node at (4,0) {\color{red}$\bullet$$d_2$}; 
	\node at (4,-0.5) {\color{red}$\vdots$};    \node at (11.6,-0.5) {\color{red}$\bullet e_1$};               
	\node (A1) at (2,-0.5) {$\A^1$};            	\node (A3) at (10,-0.5) {$\A^1$};           
	\node (A2) at (2,-2.5) {$\A^1$};    	\node (A4) at (10,-2.5) {$\A^1$};            
	\draw [->] (A1)-- (A2);                         	\draw [->] (A3)-- (A4);  \node at (9.8,-1.5) {$\phi$};    
	\node at (4,-1) {\color{red}$\bullet$$d_{l}$};                    
	\node at (1.8,-1.5) {$\widehat{\phi}$};                          \node at (5,-2.5) {$\bullet$}; \node at (6,-2.5) {$\ldots$};    \node at (7,-2.5) {$\bullet$};
	\node at (5,-0.7) {\color{green}$\bullet e_2$};  \node at (6,-0.7) {\color{green}$\ldots$}; 	\node at (13,-0.7) {\color{green}$\bullet e_2$};  \node at (14,-0.7) {\color{green}$\ldots$};  \node at (14,-2.5) {$\ldots$}; 
	\node at (7,-0.7) {\color{green}$\bullet e_r$}; 	\node at (15,-0.7) {\color{green}$\bullet e_r$};	\node at (15,-2.5) {$\bullet$};
	\node at (4,-2.5) {$\bullet$};   	\node at (13,-2.5) {$\bullet$};         \node at (11.6,-2.5) {$\bullet$};          
	\end{tikzpicture}\caption{The two schematics above represent two ramification types (for the definition, see \eqref{Mu-data}). On the left is $\widehat{\phi}$, a generic point in $\I(\pi(S(\lambda)))$, and on the right is $\phi$, a point in the closure of $\I(\pi(S(\lambda)))$.}
\end{figure} \noindent Since morphisms of $\M$ are considered up to translation, we can, without loss of generality, prove this statement on the assumption that $a_1=0$.
	
	Now, fix $r-1$ points $a_2,\ldots, a_r\in \A^1$ such that no two are equal and $a_i \neq 0$ for all $2\leq i\leq r$. 	Consider the variety  \begin{gather*}X_{e} := \big\{g\in \M: v_g(a_i) =e_i \text{ for some } a_i\in \mathbb{A}^1,  2\leq i\leq r, \text{ such that } g(a_i) \neq g(a_j) \text{ for } i<j\big\}
	\end{gather*} where $e$ is given by $e-1 := n - \sum(e_j-1)$. Then $X_{e} \cong \A^{e-1}$ by Proposition \ref{VBness}. 
	
	\noindent Let  $X'_e: = \mathcal{D}(X_e) \subset M'_n$, where $M'_n$ is as defined in \eqref{Disom}. Let  $M'_n = \spec \K[s_1, \ldots, s_n]$, where $s_1\ldots, s_n$ denote the coefficients of monic degree $n$ polynomials Then $X'_e$ is cut-out by hyperplanes given by equations $D^if(a_j) = 0$ where $1\leq j\leq  r$ and $1\leq i \leq e_j-1$. Note that $X'_e$ can be described by parametric equations in variables $t_1,\ldots, t_{e-1}$, determined by  the relation \begin{gather}\label{eq5.11}
	x^n+s_1x^{n-1}+\ldots s_{n-1}x+s_n = (x^{e-1}+t_1x^{e-1}+ \ldots+ t_{e-1})(x-a_2)^{e_2-1}\ldots (x-a_r)^{e_r-1}
	\end{gather}
	In other words, we have a linear embedding of affine spaces $$ \spec \K[t_1,\ldots, t_{e-1}]\cong \A^{e-1} \xrightarrow{T} \A^n$$ induced by equation \eqref{eq5.11} (by comparing the powers of $x$ on both sides).
	Now, we show that given $d_1,\ldots, d_l$ one can find $\alpha_2, \ldots, \alpha_l$ such that there exists $$f(x) = x^{d_1-1}(x-\alpha_2)^{d_2-1}\ldots(x-\alpha_l)^{d_l-1}(x-a_2)^{e_2-1}\ldots (x-a_r)^{e_r-1} h(x),$$ $f\in M'_n$ (in fact, $f\in X'_e$), satisfying $\I (f) (\alpha_i) = \I(f)(0)$ for all $2\leq i\leq l$, for some degree $l-1$ polynomial $h(x)$.  If we consider all possible  monic degree $l-1$ polynomials in place of $h(x)$, then $f(x)$ is still in $X_e$, except that the condition $\I (f) (\alpha_i) = \I(f)(0)$  may not be satisfied. The space of monic degree $l-1$  polynomials is $\spec \K[u_1,\ldots, u_{l-1}]$, where the coordinates are given by the coefficients.  The relations $\I (f) (\alpha_i) = \I(f)(0)$ gives $l-1$ linear conditions on $\spec \K[u_1,\ldots, u_{l-1}]$. It suffices to check that there exists $\alpha_2,\ldots, \alpha_l$ such that intersection of the $l-1$ hyperplanes whose equations are given by the linear conditions $\I (f) (\alpha_i) = \I(f)(0)$, is non-empty. Indeed, it is easy to see that for a generic choice of $\alpha_2,\ldots, \alpha_l$, the none of the equations of the hyperplanes is a scalar multiple of the other, so their intersection is forced to be non-empty, and this completes the proof of the base case. The inductive step now involves splitting the ramification point with index $e_j$, where $j\geq 2$ the same as above, and this completes the proof of the lemma.
	
	\hfill$\square$\end{proof}

\noindent 
Recall the definitions of $\sigma_n$ and $\mathfrak{P}$ from \eqref{CommPoset} and Definition \ref{Mu-data}. Then, Proposition \ref{VBness} and Lemma \ref{nonempty} imply the following.
\begin{cor}
	Let $n$ be a  positive integer and let $\lambda\in \P_n$. Let $\sigma_n(\lambda)\in \mathfrak{P}$ be given by the following data:\begin{align*}
	& \text{positive integers } k_1,\ldots, k_r, \\ &\text{integers } e^i_j \geq 2, \text{ for each } 1\leq i\leq k_j, \text{ and } 1\leq j \leq r, 
	\end{align*} Then $S(\lambda)$ is non-empty for all $\lambda \in \P_n$ that satisfy the condition $n - \sum (e_i-1) \geq \sum (k_j - 1) $.
\end{cor}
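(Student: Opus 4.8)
The plan is to obtain the corollary by packaging the two results just established, since its non-emptiness assertion is literally the conclusion of Lemma~\ref{nonempty}, and the phrase preceding the statement signals that one should read it together with the affine-space description of $X(\lambda)=\overline{S(\lambda)}$ furnished by Proposition~\ref{VBness}. The only genuine task is therefore to verify that the single inequality $n-\sum(e_i-1)\geq\sum(k_j-1)$ is simultaneously the hypothesis of both inputs.

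First I would translate the hypothesis into the notation of Section~\ref{Section4}. By the Riemann--Hurwitz count recorded after Figure~\ref{Fig3} one has $\lvert N(\lambda)\rvert=n-\sum_{i,j}(e^i_j-1)$, so the inequality is exactly $\lvert N(\lambda)\rvert\geq\sum_j k_j-r$. I would then note the identity
\[
N_0\;=\;n-\sum_i\lvert R(\lambda)_i\rvert+\lvert F(\lambda)\rvert\;=\;\lvert N(\lambda)\rvert+r,
\]
so that the hypothesis gives $N_0\geq\sum_j k_j\geq 0$, which is precisely the condition $N_0\geq 0$ required by Proposition~\ref{VBness}. Hence, once $S(\lambda)$ is known to be non-empty, Proposition~\ref{VBness} applies and yields $X(\lambda)/\mathfrak{S}_{\lvert N(\lambda)\rvert}\xrightarrow{\ \cong\ }\A^{\lvert R(\lambda)\rvert+N_0}$; in particular, by the remark following that proposition, $X(\lambda)$ is irreducible of the expected dimension.

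It then remains only to invoke Lemma~\ref{nonempty} for the non-emptiness of $S(\lambda)$ itself: its inductive argument, splitting off one branch point at a time exactly as in Case~3 of Proposition~\ref{VBness}, constructs under the very same numerical condition a morphism $\phi\in\M$ whose ramification type is $\sigma_n(\lambda)$, i.e.\ a point of $S(\lambda)$. Assembling the two inputs gives the corollary. I do not expect any real obstacle here: the substantive work is already contained in Lemma~\ref{nonempty} and Proposition~\ref{VBness}, and the only point deserving care is the bookkeeping identity $N_0=\lvert N(\lambda)\rvert+r$ and the resulting equivalence of the two hypotheses, which I have sketched above; note also that the condition $n-\sum(e_i-1)\geq\sum(k_j-1)$ is strictly weaker than $n\geq 3m$ for a length-$m$ ramification, so it indeed covers the range needed in the proof of Theorem~\ref{thmB}.
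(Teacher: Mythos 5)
Your reading is correct and matches the paper's (rather terse) treatment: the paper offers no proof body, only the sentence that the corollary ``follows from Proposition~\ref{VBness} and Lemma~\ref{nonempty},'' and your bookkeeping supplies exactly the missing verification. In particular the identity $N_0=n-\sum_i\lvert R(\lambda)_i\rvert+\lvert F(\lambda)\rvert=\lvert N(\lambda)\rvert+r$, combined with $\lvert N(\lambda)\rvert=n-\sum_{i,j}(e^i_j-1)$, shows that the stated inequality is equivalent to $N_0\geq\sum_j k_j\geq 0$, so it is simultaneously the hypothesis of Proposition~\ref{VBness} and of Lemma~\ref{nonempty}. You are also right to notice (and it is worth saying plainly) that as printed the corollary is a verbatim duplicate of Lemma~\ref{nonempty}; given the surrounding text, the intended content is evidently the stronger conclusion that under this condition $X(\lambda)/\mathfrak{S}_{\lvert N(\lambda)\rvert}$ is a nonempty affine space, and your write-up correctly supplies that conclusion by combining the two inputs. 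The only cosmetic nit is that the paper writes $e_i$ where $e^i_j$ is meant; you silently corrected this, which is the right reading.
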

As we have learnt in this section, the ramification data associated to $\lambda \in \P^n$ solely determine whether $X(\lambda)$, modulo a subgroup of $\mathfrak{S}_n$ under its natural action, is isomorphic to an affine space. 
\begin{defn}
An elements	$\mu\in \mathfrak{P}$ is said to be \textit{\textbf{affine $n$-admissible}} if for all $\lambda \in \sigma_n^{-1}(\mu)$, one has $X(\lambda)/\mathfrak{S}_{\lvert N(\lambda) \rvert}\xrightarrow{\cong} \mathbb{A}^{d}$ for some $d>0$. 
	
	Equivalently, following \eqref{Mu-data}, $\mu\in \mathfrak{P}$ is said to be \textit{\textbf{affine $n$-admissible}} if $$n - \sum (e^i_j-1) \geq \sum (k_j - 1).$$
	
	 \hfill $\square$
\end{defn}
\begin{lemma}{\label{A-admissibility}}
	Let $m$ be a positive integer. All length $m$ ramification $\mu\in \mathfrak{P}$ are affine $n$-admissible whenever $n\geq 3m$. 
\end{lemma}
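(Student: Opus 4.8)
The plan is to unwind affine $n$-admissibility into a single numerical inequality and then bound its two summands by $m$. By the (equivalent) characterisation in the definition of affine $n$-admissibility, a length $m$ ramification $\mu\in\mathfrak{P}$, carrying the data $k_1,\dots,k_r$ and $e^i_j\geq 2$, is affine $n$-admissible exactly when
\[
n \;\geq\; \sum_{i,j}(e^i_j-1)\;+\;\sum_{j=1}^{r}(k_j-1),
\]
so it is enough to show the right-hand side is at most $3m$. Put $L_j:=\sum_{i=1}^{k_j}(e^i_j-1)$, the differential-length contribution of the $j$-th branch point. The length formula \eqref{lengthp} says $m=\sum_{j=1}^{r}(L_j-1)$, equivalently $\sum_{i,j}(e^i_j-1)=\sum_{j}L_j=m+r$.

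Next I would bound $r$ and $\sum_j(k_j-1)$ separately. Since every branch point recorded by an element of $\mathfrak{P}$ is non-simple, the remark following Definition \ref{simplybranchedRam} gives $L_j=l(B_{b_j})\geq 2$, hence $L_j-1\geq 1$ and so $m=\sum_j(L_j-1)\geq r$. On the other hand $e^i_j\geq 2$ forces $e^i_j-1\geq 1$, so $k_j\leq L_j$ and therefore $\sum_j(k_j-1)\leq\sum_j(L_j-1)=m$. Combining,
\[
\sum_{i,j}(e^i_j-1)+\sum_{j}(k_j-1)\;\leq\;(m+r)+m\;=\;2m+r\;\leq\;3m\;\leq\;n,
\]
which is affine $n$-admissibility. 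By Proposition \ref{VBness} this yields $X(\lambda)/\mathfrak{S}_{\lvert N(\lambda)\rvert}\cong\mathbb{A}^{d}$ for every $\lambda\in\sigma_n^{-1}(\mu)$, completing the proof.

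There is essentially no obstacle here; the whole argument is the identity $\sum_{i,j}(e^i_j-1)=m+r$ coming from the definition of $\length$, plus the two elementary estimates $r\leq m$ and $\sum_j(k_j-1)\leq m$. The one step that genuinely uses structure — and that one should state carefully — is $L_j\geq 2$: this holds precisely because $\mathfrak{P}$ records only the non-simple part of the ramification (the simply-branched points being absorbed into $N(\lambda)$), and it is what makes $r\leq m$ work. Finally, the bound $n\geq 3m$ is sharp: for the $\mu$ with $r=m$ branch points each of type $\{2,2\}$ one has $\sum_{i,j}(e^i_j-1)=2m$ and $\sum_j(k_j-1)=m$, so affine $n$-admissibility becomes exactly $n\geq 3m$.
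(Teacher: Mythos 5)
Your proof is correct and follows essentially the same approach as the paper: both unwind affine $n$-admissibility into a numerical inequality, use the length formula $\sum_{i,j}(e^i_j-1)=m+r$, and bound $r\leq m$ (via $L_j\geq 2$ for non-simple branch points) and $\sum_j(k_j-1)\leq m$, arriving at $n\geq 3m$. The only cosmetic difference is that the paper first rewrites the inequality as $n-m\geq\sum k_j$ and then maximizes $\sum k_j$, whereas you bound the two summands directly; your added sharpness remark is also consistent with the extremal configuration the paper identifies.
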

\begin{proof}
	We continue with the notation and definitions set in definition \ref{Mu-data}, \eqref{FR}. As in definition \ref{Mu-data}, \begin{gather}
	m=\length(\mu) =  \sum\limits_{1\leq j \leq r} \Big(\sum\limits_{1\leq i\leq k_j}(e^i_j-1) -1\Big). \label{eq20}
	\end{gather} Our goal is to keep $m$ fixed and find the minimum $n$ such that for all values of $r$, and $k_1,\ldots, k_r$, and $e^i_j$ satisfying \eqref{eq20}, \begin{gather}
	n - \sum (e^i_j-1) \geq \sum (k_j - 1). \label{eq21}
	\end{gather} Using \eqref{eq20} one can simplify \eqref{eq21} to $n-m\geq \sum k_j$. So, we first maximize $\sum k_j$. Maximizing the number of ramification points while keeping the length $m$ fixed, entails minimizing the ramification indices. So, $e^i_j=2$ for all $1\leq i\leq k_j$ and $1\leq j\leq r$. Therefore, \eqref{eq20} reduces to $m=\sum k_j -r$. It is easy to see that for a polynomial $\phi\in \M$ with ramification length $m$ (see \eqref{eq1.1} and Definition \ref{ramdata}), the maximum number of index $2$ ramification points a $\phi$ can have is $2m$. So, \eqref{eq21} implies $n\geq 3m$. 
	
	\hfill$\square$
\end{proof}
\begin{remark}
	Note that affine $n$-admissibility implies combinatorial $n$-admissibility. The converse is not, however, true. One can extend the proof of Lemma \ref{nonempty} to prove that elements of $\mathfrak{P}$ which are combinatorially $n$-admissible correspond to non-empty strata, and which would then lift the restriction posed by the inequality $n - \sum (e^i_j-1) \geq \sum (k_j - 1)$ in Lemma \ref{nonempty}. However, that won't be fruitful for our purpose since. In other words, if $\lambda \in \P_n$ does not satisfy \eqref{eq21}, then, even if $X(\lambda)$ is non-empty, its geometry remains unknown.
\end{remark}	\section{Spectral sequences and computation of $H^i(U)$}\label{Section5}
To work over algebraically closed fields of all characteristics at the same time, we first set up some notation and some conventions that we will use for the rest of the paper.
\begin{notation}
Let $\mathbf{Q}$ denote  $\mathbb{Q}$, the field of rational numbers, or $\mathbb{Q}_{\ell}$, the field of $\ell$-adic numbers. Throughout this section, for any $\mathbb{Z}$-scheme $V$, we continue to denote its base change to any algebraically closed field $\K$ by $V$. In turn, we mean $H^*(V;\mathbf{Q})$  to stand for both $H^*(V(\mathbb{C});\mathbb{Q})$ as well as $H_{\et}^*(V_{/\K}; \mathbb{Q}_{\ell})$. 

Furthermore, when $V$ is $\U^m_n$, its $\mathfrak{S}_n$-quotient $\S^m_n$ or $\Simp^m_n$ for some positive integer $m$, we will always assume that $n+1 <char \K$ whenever $char \K>0$. 
We fix a positive integer $m$ for the rest of the paper, and a positive integer $n$ that satisfy $n\geq 3m$.

\hfill$\square$\end{notation}
\begin{remark}\label{remark6.2}
	The assumption $n\geq 3m$ is required because it is a sufficient condition for Lemma \ref{A-admissibility}, which in turn is an important ingredient in the proof of Theorem \ref{thmB}. However, in the case when $m=1$, we have well-known answers for $n<3$ (compare with Remark \ref{rem1.3}). When $n=2$, we have $$D: \mathit{Simp}_2(\mathbb{C}) \to Conf_2(\mathbb{C})$$ is an isomorphism (where $Conf_2(\mathbb{C})$ denotes the unordered configuration space of two points in $\mathbb{C}$). Arnol'd's work (see e.g. \cite{Arnol'd1969}) answers completely the cohomology of $Conf_2(\mathbb{C})$. When $n=1$, the result is trivial because all morphisms are simply-branched.
\end{remark}
In this section, we construct a cohomology spectral sequence $E_{\bullet}^{\bullet,\bullet}$ that converges to $H^*(\U^m_n;\Q)$. To obtain $H^*(\S^m_n;\Q) \cong \big(H^*(\U^m_n;\Q)\big)^{\mathfrak{S}_n}$, we take the $\mathfrak{S}_n$ invariants of $E_{\bullet}^{\bullet,\bullet}$ and show that the resulting spectral sequence, which converges to $H^*(\Simp^m_n;\Q)$, degenerates on the $E_1$ page. First, we start with the following lemma.
\begin{lemma}\label{lem5.1}
Let $\K$ be an algebraically closed field. Let $m\geq 1$, and $n\geq 3$, satisfy $n+1<char \K$ whenever $char \K>0$. The complex $\mathit{A}^{\bullet}$ given by
\begin{equation}
\begin{gathered}\label{resol}\begin{split}
\mathbf{Q}_{\X_n} \to \bigoplus_{\substack{l^{m}(\lambda) = 1,\\ \lambda \in \P^m_n}} (i_{\lambda})^{*}\mathbf{Q}_{X(\lambda)} \to \bigoplus_{\substack{l^{m}(\lambda) = p,\\ \lambda \in \P^m_n}}\widetilde{H}^0 (\widehat{0},\lambda) \otimes(i_{\lambda})^{*} \mathbf{Q}_{X(\lambda)}    \to \ldots \\ \to \bigoplus_{\substack{l^{m}(\lambda) = p,\\ \lambda \in \P^m_n}} {\widetilde{H}}^{p-2} (\widehat{0},\lambda) \otimes (i_{\lambda})^{*}\mathbf{Q}_{X(\lambda)} \to \ldots
\end{split}\end{gathered}
\end{equation}
is quasi-isomorphic to $j_{!}\mathbf{Q}_{\U^m_n} $, where $j$ denotes the inclusion of the open stratum $\mathcal{U}^m_n \hookrightarrow \X_n$ and for each $\lambda\in \P^m_n$, the map $i_{\lambda}: X(\lambda) \hookrightarrow \X_n$ is an inclusion of closed strata.
\end{lemma}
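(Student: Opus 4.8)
The plan is to build the complex $A^\bullet$ as an explicit resolution stratum-by-stratum, and to verify the claimed quasi-isomorphism by checking it on stalks at a point of an arbitrary stratum $S(\lambda_0)$, using the shellability of $\P^m_n$ (Corollary~\ref{semimodular}) to kill the higher cohomology of order complexes.

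First I would set up the combinatorics. The complex in \eqref{resol} is, degree by degree, a direct sum over $\lambda \in \P^m_n$ with $l^m(\lambda) = p$ of $\widetilde{H}^{p-2}(\widehat 0, \lambda) \otimes (i_\lambda)_* \mathbf Q_{X(\lambda)}$ (I read the $(i_\lambda)^*$ in the statement as pushforward of the constant sheaf from the closed stratum, i.e.\ $(i_\lambda)_* \mathbf Q_{X(\lambda)}$, since $i_\lambda$ is a closed immersion and we want a complex of sheaves on $\X_n$). The differentials are assembled from the poset coboundary maps $\delta_j$ of Section~\ref{Section3.1} tensored with the restriction maps $\mathbf Q_{X(\lambda)} \to \mathbf Q_{X(\mu)}$ for $\lambda < \mu$ (which make sense because $\overline{S(\lambda)} \supseteq S(\mu)$, so $X(\mu) \subseteq X(\lambda)$). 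One checks $d^2 = 0$ from $\delta \circ \delta = 0$ in the chain complex computing $\widetilde H^*(\widehat 0, \lambda)$ together with compatibility of restriction maps; this is formal.

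Next, the core computation: fix $x_0 \in \X_n$ lying in the stratum $S(\lambda_0)$, so $\lambda_0 = \rho^{(m)}(x_0)$, and compute the stalk $A^\bullet_{x_0}$. A summand $\widetilde H^{p-2}(\widehat 0, \lambda) \otimes (i_\lambda)_* \mathbf Q_{X(\lambda)}$ contributes at $x_0$ precisely when $x_0 \in X(\lambda) = \overline{S(\lambda)}$, i.e.\ when $\lambda \le \lambda_0$ in $\P^m_n$. Hence $A^\bullet_{x_0}$ is the complex $\bigoplus_{\lambda \le \lambda_0}\widetilde H^{l^m(\lambda)-2}(\widehat 0,\lambda)$ with the poset differential --- this is exactly (a shift of) the cochain complex $\widetilde C^\bullet(\widehat 0, \widehat 0')$ of the closed interval $[\widehat 0, \lambda_0]$, reorganized via the standard "$\widetilde H$ of subintervals" decomposition. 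When $\lambda_0 = \widehat 0$ the complex is just $\mathbf Q$ in degree $0$, matching $(j_! \mathbf Q_{\U^m_n})_{x_0} = \mathbf Q$ since $x_0 \in \U^m_n$. When $\lambda_0 > \widehat 0$ we must show the stalk complex is acyclic, matching $(j_! \mathbf Q)_{x_0} = 0$. This acyclicity is the homological content: it is the statement that the "whole chain complex" $\widetilde C^\bullet(\widehat 0, \lambda_0)$, re-expressed through the cohomology of its open subintervals, is exact --- equivalently that $\Delta(\widehat 0, \lambda_0)$ together with the cone structure is acyclic in the relevant range, which is a standard consequence of shellability of $[\widehat 0, \lambda_0]$ (Proposition~\ref{LocalSemiMod} / Corollary~\ref{semimodular}): a shellable interval's order complex is Cohen--Macaulay, and the spectral sequence built from the subinterval filtration degenerates to give the claimed resolution. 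I would cite the "EL-labeling / Quillen fiber" machinery (e.g.\ Wachs~\cite{Wachs2006}, or the original Bj\"orner argument) for the precise statement that a bounded poset whose every interval is Cohen--Macaulay admits such a resolution of $j_!\mathbf Q$ on the associated stratified space.

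\textbf{The main obstacle} is the bookkeeping identifying the stalk complex $A^\bullet_{x_0}$ with the reduced cochain complex of the interval $[\widehat 0, \lambda_0]$, i.e.\ tracking the degree shifts and signs in the double complex whose rows are the $\widetilde H^\bullet(\widehat 0, \lambda)$ and whose columns are indexed by chains, and then invoking shellability to collapse it. The geometric input (that $X(\lambda)$ are honest closed subvarieties with $X(\mu) \subseteq X(\lambda)$ when $\lambda \le \lambda_0$, and that the restriction maps are the obvious ones) is straightforward from Section~\ref{Section4}; the subtlety is purely in the poset-homological algebra, and the cleanest route is to recognize \eqref{resol} as the complex computing the local cohomology of $\mathbf Q_{\X_n}$ along the stratification and then quote shellability. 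I expect no difficulty in characteristic $p$ versus $0$: every sheaf and map here is defined over $\mathbb Z$, and the argument is identical for $\mathbf Q$-coefficients (Betti) and $\mathbf Q_\ell$-coefficients (\'etale), under the running hypothesis $n+1 < \operatorname{char}\K$.
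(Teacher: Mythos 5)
Your proposal is correct in outline and uses the same two pillars as the paper — stalk-wise verification and shellability of $\P^m_n$ — but the order of operations differs. The paper first introduces (following Petersen's \cite[Section~3]{Petersen2017}) the auxiliary complex $\mathcal F^\bullet$ with $\mathcal F^p=\bigoplus_{l^m(\lambda)\geq p}\widetilde C^{p-2}(\widehat 0,\lambda)\otimes(i_\lambda)_*\mathbf Q_{X(\lambda)}$, i.e.\ built from the cochains of the intervals rather than their cohomology. Showing $\mathcal F^\bullet \simeq j_!\mathbf Q_{\U^m_n}$ is then a formal inclusion-exclusion statement that does not need shellability; shellability (Corollary~\ref{semimodular}) enters only in the last step, when the length filtration on $\mathcal F^\bullet$ is collapsed to the complex \eqref{resol} because each $\widetilde H^\bullet(\widehat 0,\lambda)$ is concentrated in the single degree $l^m(\lambda)-2$. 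You instead work with \eqref{resol} directly and check its stalks. That is a legitimate route, but it front-loads the harder bookkeeping: your description of the differentials as ``$\delta_j$ tensored with restrictions'' is not quite right — $\delta_j$ acts on chains, not on cohomology classes, so the differentials of $A^\bullet$ have to be the $d_1$ of the length-filtration spectral sequence of the $\widetilde C^\bullet(\widehat 0,\lambda)$'s (equivalently the Whitney-homology connecting maps); and the acyclicity of the stalk complex at $x_0\in S(\lambda_0)$, $\lambda_0>\widehat 0$, is not literally ``the cochain complex of a contractible poset'' but rather the $E_1$ page of that filtration, whose exactness uses \emph{both} degeneration from Cohen--Macaulayness \emph{and} the vanishing of $\widetilde H^*$ of the doubly-coned interval. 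The two-step route via $\mathcal F^\bullet$ decouples these issues, which is what buys the paper its very short proof. You are also right that the $(i_\lambda)^*$ in the statement should be read as the closed pushforward $(i_\lambda)_*$.
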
	

\begin{proof}
Following ~\cite[Section 3]{Petersen2017}, let $\mathcal{F}^{\bullet}$ be the complex of sheaves on $\X_n$ defined by $$\mathcal{F}^{p} =  \bigoplus_{\substack{l^m(\lambda) \geq p,\\ \lambda \in \P^m_n}} {\widetilde{C}}^{p-2} (\widehat{0},\lambda) \otimes (i_{\lambda})^{*} \mathbf{Q}_{X(\lambda)} $$
where ${\widetilde{C}}^{p-2} (\widehat{0},\lambda)$ is as defined in \Cref{Section3.1}.
That $\mathcal{F}^{\bullet}$ gives a resolution of $j_{!}\mathbf{Q}_{\U^m_n} $ follows from ~\cite{Petersen2017}, or more simply, just by using the inclusion-exclusion principle. Finally, note that $\mathcal{F}^{\bullet}$ carries a filtration by the length of elements in $\P^m_n$, which in turn gives a quasi-isomorphism to \eqref{resol} once we incorporate Proposition \ref{semimodular}. 

\hfill $\square$
\end{proof}

 \noindent We now prove Theorem \ref{thmB} and \ref{corB}.
 
\begin{proof}  We fix a positive integer $n\geq 3$. Let $m$ be a positive integer that satisfy $n\geq 3m$. 
  	The variety $\Simp^m_n$ is a Zariski dense open subset of $\M \cong \A^n$, and hence connected. So, $H^0(\Simp^m_n;\mathbf{Q}) \cong \mathbf{Q}$.
Now, continuing with the resolution in  \eqref{resol}, we construct a second quadrant double complex $K^{\bullet, \bullet}$ by taking the global Verdier dual of the complex in \eqref{resol}. If $\mathbf{Q}_{\X_n} \hookrightarrow I^{\bullet}$ is an injective resolution of $\mathbf{Q}_{\X_n}$-modules, then  $$K^{\bullet,\bullet} = RHom(\mathit{A}^{\bullet}, \mathbf{Q}_{\X_n})$$ where $$K^{-p,q} = Hom(\mathit{A}^{p}, I^q).$$  For each $p$, take the naive filtration $\tau_{\geq q}$ on $K^{-p,\bullet}$ via $$\Big(\tau_{\geq q}\big( K^{-p,\bullet} \big)\Big)^i= \Bigg\{ \begin{array}{lr}
0 & \text{for } i<q, \\
K^{-p,q}& \text{for } i\geq q.
\end{array}	$$ Thus, we obtain a spectral sequence which reads as $$E_1^{-p,q} = Ext^q(\mathit{A}^p, \mathbf{Q}_{\X_n}) \implies Ext^q(j_{!}\mathbf{Q}_{\U^m_n}, \mathbf{Q}_{\X_n})\cong H^q(\U^m_n; \mathbf{Q})$$
The last isomorphism above is implied by the fact that $(j_!, j^*)$ is an adjoint pair. Moreover, all morphisms considered in this paper are algebraic, so this is a spectral sequence of mixed Hodge structures. Now we take the $\mathfrak{S}_n$ invariants of each term on the $E_1$ page (again, the transfer map being algerbaic respects the mixed Hodge structures.) \begin{gather}
\Big( Ext^q(\mathit{A}^p, \mathbf{Q}_{\X_n})\Big)^{\mathfrak{S}_n} \nonumber\\= \bigg(\bigoplus\limits_{l^m(\lambda) = p} {\widetilde{H}}_{p-2} (\widehat{0},\lambda) \otimes Ext\Big((i_{\lambda})^{*}\mathbf{Q}_{X(\lambda)}, \mathbf{Q}_{\X_n}\Big) \bigg)^{\mathfrak{S}_n} \label{isom20}\\ \cong \bigg(\bigoplus\limits_{l^m(\lambda) = p} {\widetilde{H}}_{p-2} (\widehat{0},\lambda) \otimes H^q(\X_n, \X_n - X(\lambda))\bigg)^{\mathfrak{S}_n}\label{isom21}
\end{gather} 
The isomorphism between \eqref{isom20} and \eqref{isom21} follows from the fact $$Ext^q\Big((i_{\lambda})^{*}\mathbf{Q}_{X(\lambda)}, \mathbf{Q}_{\X_n}\Big) \cong  H^q(\X_n, \X_n - X(\lambda);\mathbf{Q})$$ because of the distinguished triangle: \[
\begin{tikzcd}
& Rj_!j^*  \arrow{dr} \\
 Ri_{\lambda}^*i_{\lambda}^* \arrow{ur}{[1]}  \arrow{rr}&& id_{\X_n} 
\end{tikzcd}
\]
\noindent To study each term of the spectral sequence, we need to compute \begin{enumerate}[label=(\roman*)]
	\item   $H^q(\X_n, \X_n - X(\lambda);\mathbf{Q})$, and
	\item $ \Big(\widetilde{H}_{p-2} (\widehat{0},\lambda)\Big)^{\mathfrak{S}_n}$.
\end{enumerate}  
\noindent For (ii),	we first, we consider the case when $m=1$, and study the action of $\mathfrak{S}_{n}$ on $\widetilde{H}_{p-2} (\widehat{0},\lambda)$. This, in turn, is based on the action of $\mathfrak{S}_n$ on $\Pi_n$, the partition lattice on $\{1,2\ldots, n\}$ which is completely known and well-documented in \cite{Wachs2006}. We  show that  \begin{gather}
\Big(\widetilde{H}_{p-2} (\widehat{0},\lambda)\Big)^{\mathfrak{S}_n} =0    \text{ for all } \lambda \in \P_n, l(\lambda) \geq 2. \label{length2vanish}
\end{gather} Suppose there exists $0\neq \omega \in \Big(\widetilde{H}_{p-2} (\widehat{0},\lambda)\Big)^{\mathfrak{S}_n}$, i.e. $\omega$ is a $\mathbf{Q}$-linear combination of $(p+1)$-chains starting at $\widehat{0}$ and ending at $\lambda$, that is invariant under the action of $\mathfrak{S}_{n}$. Recall that $\P_n\subset \Pi_n\times \Pi_n$. Let  $proj_i$ denote the projection of $\P_n$ to the $i^{th}$ copy of $\Pi_n$, for $i=1,2$. 
A simple, but crucial observation is that, if $\length_{\Pi_n}$ denotes the length function on $\Pi_n$, then $\length_{\Pi_n}(proj_2(\lambda)) = l(\lambda)$. In fact, using the definitions and notation set up in \eqref{rho} and \eqref{FR}, one has $$\length_{\Pi_n}(proj_1(\lambda)) = \sum_{i,j}(e^i_j-2)$$ and $$\length_{\Pi_n}(proj_2(\lambda)) = \sum\limits_{1\leq j \leq r} \Big(\sum\limits_{1\leq i\leq k_j}(e^i_j-1) -1\Big)  =l(\lambda) = p.$$
Therefore, $proj_2(\omega)$ is a nonzero $\mathfrak{S}_n$-invariant element in $\widetilde{H}_{p-2} (proj_2(\widehat{0}),proj_2(\lambda))$, where $proj_2(\widehat{0})$ is the $\widehat{0}_{\Pi_n}$ of $\Pi_n$,  i.e. the minimal element of the geometric lattice $\Pi_n$. But this contradicts the well-known fact that $\Big(\widetilde{H}_{p-2} (\widehat{0}_{\Pi_n},\lambda')\Big)^{\mathfrak{S}_n} =0$ for all $\lambda' \in \Pi_n$ of length $p$, and in particular, for $\lambda' = proj_2(\lambda)$. Following the proof of \eqref{length2vanish}, one has, for  $m\geq 2$, \begin{gather}
\Big(\widetilde{H}_{p-2} (\widehat{0},\lambda)\Big)^{\mathfrak{S}_n} =0 \text{ for all } \lambda \in \P^m_n, l^{m}(\lambda) \geq 2. \label{E1q}
\end{gather} 
The discussion on (ii) above implies that $E_1^{-p,q} =0$ for all $q$ whenever $p\geq 2$. When $p=1$, for each $\lambda \in \P_n$ of length $m+1$, which is equivalent to saying $\lambda \in \P^m_n$ of length $1$, we have \begin{gather}
E_1^{-1,q} = \bigoplus_{\substack{l^{m}(\lambda) = 1,\\ \lambda \in \P^m_n}}  H^q(\X_n, \X_n - X(\lambda);\mathbf{Q}).\end{gather}

\noindent For (i), note that for an arbitrary $\lambda \in \P_n$ we have already seen that $X(\lambda)$, in general, has singularities. But when $m$ is a positive integer and $n\geq 3m$, for all $\lambda \in \P_n$ satisfying $l(\lambda) =m$, Proposition \ref{VBness}, Lemma \ref{nonempty} and Lemma \ref{A-admissibility} imply that $X(\lambda)$ is non-empty and $$X(\lambda)/\mathfrak{S}_{N(\lambda)} \cong \A^{n-m}.$$ For the rest of the proof, we fix an integer $n$ that satisfies $n\geq 3m$ and $n< char \K -1$ whenever $char \K>0$. So now, taking $\mathfrak{S}_n$ invariants of \eqref{E1q}, one obtains: 

\begin{flalign} &\Big(E^{-1,q}\Big)^{\mathfrak{S}_n}  \nonumber \\ \nonumber \\ &=
\bigoplus_{\substack{\{\mu \in \mathfrak{P}: \\\length(\mu) = m,\\   \widetilde{\mu} \text{ a choice of a lift of } \mu\}}} H^q\Big(\X_n/{\mathfrak{S}_{\lvert N(\widetilde{\mu}) \rvert}} , \X_n/{\mathfrak{S}_{\lvert N(\widetilde{\mu}) \rvert}} - X(\widetilde{\mu})/{\mathfrak{S}_{\lvert N(\widetilde{\mu}) \rvert}};\mathbf{Q}\Big) \label{isom5.7}\\ \nonumber \\ & \cong \bigoplus_{\substack{\{\mu \in \mathfrak{P}: \\\length(\mu) = m,\\   \widetilde{\mu} \text{ a choice of a lift of } \mu\}}}  H^{q-2m}(X(\widetilde{\mu})/{\mathfrak{S}_{\lvert N(\widetilde{\mu}) \rvert}};\mathbf{Q}) \label{isom5.8}\\ \nonumber\\& \cong
\begin{cases}
0        & \text{if } q \neq 2m  \\
\mathbf{Q}(-m)^{\oplus\mathbf{c}(m)}        & \text{if } q=2m, 
\end{cases} \label{isom5.9}
\end{flalign}
where $\mathbf{c}(m) = \#\{\mu\in \mathfrak{P}: \length(\mu)  = m\}$, a positive integer defined in \eqref{c(m)}. For the last three steps above, note the following:  \begin{enumerate}
	\item  $\X_n/{\mathfrak{S}_{\lvert N(\widetilde{\mu}) \rvert}} \cong \A^n$; this is because $\X_n \cong \A^n$ and $\mathfrak{S}_{\lvert N(\widetilde{\mu}) \rvert}$ is a subgroup of $\mathfrak{S}_n$ that acts by permuting the coordinates.
	\item We know from Proposition \ref{VBness} that $X(\widetilde{\mu})/{\mathfrak{S}_{\lvert N(\widetilde{\mu}) \rvert}}$ is a smooth codimension $m$ closed subvariety in  $\X_n/{\mathfrak{S}_{\lvert N(\widetilde{\mu}) \rvert}}$ ,  so by the Gysin homomorphism (see e.g. \cite[Theorem 16.1]{MilneVersion2.21March222013}) we obtain \eqref{isom5.8} from \eqref{isom5.7}.
	\item By Proposition \ref{VBness}, $X(\widetilde{\mu})/{\mathfrak{S}_{\lvert N(\widetilde{\mu}) \rvert}}\cong \A^{n-m}$, which gives us \eqref{isom5.9} from \eqref{isom5.8}.
\end{enumerate}   This completes of the proof of Theorem \ref{thmB}. 
Finally, let $char\K=p$ and let $q=p^d$ for some positive integer $d$. The Grothendieck-Lefschetz trace formula now reads as  \begin{gather}
\#	\Simp^m_n(\F_q) = q^n \sum_{i} (-1)^i \Trace (\Frob_q:H^i(\Simp^m_n;\Q_{\ell})).\label{6.10}
\end{gather} By equation \eqref{isom5.8}, the right-hand-side of \eqref{6.10} equals $q^n-\mathbf{c}(m)q^{n-m}$, thus proving
Corollary \ref{corB}.

\hfill$\square$
\end{proof}

	\bibliographystyle{alpha}
\bibliography{SIMP}
		\end{document}